
\documentclass[a4paper,oneside,11pt]{article}

\usepackage{amsxtra,amssymb,amsmath,amsthm,amsbsy,enumerate}
\usepackage[all,2cell,v2]{xy}\UseTwocells\UseHalfTwocells
\usepackage{graphicx}



\usepackage[dvipsnames]{xcolor}


\setlength{\textwidth}{16cm}        \setlength{\topmargin}{0cm}
\setlength{\oddsidemargin}{0cm}        
\setlength{\textheight}{22cm}        \setlength{\parindent}{0.4cm}


\theoremstyle{definition}
\newtheorem*{thm}{Theorem}
\newtheorem{theorem}{Theorem}[section]
\newtheorem{lemma}[theorem]{Lemma}
\newtheorem{remark}[theorem]{Remark}
\newtheorem{corollary}[theorem]{Corollary}
\newtheorem{example}[theorem]{Example}
\newtheorem{proposition}[theorem]{Proposition}


\def\R{{\mathbb R}}
\def\U{{\mathcal U}}


\def\hol{{\rm Hol}}

\def\id{{\rm id}}


\def\xto{\xrightarrow}
\def\to{\rightarrow}

\def\toto{\rightrightarrows}
\def\xfrom{\xleftarrow}

\def\action{\curvearrowright}
\def\xto{\xrightarrow}
\def\to{\rightarrow}

\def\toto{\rightrightarrows}
\def\xfrom{\xleftarrow}

\def\action{\curvearrowright}

\def\r#1{|_{#1}}
\def\Z{\mathbb Z}

\newcommand{\n}[1]{\left\lVert#1\right\rVert}
\newcommand{\m}[1]{\left\lvert#1\right\rvert}
\newcommand{\p}[1]{\left\langle#1\right\rangle}


\def\o{\overline}


\title{\textbf{Geodesics on Riemannian stacks}}
\author{Matias del Hoyo \and Mateus de Melo}
\date{}


\begin{document}

\maketitle

\begin{abstract}
Metrics on Lie groupoids and differentiable stacks have been introduced recently, extending the Riemannian geometry of manifolds and orbifolds to more general singular spaces. 
Here we continue that theory, studying stacky curves on Riemannian stacks, measuring their length using stacky metrics, and introducing stacky geodesics. Our main results show that the length of stacky curves measure distances on the orbit space, 
characterize stacky geodesics as locally minimizing curves, and 
establish a stacky version of Hopf-Rinow Theorem.
We include a concise overview that bypasses nonessential technicalities, and we lay stress on the examples of orbit spaces of isometric actions and leaf spaces of Riemannian foliations.  
\end{abstract}


\setcounter{tocdepth}{1}
\tableofcontents

\section{Introduction}


Differentiable manifolds play a central role in nowadays mathematics,
serving as models for spaces in geometry, topology, analysis and mathematical physics. While manifolds are homogeneous, some situations demand to deal with spaces with singularities, and to perform differential geometry over them. A framework that generalizes the notion of manifold and which has received much attention lately is that of Lie groupoids and differentiable stacks. 


Lie groupoids are a categorification of the notion of manifold. They permit a unified treatment to classical geometries such as actions, fibrations and foliations \cite{mkbook,mmbook}. Also, they serve as geometric models for noncommutative algebras, and play a role in desingularizing Poisson and Dirac structures \cite{dswbook}. 
Morita equivalences of Lie groupoids provide a working definition for differentiable stacks, avoiding the categorical apparatus of the original definition.


Stacks are sheaves of groupoids, originally introduced in algebraic geometry, which make sense in very general contexts, and have shown to be useful to model singular quotients and moduli spaces. Differentiable stacks \cite{bx,dh}, the incarnation of that theory in differential geometry,  include manifolds and orbifolds as particular examples, and more general singular spaces such as orbit spaces of actions and leaf spaces of foliations. These are our main examples.


While several tensors admit simple groupoid versions, such as symplectic forms and vector fields \cite{mkbook}, the case of metrics turns out to be more subtle. After several attempts, a general theory for Riemannian groupoids was proposed in \cite{dhf1}, based on the notion of Riemannian submersion, and inspired in a simplicial approach to groupoids via their nerve. These metrics are Morita invariant, hence inducing a notion of metric on differentiable stacks \cite{dhf2}.


Proper groupoids have compact isotropies and Hausdorff orbit spaces, and they admit averaging systems, which can be used to construct Riemannian metrics. The stacks arising from proper groupoids are called separated.
Proper Lie groupoids are linearizable around their orbits by the exponential maps of a metric. It follows that separated stacks are locally modeled by linear representations of compact groups. 





\medskip



In this paper we study curves, and specially geodesics, on Riemannian stacks. 
This general framework allows us to unify and generalize aspects of previous works on the Riemannian geometry of orbit spaces \cite{aalm,gl} and leaf spaces \cite{abt,salem}. 
We establish stacky version of some fundamental results for Riemannian manifolds and orbifolds, and explain why some other cease to hold in the new broad context. 


Following \cite{dhf1,dhf2}, our object of study is a Lie groupoid $G\toto M$ equipped with compatible metrics $\eta^{(0)},\eta^{(1)},\eta^{(2)}$ on objects, arrows and pair of composable arrows. We will recall the precise definition in next section.
The induced distance in $M$ yields a pseudo-distance $d_N$ in the coarse orbit space $M/G$, which was studied in \cite{ppt}. 
Here we think of $(G\toto M,\eta)$ as a device to perform Riemannian geometry on the distance space $(M/G,d_N)$.


Given $G\toto M$ a Lie groupoid, we denote by $[M/G]$ its orbit stack, and we model a {\bf stacky curve} $\alpha:I\to[M/G]$ by a cocycle $(a_{ji})$, which
is roughly a sequence of curves $a_{ii}$ in $M$ linked on the extremes by arrows $a_{ji}$ in $G$. More precisely, a cocycle is a groupoid map from the groupoid arising from a cover of the interval. We will make these definition precise, interpret them in the fundamental examples, and relate them with ad hoc definitions in the literature, as  \cite[3.3]{mm2}.
$$(a_{ji}):(\coprod U_{ji}\toto\coprod U_i)\to (G\toto M)$$


Given $x\in M$, the orbit $O_x$ represents a point on the stack $[M/G]$. We use the isotropy representation $G_x\action N_xO_x=T_xM/TO_x$ to model the tangent space to $[M/G]$ at $O_x$.  
A groupoid metric $\eta$ on $G\toto M$ yields an equivariant inner product on  $N_xO_x$, and we can define the {\bf normal norm} $\n{v}_N$ of any vector $v\in T_xM$.
If $\alpha:I\to[M/G]$ is a stacky curve represented by a good cocycle $(a_{ji})$, namely one defined over a dimension 1 cover, then the {\bf length} of $\alpha$ is
$$\ell(\alpha)=
\int_{I} \n{\alpha'(t)} dt=
\sum_{i}\int_{U_i}\n{a'_{i}(t)}_Ndt - 
\sum_{i}\int_{U_{i+1,i}}\n{a'_{i+1,i}(t)}_Ndt.$$

Our first main theorem shows that the length of stacky curves recovers the normal pseudo-distance $d_N$ on $M/G$, which was studied in \cite{ppt} and that we will review in section 3.3.

\begin{thm}[\ref{thm:distance-length}]
If $(G\toto M,\eta)$ is a Riemannian groupoid and $M/G$ is connected, 
then $d_N([x],[y])$ is the infimum of the lengths $\ell(\alpha)$ of stacky curves $\alpha:I\to[M/G]$ connecting $[x]$ and $[y]$.
\end{thm}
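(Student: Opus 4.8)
The plan is to route both infima through the description of the normal pseudo-distance downstairs in $M$. Recall from section~3.3, following \cite{ppt}, that $d_N([x],[y])$ is the length pseudo-distance generated by the normal norm $\n{\cdot}_N$: it is the infimum of the normal lengths $\int\n{\gamma'(t)}_N\,dt$ over admissible chains joining the orbits $O_x$ and $O_y$, where an admissible chain is a finite concatenation of piecewise smooth curves in $M$ whose successive endpoints lie in a common $G$-orbit. Connectedness of $M/G$ makes such chains available, so both infima in the statement are taken over nonempty families. Everything then reduces to a dictionary between stacky curves and admissible chains under which $\ell(\alpha)$ is matched, up to an arbitrarily small error, with the normal length of a chain.

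The core of this dictionary concerns good cocycles whose overlaps $U_{i+1,i}$ are thin. To such a cocycle I associate the admissible chain obtained by concatenating the base curves $a_i$ and inserting, at each overlap, the orbit-jump encoded by the arrow $a_{i+1,i}$; conversely, given a chain I spread each of its orbit-jumps over a thin overlap into a smooth arrow-curve, using a local section of the Riemannian submersion $s\colon G\to M$ to lift the jumping arrow and setting $a_{i+1}=t\circ a_{i+1,i}$. In both passages the orbit-jumps contribute zero normal length, and over a thin overlap of width $\delta$ the correction term $\int_{U_{i+1,i}}\n{a'_{i+1,i}(t)}_N\,dt$ and the doubly-counted base pieces are each of size $O(\delta)$; hence $\ell(\alpha)$ and the normal length of the associated chain agree up to $O(\delta)$. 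Since $\ell$ is cocycle invariant, a general good cocycle may first be replaced by a thin-overlap representative of the same stacky curve without changing $\ell(\alpha)$.

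Granting the dictionary, both inequalities follow. For $d_N([x],[y])\le\inf_\alpha\ell(\alpha)$, a stacky curve $\alpha$ yields, for every $\delta>0$, an admissible chain from $O_x$ to $O_y$ of normal length at most $\ell(\alpha)+O(\delta)$, so $d_N([x],[y])\le\ell(\alpha)$ after letting $\delta\to0$, and then taking the infimum over $\alpha$. For $\inf_\alpha\ell(\alpha)\le d_N([x],[y])$, a chain of normal length below $d_N([x],[y])+\epsilon$ is realized by a good cocycle $\alpha$ with $\ell(\alpha)\le d_N([x],[y])+\epsilon+O(\delta)$, and again $\delta,\epsilon\to0$. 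The main obstacle is proving that the correction terms really do match $\ell(\alpha)$ with the chain's normal length, and this is exactly where the Riemannian structure is used: because $s$ and $t$ are Riemannian submersions and $\eta$ is compatible along composable arrows, each arrow acts by an isometry between normal spaces, so $\n{\cdot}_N$ descends to the stack and is insensitive to orbit-jumps; differentiating the identities $s\circ a_{i+1,i}=a_i$ and $t\circ a_{i+1,i}=a_{i+1}$ and projecting to the normal bundle then pins down the relation among $\n{a'_i}_N$, $\n{a'_{i+1}}_N$ and $\n{a'_{i+1,i}}_N$ that makes the alternating length formula collapse to the reduced normal length. The subsidiary technical point is that the smooth lifting of a jump through a local section of $s$ costs only $O(\delta)$ in normal length, which is guaranteed by properness of the groupoid.
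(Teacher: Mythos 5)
There is a genuine gap, and it sits at the very first step. You begin by ``recalling'' that $d_N([x],[y])$ is the infimum of \emph{normal} lengths $\int\n{\gamma'(t)}_N\,dt$ over chains of curves whose successive endpoints lie in a common orbit. That is not the definition used in the paper (nor in \cite{ppt}): there, $d_N$ is the quotient pseudo-distance built from chains $(x_0,\dots,x_{2n+1})$ in which orbit jumps are free and each pair $x_{2i-1},x_{2i}$ contributes its \emph{full} Riemannian distance $d(x_{2i-1},x_{2i})$ in $(M,\eta^{(0)})$. The claim that the full-distance quotient pseudo-metric coincides with an infimum of normal lengths is essentially the content of Theorem \ref{thm:distance-length} itself; assuming it as the starting point begs the question. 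Your ``dictionary'' between good cocycles and chains only matches normal length with normal length, so it never confronts the quantity $d_N$ actually being computed.

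Concretely, the direction that breaks is $d_N([x],[y])\le\ell(\alpha)$. Since $\n{v}_N\le\n{v}$ can be strict, and arbitrarily lopsided, a bound on the normal length of a curve gives no direct control on the full distances entering the definition of $d_N$. The paper closes this hole with two ingredients absent from your proposal: (i) the continuity of the normal speed (Proposition \ref{prop:cont-speed}), and (ii) a local replacement of a representing curve $a$ by an isomorphic curve through a transversal whose full speed equals its normal speed at a chosen point, whence $\n{a'(t)}\le(1+\epsilon)\n{a'(t)}_N$ nearby; a compactness argument and the triangle inequality for $d_N$ over a finite subdivision then give $d_N([x],[y])\le(1+\epsilon)\ell(\alpha)$. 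Your other direction, $\inf_\alpha\ell(\alpha)\le d_N([x],[y])$, is repairable along the lines you sketch and is in fact close to the paper's construction: choose curves realizing $d(x_{2i-1},x_{2i})$ up to $\epsilon/2n$, lift along the source map to create the transition arrows $a_{i+1,i}$, and use $\n{\cdot}_N\le\n{\cdot}$ to bound $\ell(\alpha)$ by the sum of full lengths. A minor further point: you invoke properness for the lifting step, but the theorem assumes only a Riemannian groupoid, and no properness is needed to lift a curve locally along the Riemannian submersion $s:G\to M$.
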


Two groupoid metrics $\eta_1$ and $\eta_2$ on $G\toto M$ are said to be equivalent if they induce the same inner product on the normal directions $N_xO$ for every $x$. Equivalence classes of metrics are a Morita invariant, hence inducing a notion of metric on the orbit stack (cf. \cite{dhf2}). As immediate corollaries of Theorem \ref{thm:distance-length} we see that the distance $d_N$ on $M/G$ depends only on the class of the metric, and that $d_N$ is preserved by Riemannian Morita equivalences.


Then we introduce a working definition for geodesics on stacks. Given $(G\toto M,\eta)$ a Riemannian groupoid, we say that a stacky curve $\alpha:I\to[M/G]$ is a {\bf geodesic} if it can be presented by a cocycle $(a_{ji})$ on which the curves $a_i=a_{ii}:U_i\to M$ and $a_{ji}:U_{ji}\to G$ are geodesics orthogonal to the orbit foliations. We interpret this definition on the fundamental examples, and establish existence, uniqueness and other basic properties.


For proper Riemannian groupoids, we manage to prove a stacky version of Gauss lemma, asserting that if $v\in T_pM$ is small then $d_N([\exp(v)],[x])=\n{v}_N$. From this, we derive our second theorem, characterizing stacky geodesics as locally minimizing curves. We say that a stacky curve $\alpha:I\to[M/G]$ is {\bf minimizing} at $t_0\in I$ if the length of $\alpha$ from $t_0$ to $t$ equals the normal distance between $\alpha(t)$ and $\alpha(t_0)$ for every $t$ near enough $t_0$.

\begin{thm}[\ref{thm:geodesics-distance}]
Given $(G\toto M,\eta)$ a proper Riemannian groupoid and $\alpha:I\to[M/G]$ a stacky curve, then $\alpha$ is a geodesic if and only if it is minimizing at every $t_0\in I$.
\end{thm}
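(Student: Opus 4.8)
The plan is to prove the two implications separately, in both cases reducing to a local analysis around a single parameter value $t_0$ and exploiting the local linear model provided by properness. By linearizability, near the orbit $O_p$ of a point $p$ representing $\alpha(t_0)$ the groupoid is Morita equivalent to the action groupoid $G_p\ltimes N_pO_p$ of the isotropy representation, and since the normal distance $d_N$ and the equivalence class of $\eta$ are Morita invariant (a consequence of Theorem \ref{thm:distance-length}), all computations can be transported to this model, where the stacky Gauss lemma $d_N([\exp_p(v)],[p])=\n{v}_N$ is available for small normal vectors $v$.

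For the implication that geodesics are minimizing, I would present $\alpha$ near $t_0$ inside a single chart, so that $\ell(\alpha|_{[t_0,t]})=\int_{t_0}^t\n{a'(s)}_N\,ds$ with $a=a_i$ a geodesic of $M$ orthogonal to the orbits. Orthogonality gives $\n{a'(s)}_N=\n{a'(s)}$, so this length is just the Riemannian length of the segment $a|_{[t_0,t]}$, namely $\n{v}$ with $v=(t-t_0)a'(t_0)$ a normal vector and $a(t)=\exp_{a(t_0)}((t-t_0)a'(t_0))$. The Gauss lemma then yields $d_N(\alpha(t_0),\alpha(t))=\n{v}_N=\n{v}=\ell(\alpha|_{[t_0,t]})$; combined with the general inequality $\ell\ge d_N$ coming from Theorem \ref{thm:distance-length}, this shows $\alpha$ is minimizing at $t_0$, and $t_0$ was arbitrary.

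For the converse I would normalize $\alpha$ to unit normal speed and fix $t_0$, with $p$ representing $\alpha(t_0)$. Working in the local model, for $t$ close to $t_0$ every point $\alpha(t)$ has the form $[\exp_p(v_t)]$ for a normal vector $v_t$ unique up to the isotropy $G_p$, and minimality together with the Gauss lemma forces $\n{v_t}_N=d_N(\alpha(t_0),\alpha(t))=t-t_0$. The heart of the argument is a uniqueness statement upgrading the Gauss lemma: the radial horizontal geodesic $s\mapsto[\exp_p(sw)]$ should be, up to the groupoid action and reparametrization, the only stacky curve from $[p]$ to $[\exp_p(w)]$ whose length realizes $\n{w}_N$. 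Granting this, the minimizing segment $\alpha|_{[t_0,t]}$ must coincide with such a radial geodesic, so $v_t=(t-t_0)w$ for a fixed normal $w$, and near $t_0$ the curve $\alpha$ is presented by the horizontal geodesic $a(s)=\exp_p((s-t_0)w)$ of $M$. Covering $I$ by such intervals and recording on the overlaps the arrows of $G$ relating consecutive lifts, I would assemble a good cocycle all of whose components are geodesics orthogonal to the orbit foliations, exhibiting $\alpha$ as a stacky geodesic.

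The main obstacle is precisely this uniqueness of local minimizers in the singular orbit space, the point where the manifold proof simply invokes injectivity of the exponential on a normal ball. When $\alpha$ meets a lower stratum the submersion picture $N_pO_p\to N_pO_p/G_p$ degenerates, and one must rule out minimizing curves that branch along or bounce off the singular set; I expect this to require a careful study of horizontal geodesics in the linear model and their projections, together with convexity of the quotient distance on small balls. A secondary technical point is the assembly step: one must verify that the transition arrows of the resulting cocycle can themselves be chosen to be geodesics orthogonal to the orbit foliation of $G$, so that the cocycle genuinely meets the definition of a stacky geodesic and not merely one whose object-components are geodesic.
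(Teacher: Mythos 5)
Your first implication (geodesic $\Rightarrow$ minimizing at every point) is correct and coincides with the paper's argument: restrict to a chart where $\alpha$ is a normal geodesic $a$ of $M$, use orthogonality to get $\n{a'}_N=\n{a'}$, and apply the stacky Gauss lemma (Proposition \ref{prop:gauss-lemma}) at $x=a(t_0)$. The genuine gap is in the converse, and you have located it yourself: everything is made to rest on the claim that the radial curve $s\mapsto[\exp_p(sw)]$ is, up to the groupoid action and reparametrization, the \emph{only} stacky curve from $[p]$ to $[\exp_p(w)]$ of length $\n{w}_N$, and this claim is never proved --- you only indicate that you ``expect'' it to follow from a study of horizontal geodesics and convexity of small balls. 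That claim is the entire content of the converse, and it is genuinely delicate: even in the flat model $S^1\action\R^2$, a spiral $t\mapsto (t\cos\theta(t),t\sin\theta(t))$ with $\theta$ smooth and nonconstant has unit normal speed and realizes the quotient distance between any two of its points, yet it is not a geodesic of $\R^2$; it agrees with the radial geodesic only after applying the \emph{time-dependent} gauge $g(t)\in S^1$ rotating by $-\theta(t)$. So any such uniqueness statement can only hold ``up to a smooth family of arrows'', and producing that family smoothly across the singular point (where the gauge is far from unique) is precisely the work your proposal defers. As written, the converse is a reduction to a conjecture, not a proof.

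The paper's converse avoids classifying minimizers in the singular quotient altogether. Working in the linearization $G_x\ltimes B^{N}_{\epsilon}(x)\toto B^{N}_{\epsilon}(x)$ at $x=a(t_0)$, the proof of Proposition \ref{prop:gauss-lemma} yields the stronger identity $d_N([x],[y])=d(x,y)$ for every $y$ in the model: the normal distance \emph{from the center} equals the Riemannian distance on objects. Consequently, ``$\alpha$ is minimizing at $t_0$ for $d_N$'' transfers to the statement $d(x,a(t))=\m{t-t_0}$ about the representative $a$ inside the Riemannian manifold $B^{N}_{\epsilon}(x)$, where classical Riemannian geometry identifies $a$ (up to the gauge subtlety above) with a normal geodesic; the locality Lemma \ref{lemma:local-geodesic} then upgrades ``geodesic near each $t_0$'' to ``stacky geodesic on $I$''. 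Note also that your secondary worry about the transition arrows of the assembled cocycle is already settled in the paper: the key step of Lemma \ref{lemma:local-geodesic} (used again in Corollary \ref{cor:merge-geodesics}) produces the arrows as normal geodesics in $G$ by horizontally lifting the initial velocity along the source map, so no extra argument is needed there. If you wish to salvage your route, the quickest repair is to replace the global uniqueness-of-minimizers lemma by this distance-transfer argument, which uses only what is already established inside the proof of the Gauss lemma.
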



Note that a stacky geodesic does not minimize distance between any pair of nearby points, even in the orbifold case. For instance, on the plane modulo a reflection, a straight line does not minimize the distance between points on different side of the axis. 
We will see in Proposition \ref{prop:stratum} that if in a separated stack a geodesic minimizes the distance between two given points then the intermediate points can not belong to a more singular stratum.

A remarkable corollary of Theorem \ref{thm:geodesics-distance} is that, when working with proper groupoids, equivalent metrics give rise to the same geodesics, for they only depend on the distance. Then geodesics are an intrinsic notion for  separated Riemannian stacks. We conjecture this is true for arbitrary Riemannian stacks.


Finally we study completeness. Given $(G\toto M,\eta)$ a proper Riemannian groupoid,  
one of the main theorems of \cite{ppt} shows that if the metric $\eta^{(0)}$ is geodesically complete then the distance $d_N$ on $M/G$ is complete. Our framework allows us to strengthen this result, achieving the following stacky version of Hopf-Rinow theorem. We say that a Riemannian stack is {\bf geodesically complete} if stacky geodesics can be extended to the whole real line.


\begin{thm}[\ref{thm:hopf-rinow}]
A separated Riemannian stack $([M/G],[\eta])$ is geodesically complete if and only if the coarse orbit space $(M/G,d_{N})$ is a complete metric space.
\end{thm}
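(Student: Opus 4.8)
The plan is to prove the two implications separately, reducing everything to the behavior of \emph{horizontal} geodesics in $M$, i.e. geodesics of $\eta^{(0)}$ orthogonal to the orbit foliation. The starting observation is that a stacky geodesic is, locally, presented by such a horizontal geodesic $a_i$ in $M$, and that a geodesic of $M$ which is orthogonal to an orbit at one instant stays orthogonal to every orbit it meets (the groupoid incarnation of O'Neill's horizontality result for Riemannian submersions); hence a stacky geodesic extends to $\R$ precisely when its horizontal lift, a genuine geodesic of $(M,\eta^{(0)})$, does. Throughout I will freely use Theorem \ref{thm:distance-length} (length computes $d_N$), Theorem \ref{thm:geodesics-distance} (stacky geodesics are the locally minimizing curves), the stacky Gauss lemma, and the fact that a separated stack is locally modeled, around each orbit $O_x$, by the linear isometric action $G_x\action N_xO_x$ of the compact isotropy. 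Since for proper groupoids geodesics are intrinsic to the class $[\eta]$, I am also free to replace $\eta$ by any convenient representative.

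For the implication ``geodesically complete $\Rightarrow$ $d_N$ complete'', I would first upgrade geodesic completeness to a stacky Hopf-Rinow joining statement: any point $[y]$ can be joined to a fixed $[x]$ by a minimizing stacky geodesic. This follows by transporting the classical argument to the base, running the standard ``the set of radii along which the geodesic stays minimizing is open, closed and nonempty'' continuity argument, where openness and the initial step come from the stacky Gauss lemma and Theorem \ref{thm:geodesics-distance}, and closedness from geodesic completeness. Given a Cauchy sequence $[x_n]$ in $(M/G,d_N)$, it is bounded, so all the $[x_n]$ lie in a closed metric ball about $[x_0]$; this ball is the image under the continuous stacky exponential of a compact ball of normal vectors at $x_0$, hence relatively compact. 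A Cauchy sequence in a relatively compact set converges, so $(M/G,d_N)$ is complete.

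For the converse, assume $(M/G,d_N)$ is complete and let $\alpha:[0,T)\to[M/G]$ be a maximal unit-speed stacky geodesic with $T<\infty$; I must extend it past $T$. Because length dominates distance, $d_N(\alpha(s),\alpha(t))\le\m{t-s}$, so $\alpha(t)$ is Cauchy as $t\nearrow T$ and converges to some point, represented by $\bar x\in M$. Working in the linear local model $G_{\bar x}\action N_{\bar x}O_{\bar x}$ about $\bar x$, I would lift the tail of $\alpha$ to a horizontal geodesic $a$ of $\eta^{(0)}$; using properness, that is compactness of $G_{\bar x}$, I extract a limit of the horizontal velocities $a'(t)$ as $t\nearrow T$, obtaining a horizontal vector at $\bar x$ well defined up to the isotropy action. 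Local existence and uniqueness of geodesics in the linear model then continue $\alpha$ beyond $T$, contradicting maximality; therefore $T=\infty$ and the stack is geodesically complete.

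The main obstacle is exactly this last continuation across the limit orbit $O_{\bar x}$, whose isotropy may jump up, so that $\bar x$ sits in a more singular stratum than the nearby points of $\alpha$. There the horizontal lift need not converge in $M$ on the nose, only modulo the $G_{\bar x}$-action, and one must verify that the limiting normal direction is genuinely horizontal and that the two one-sided geodesics glue into a single stacky geodesic through $O_{\bar x}$. Controlling this requires the linearization theorem together with the Gauss lemma to pin down the germ of $\alpha$ at $[\bar x]$, and it is the step where the separatedness hypothesis is essential; the stratification picture of Proposition \ref{prop:stratum} is what guarantees that the continuation is unambiguous.
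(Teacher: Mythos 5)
Your second implication (geodesic completeness implies completeness of $d_N$) is essentially the paper's own argument: first establish the joining statement of Proposition \ref{prop:geodesic-realizing}, then trap a Cauchy sequence inside the compact set $\mathcal E(B^N_R(x))$, the image of a ball of normal vectors under the continuous stacky exponential. That half is correct and matches the paper.

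The converse is where there is a genuine gap, precisely at the step you yourself flag as ``the main obstacle''. Your plan is to continue $\alpha$ \emph{through} the limit point $[\bar x]$: lift the tail of $\alpha$ into the linear model at $\bar x$, extract a limiting horizontal velocity at $\bar x$ from compactness of $G_{\bar x}$, and restart the geodesic there. But you never show that the lifted velocities converge, nor that the lift converges in $M$ rather than merely modulo the $G_{\bar x}$-action, and the tool you invoke to make the continuation ``unambiguous'', Proposition \ref{prop:stratum}, does not do this job: it says that a \emph{minimizing} geodesic stays in a single stratum at \emph{interior} times, and says nothing about whether or how a geodesic can be continued through a point where the isotropy jumps. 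As written, the crucial step of this direction remains an unproved claim. The paper's proof avoids the problem entirely by never extending from the limit point: it fixes a representative $x$ of the limit, uses the local geodesic flow of the \emph{manifold} $M$ to produce an open $U\ni x$ and $\epsilon>0$ such that every unit-speed geodesic starting in $U$ exists for time at least $\epsilon$, then picks $t_{n_0}$ with $t_{max}-t_{n_0}<\epsilon/2$ and $\alpha(t_{n_0})\in\pi(U)$, chooses a representative $y\in U$ and the normal vector $v\in T_yM$ representing $\alpha'(t_{n_0})$, and merges $\alpha$ with the classical geodesic through $(y,v)$ via Corollary \ref{cor:merge-geodesics}, contradicting maximality. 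No convergence of velocities and no passage through the singular orbit is needed; the limit point serves only to locate $U$. Relatedly, your opening claim that a stacky geodesic extends to $\R$ ``precisely when its horizontal lift, a genuine geodesic of $(M,\eta^{(0)})$, does'' is false globally: stacky geodesics only have local lifts glued by arrows, and the paper's final Remark exhibits separated stacks admitting complete stacky metrics although no representative $\eta^{(0)}$ on $M$ is complete, so this equivalence cannot serve as the backbone of either implication.
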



A nice corollary is that if the coarse orbit space $M/G$ is compact then $([M/G],[\eta])$ is geodesically complete. To relate our result with that of \cite{ppt}, note that if $(M,\eta^{(0)})$ is geodesically complete then $[M/G]$ also is, for the projection $M\to [M/G]$ is a stacky Riemannian submersion, in the sense that geodesics on $M$ orthogonal to the fibers project to geodesics on $[M/G]$. This enhances the claim that $M\to M/G$ is a submetry \cite{ppt}.


\medskip

Most of our results are stated for proper Riemannian groupoids, but they can easily be extended to Riemannian groupoids that are linearizable and have Hausdorff orbit spaces. In fact, these two conditions readily imply that the corresponding stack is locally a product of a separated stack, coming from the effective part of the normal representation, and the classifying stack of a group, the ineffective part, with no transverse information.

\medskip

{\bf Organization.} 
In $\S 2$ we provide a self-contained presentation for Lie groupoids, differentiable stacks and Riemannian structures over them. 
In $\S 3$ we study stacky curves and use them to measure distances.
In $\S 4$ we introduce geodesics and develop their theory.

\medskip

{\bf Acknowledgements.} 
The results are part of the second author's Ph.D. Thesis at IMPA, under the advice of H. Bursztyn. We thank him for his personal and academic guidance and support all along the process.
We also thank M. Alexandrino, R. Fernandes, and I. Struchiner for enriching talks that shaped our view of the topic. MdM thanks R. Fernandes and the Department of Mathematics at UIUC for the hospitality during Fall 2017.
MdH was partially supported by National Council for Scientific and Technological Development - CNPq grant 303034/2017-3.
MdM was partially supported by CNPq Ph.D. Fellowship 140771/2015-8, and by CAPES exchange student Fellowship 88881.135952/2016-01.

\section{Background: groupoids, stacks and metrics}


We review the main concepts needed in the rest of the paper, to set notations and conventions, and to serve as a quick reference for the reader. For a thorough exposition on Lie groupoids, we suggest \cite{mkbook,mmbook}. We will use groupoids as models for differentiable stacks, as explained for instance in  \cite{dh}. Riemannian groupoids were introduced in \cite{dhf1}, and the Morita invariance of metrics and a definition of Riemannian stack was presented in \cite{dhf2}.


\subsection{Lie Groupoids}


A {\bf Lie groupoid} $G\toto M$ consists of a manifold $M$ of objects and a manifold $G$ of arrows, two surjective submersions $s,t:G\to M$ indicating the source and target of arrows, and a smooth associative multiplication $m:G\times_M G\to G$ defined over composable pairs,  admitting unit $u: M\to G$ and inverse $i:G\to G$ maps, subject to the usual groupoid axioms. 


Given $x\in M$, its {\bf isotropy group} $G_{x}=s^{-1}(x)\cap t^{-1}(x)$ is a Lie group and an embedded submanifold of $G$, and its {\bf orbit} 
$O_x=\{y\in M:\exists y\xfrom g x\}$ is a submanifold of $M$, possibly not embedded. 
The components of the orbits define the {\bf characteristic foliation} $F_M$ on $M$, that may be singular. 
The {\bf coarse orbit space} $M/G$ is endowed with the quotient topology. 

\begin{example}\label{ex:submersion1}
A surjective submersion $p:M\to B$ gives rise to a {\bf submersion groupoid}, where $G=M\times_{B}M$, $s$ and $t$ are the projections, and $m$ is given by $(z,y)\cdot(y,x)=(z,x)$. The orbits are the fibers of $p$, the isotropies are trivial, and $M/G\cong B$.
As a subexample, if $(U_i)_i$ is an open cover of $M$ then the submersion $\coprod U_i\to M$ leads to the {\bf \v{C}ech groupoid} 
$$G_{\U}=\left(\coprod U_j\cap U_i\toto \coprod U_i\right)$$
When a Lie groupoid $G\toto M$ has no isotropy and its orbit space $M/G$ is Hausdorff then $B=M/G$ is naturally a manifold, and $G\toto M$ identifies with a submersion groupoid \cite{dh}.
\end{example}


\begin{example}\label{ex:orbifold1}
An orbifold is classically defined as a Hausdorff space $X$ locally modeled by a finite group acting on an Euclidean open. An {\bf orbifold groupoid} is built out of an orbifold atlas $(G_i\action U_i,\phi_i)_i$, $\phi_i:U_i/G_i\to X$, by emulating the \v{C}ech groupoid from previous example. 
The objects are $M=\coprod U_i$, and the arrows $G$ consist of compositions of germs of maps in some $G_i$, endowed with a sheaf-like manifold structure  \cite{mmbook}. The orbit space is $M/G=X$.
\end{example}


\begin{example}\label{ex:action1}
A smooth action $K\action M$ of a Lie group $K$ on a manifold $M$ gives rise to an {\bf action groupoid}  $G \toto M$, with 
$G=K\times M$, $s(g, x) = x$, $t(g, x) = g\cdot x$ and $(h, y)(g, x) = (hg, x)$. Note that orbits and isotropy groups agree with the usual ones for the action. In particular, the coarse orbit space $M/G$ agrees with the usual orbit space $M/K$.
\end{example}


\begin{example}\label{ex:foliation1}
A regular foliation $F$ on a manifold $M$ gives rise to a {\bf monodromy groupoid} $\pi_1(F)\toto M$, whose arrows are leafwise homotopy classes of paths. Its orbits are exactly the leaves of $F$ and the isotropy groups are their fundamental groups. Each arrow $[\gamma]\in\pi_1(F)$ induces 
the germ of a transverse diffeomorphism, the holonomy of the path, and the quotient of $\pi_1(F)\toto M$ by holonomy classes is the {\bf holonomy groupoid} $\hol(F)\toto M$ \cite{mmbook}. These Lie groupoids may have non-Hausdorff manifold of arrows.
\end{example}


Given $G\toto M$ a Lie groupoid, the {\bf normal representation} of the isotropy $G_x$ over  $N_xO=T_xM/T_xO$ is given by $g\cdot[v]=[d_gt(w)]$, where $w$ is any vector such that $d_gs(w)=v$. 
If $x$ and $y$ are in the same orbit, then the isotropy groups $G_x,G_y$ are conjugated by any arrow \smash{$y\xfrom g x$}, and this isomorphism is compatible with the normal representations. In fact, we can organize the pointwise normal representations and conjugations into a groupoid representation of the restriction to the orbit $(G_O\toto O)\action(NO\to O)$ \cite{dh}, that serves as a {\bf linear local model}.


A {\bf map} between Lie groupoids $f:(H\toto N)\to(G\toto M)$ is a smooth functor, given by smooth maps $f^{(0)}:N\to M$ on objects and $f^{(1)}:H \to G$ on arrows, which together preserve the groupoid structure. Such a map yield morphisms between the isotropy groups $f_x:H_x\to G_{f(x)}$, and since it must send orbits to orbits, it also yields a continuous map between the orbit spaces $[f]:N/H\to M/G$, as well as linear maps on the normal directions $[d_xf]:N_xO\to N_{f(x)}O$.

A {\bf natural isomorphism} $f\cong f'$ between two maps is given by a smooth map $h: N\to G$ with $s\circ h=f$ and $t\circ h=f'$ and satisfying $f'(g)h_x=h_y f(g)$ for all \smash{$y\xfrom g x$} in $G$.
If $f,f'$ are isomorphic then $[f]=[f']:N/H\to M/G$ and for every $x\in N$ the maps $f_x,[d_xf]$ are related to $f'_x,[d_x f']$ by conjugation by $h_x$.


Given a Lie groupoid $G\toto M$ and an orbit $O\subset M$, and writing $G_O$ for the submanifold of arrows with source and target in $O$, the Lie groupoid $NG_O\toto NO$ whose objects and arrows are the normal bundles of $O$ and $G_O$, and whose structure maps are induced by differentiation, encodes the normal representation discussed before. Then $G\toto M$ is {\bf linearizable} around $O$ if there are opens $O\subset U\subset NO$ and $O\subset V\subset M$ and a Lie groupoid isomorphism
$$f:((NG_O)_U\toto U) \cong (G_V\toto V).$$


A Lie groupoid $G\toto M$ is {\bf proper} if $G$ is Hausdorff and the source-target map $G\to M\times M$ is proper. 
In such a groupoid the isotropy groups $G_x$ are compact, the orbits $O\subset M$ are closed embedded, and the orbit space $M/G$ is Hausdorff. A fundamental theorem by A. Weinstein \cite{w} and N. Zung \cite{z} shows that a proper Lie groupoid is linearizable around its orbits. A novel approach to linearization of groupoids using Riemannian metrics was introduced in \cite{dhf1}.
%
%
%
%
%
%

\subsection{Differentiable stacks}




Following \cite{dh,dhf2}, we say that a map $f:(H\toto N)\to( G\toto M)$ between Lie groupoids is {\bf Morita} if it preserves the transverse geometry, in the sense that the following hold:
\begin{itemize}\itemsep=2pt
 \item $[f]:N/H\to M/G$ is an homeomorphism;
 \item $f_x:H_x\to G_{f(x)}$ is an isomorphism for all $x$; and
 \item $[d_xf]:N_x O\to N_{f(x)} O$ is an isomorphism for all $x$.
\end{itemize}
A {\bf differentiable stack} is a Lie groupoid up to Morita equivalence, in the sense that $H$ and $G$ define the same stack if there is a third groupoid $\tilde H$ and Morita maps $\tilde H\to H$ and $\tilde H\to G$.
We write $[M/G]$ for the differentiable stack presented by $G\toto M$, and think of it as the space $M/G$ enriched with extra smooth info describing the transverse geometry. 
A stack $[M/G]$ is {\bf separated} if it is presented by a proper Lie groupoid $G\toto M$. 
Because of the linearization theorem, separated stacks can be locally recovered from their normal representations.

\begin{example}\label{ex:submersion2}
Smooth manifolds can be identified with separated differentiable stacks $[M/G]$ with no isotropy \cite{dh}. In fact, a Lie groupoid $G\toto M$ is Morita equivalent to the unit groupoid of a manifold $B\toto B$ if and only if $G\toto M$ is a submersion groupoid, as defined in Example \ref{ex:submersion1}.
\end{example}

\begin{example}\label{ex:orbifold2}
We sketched in Example \ref{ex:orbifold1} how to get a Lie groupoid out of an orbifold chart. Different charts yield different Lie groupoids, but a common refinement leads to Morita maps, so all these groupoids are Morita equivalent. 
From a modern perspective, orbifolds are defined as separated stacks with finite isotropy groups. This greatly simplifies the study of maps and suborbifolds. Details on the correspondence between the classic and new approach are in \cite{lerman,moerdijk,mmbook}.
\end{example}

\begin{example}\label{ex:action2}
Given $K\action M$ a Lie group acting over a manifold, the differentiable stack $[M/G]$ arising from the action groupoid (cf. Example \ref{ex:action1}) encodes the equivariant geometry of the action. When the action is free and proper this is just the quotient manifold. In general, $[M/G]$ can be used to compute the equivariant cohomology, as a finite-dimensional variant for the well-known Borel's recipe. For $K$ discrete, the stack $[M/G]$ is studied in \cite{cdhp}.
\end{example}

\begin{example}\label{ex:foliation2}
If $F$ is a foliation on $M$, then the monodromy and holonomy groupoids $\pi_1(F)\toto M$ and $\hol(F)\toto M$ defined in Example \ref{ex:foliation1} are not Morita equivalent in general, there may be nontrivial loops with trivial holonomy. The projection still preserves much of the transverse geometry.
$$(\pi_1(F)\toto M)\to(\hol(F)\toto M)$$
It is a homeomorphism in the orbit spaces and an isomorphism on the normal directions, so they should be equivalent in a theory of reduced differentiable stacks, as the one proposed in \cite{tr}.
\end{example}


To define curves and more general maps between differentiable stacks we use a cocycle formulation.
%
Given $H\toto N$ a Lie groupoid and $\U=(U_i)_i$ an open cover of $N$, the {\bf \v{C}ech groupoid} $H_\U$ is the pullback of $H$ along the submersion $\coprod U_i \to N$ (cf. Example \ref{ex:submersion1}). This groupoid has an explicit construction
   
$$H_\U=\bigg(\coprod_{j,i} H(U_j,U_i)\toto\coprod_{i}U_i\bigg)$$
with arrows \smash{$(y,j)\xfrom{(g,j,i)}(x,i)$} for $x\in U_i$, $y\in U_j$ and \smash{$y\xfrom g x$} in $H$, and product $(h,k,j)(g,j,i)=(hg,k,i)$.
The obvious projection $\pi_\U:H_\U\to H$ is Morita. Note that if $\U'$ is a refinement of $\U$ then we can factor  $\pi_{\U'}$ via $\pi_\U$, and the resulting map $H_{\U'}\to H_{\U}$ is unique up to isomorphism. 


A {\bf cocycle} over $H\toto M$ with values in $G\toto M$ is a Lie groupoid map $f:H_\U\to G$ for some $\U=(U_i)_i$ open cover of $N$. Two cocycles $(\U,f)$ and $(\U',f')$ are {\bf equivalent} if there is a common refinement $\U''$ of $\U$ and $\U'$ and an isomorphism between the restricted groupoid maps $f\r{H_{\U''}}\cong f'\r{H_{\U''}}:H_{\U''}\to G$. 
A {\bf stacky map} $\phi:[N/H]\to [M/G]$ is a cocycle modulo equivalences. They form a category, 
whose isomorphisms are the Morita equivalences \cite[4.5.4]{dh}. 

\begin{example}\label{ex:bundles}
Viewing a manifold $M$ as a groupoid with only identity arrows and a Lie group $K$ as a groupoid with a single object, a groupoid cocycle $M_\U\to K$ recovers the usual notion of cocycle, and a stacky map 
$M=[M/M]\to [\ast/K]$
is the same as a principal $K$-bundle over $M$ \cite{bx,dh}.
Thus $[\ast/K]$ is a finite dimensional model for the classifying space of $K$.
\end{example}


Given $G\toto M$ a Lie groupoid, $x\in M$ and $O\subset M$ its orbit, the action groupoid of the normal representation $G_x\action N_xO$ canonically sits inside the local model and the inclusion is Morita.
$$(G_x\times N_xO\toto N_xO)\to (NG_O\toto NO)$$
We refer to the coarse orbit spaces of these groupoids $N_xM/G_x\cong NO/NG_O$ as the {\bf coarse tangent space} and denote it by $T_{[x]}[M/G]$. 
If $\phi:[N/H]\to [M/G]$ is a stacky map represented by a cocycle $(\U,f)$, and if $y\in U_i$, the {\bf coarse differential} $d_{[y]}\phi:T_{[y]}[N/H]\to T_{[f(y)]}[M/G]$ is given by $d_{[y]}\phi([v])=[d_{(y,i)}f]([v])$.
This is well-defined, in the sense that it only depends on the isomorphism class of the cocycle.

\begin{remark}
Alternatively, Morita maps can be defined as maps that are smoothly fully faithful and essentially surjective \cite{mmbook}. And there is yet another approach to Morita equivalences, by means of principal bibundles \cite{hk}. The equivalence between both definitions of Morita maps and the correspondence with bibundles can be found in \cite{dh}.
\end{remark}


\subsection{Riemannian groupoids and stacks}


In a Lie groupoid $G\toto M$ the pairs of composable arrows $G^{(2)}=G\times_M G$ can be identified with the space of commutative triangles, so it gains a natural $S_3$-action by permuting the vertices.
A {\bf Riemannian groupoid} $(G\toto M,\eta)$ is a Lie groupoid with an $S_3$-invariant metric $\eta=\eta^{(2)}$ on $G^{(2)}$ which is transverse to the multiplication $m:G^{(2)}\to G$ \cite{dhf1}. 
Such an $\eta$ induces metrics $\eta^{(1)}$ and $\eta^{(0)}$ on $G$ and $M$ satisfying the following basic properties:

\begin{itemize}
 \item the maps $m,\pi_1,\pi_2:G^{(2)}\to G$ and $s,t:G\to M$ are Riemannian submersions;
 \item the foliations $F_M$ and its pullback $F_G=s^*F_M=t^*F_M$ on $G$ are Riemannian (cf. Example \ref{ex:foliation3}); 
 \item the normal representations $G_O\action NO$ are by isometries;
 \item the units $u(M)\subset G$ form a totally geodesic submanifold.
\end{itemize}


This definition corrects and extends several previous approaches, and it has two fundamental features. Firstly, it admits plenty of examples, in particular, every proper Lie groupoid can be endowed with one of these metrics via an averaging argument, similar to the construction of metrics on manifolds using partitions of 1. Secondly, the exponential maps of the metrics can be used to weakly linearize the groupoid around any embedded saturated submanifolds  \cite{dhf1}. 





Following \cite{dhf2}, we say that two metrics $\eta_1$ and $\eta_2$ on $G\toto M$ are {\bf equivalent} if they induce the same inner product on the normal vector spaces $N_xO$ for any $x\in M$.
More generally, we define a {\bf Riemannian Morita map} $f:(\tilde G\toto\tilde M,\tilde\eta)\to(G\toto M,\eta)$ as a Morita map between Riemannian groupoids that induces isometries on the normal vector spaces $N_{\tilde x}\tilde O\to N_{f(\tilde x)}O$.
Then $\eta_1$ and $\eta_2$ are equivalent if and only if the identity is a Riemannian Morita map:
$$(G\toto M,\eta_1)\xto{\id}(G\toto M,\eta_2)$$


It is proven in \cite{dhf2} that if $f:(H\toto N)\to(G\toto M)$ is a Morita map and $\eta_G$ is a metric on $G\toto M$, then there exists one $\eta_H$ in $H\toto N$ making $f$ a Riemannian Morita map, that this $\eta_H$ is unique up to equivalence, and moreover, that this pullback construction sets a 1-1 correspondence between equivalence classes of metrics.
A {\bf stacky metric} $[\eta]$ on $[M/G]$ is then defined as the equivalence class of a metric on the groupoid $G\toto M$. 
This generalizes the usual notions of metrics for manifolds and orbifolds.



\begin{example}\label{ex:submersion3}
Following Example \ref{ex:submersion2}, if $G\toto M$ is the submersion groupoid arising from $q:M\to B$, then 
a metric $\eta$ on $G\toto M$ induces  $\eta^M$ on $M$ and $\eta^B$ on $B$ making $q$ a Riemannian submersion, and two metrics $\eta,\eta'$ are equivalent if and only if the induced metrics on $B$ agree. Moreover, any metric $\eta^B$ on $B$ can be induced from a groupoid metric on the submersion groupoid \cite{dhf2}.
Thus, stacky metrics on $[M/G]$ correspond to metrics on $B$.
\end{example}

\begin{example}\label{ex:orbifold3}
If $G\toto M$ is an {\'e}tale Lie groupoid, namely one on which $G, M$ have the same dimension, then a Riemannian structure on $G\toto M$ is the same as a metric on $M$ that is invariant under local bisections, and two metrics are equivalent if and only if they are equal. Thus, following Example \ref{ex:orbifold2}, if $G\toto M$ is proper and {\'e}tale, then $[M/G]$ is an orbifold, and stacky metrics on it agree with the orbifold metrics as classically defined \cite{gh,th}.
\end{example}

\begin{example}\label{ex:action3}
Given $K\action (M,\eta^M)$ an isometric action of a Lie group, a metric $\eta$ can be built on the action groupoid $G\toto M$ by an averaging process described in \cite{dhf2}.
This process involves many choices, and the resulting metric $\eta^{(0)}$  on $M$ that does not agree with $\eta^M$ in general. Still, they do agree on the normal directions, so the equivalence class of $\eta$ does not depend on the choices made. Our stacky metrics allow us to do Riemannian geometry on $M/K$ (see \cite{aalm}).
\end{example}

\begin{example}\label{ex:foliation3}
Let $F$ be a foliation on $M$. A groupoid metric $\eta$ on its holonomy groupoid $\hol(F)\toto M$ induces a metric on $M$ that makes $F$ a {\bf Riemannian foliation}, meaning that if a geodesic is normal to an orbit at a given time then it remains normal to the orbits at every time. Conversely, starting with $\eta^M$ a metric for which $F$ is Riemannian, it is possible to build a groupoid metric $\eta$ out of it, and two such metrics are going to be equivalent \cite{dhf1}. 
\end{example}

\section{Curves on Riemannian stacks}
\label{sec.curves}


One of the main advantages of working with differentiable stacks is that they provide a clean notion for smooth maps, and in particular, for smooth curves on quotient spaces. In this section we study stacky curves, show that their speeds vary continuously, and prove our first main theorem, asserting that for separated Riemannian stacks the normal length of curves recovers the canonical distance defined on the coarse orbit space. We also derive important corollaries.

\subsection{Stacky curves}
\label{subsec:stacky-curves}



Let $G\toto M$ be a Lie groupoid and $[M/G]$ its orbit differentiable stack. We define a {\bf stacky curve} $\alpha:I\to [M/G]$ as a stacky map from a real interval, viewed as a stack via the unit groupoid $I\toto I$. 
Such a curve $\alpha$ is the class of a cocycle $(a_{ji})$ supported over some open cover $\U=(U_i)$ of the interval $I$. 
We use the notations $a_{ji}:I\to G$ and $a_{ii}=a_i:U_i\to M$.
$$(I\toto I)\underset{\sim}{\xfrom{\pi_{\U}}} (\coprod U_{ji}\toto \coprod U_i) \xto{(a_{ji})} (G\toto M)$$
The curves $a_{ji}$ satisfy the cocycle condition $a_{kj}(x)a_{ji}(x)=a_{ki}(x)$ for $x$ on  triple intersections $U_{kji}$. 
We call $(a_{ji})$ a {\bf good cocycle} if it is supported on a dimension 1 cover, namely $i\in\Z$, $U_{ji}=\emptyset$ except for consecutive $i,j$ and there are no triple intersections $U_{kji}$ for different $i,j,k$. Of course, any curve can be presented by a good cocycle, and two good cocycles define the same curve if they restrict to isomorphic good cocycles on a common refinement.

\begin{example}\label{ex:submersion4}
If $G\toto M$ is the submersion groupoid arising from $p:M\to B=M/G$ as in Example \ref{ex:submersion2}, then a good cocycle $(a_{ji}):I_\U\to G$ is  a collection of local lifts $a_i$ to $M$ of a given curve on $B$. In this case the transitions $a_{i+1,i}$ are completely determined by the segments $a_{i}$, for there is no isotropy. Two cocycles are equivalent if the curve on $B$ is the same.
\end{example}

\begin{example}\label{ex:orbifold4}
Let $G\toto M$ be a proper {\'e}tale groupoid and $O=[M/G]$ its orbit orbifold (cf. Example \ref{ex:orbifold2}). A curve $\alpha:I\to O$ is classically defined as a continuous curve $\alpha:I\to |O|=M/G$ that can be locally lifted to smooth curves $a_i:I_i\to U_i$ on orbifold charts \cite[2.4]{mmbook}.
A stacky curve $\alpha:I\to[M/G]$ induces a curve in this classic sense, for if $(a_{ji})$ is a cocycle representing it, then the segments $a_{i}$ serve as local lifts into orbifold charts. But the classic notion of curve is too sloppy and has not a clear interpretation in terms of stacks. For instance, the smooth curves
$$\alpha_{\pm}:\R\to\R^2 \qquad 
\alpha_{\pm}(t)=\begin{cases}
                 (t,e^{1/t})& t\leq 0\\
                 (t,\pm e^{-1/t})& t\geq 0
                \end{cases}$$
define the same curve on the quotient $\R^2/\Z_2$ defined by the reflection along the $x$-axis, but since $\alpha_+$ and $\alpha_-$ are not locally related by the action, then they are different as stacky curves. 
%
%
\end{example}

\begin{example}\label{ex:action4}
Following Example \ref{ex:action2}, given $K\action M$ an action and $G\toto M$ the resulting action groupoid, we can rewrite a good cocycle $(a_{ji})$ for a stacky curve $\alpha:I\to [M/G]$ as a family of curves $a_{i}:U_i\to M$ and $g_{i+1.i}:U_{i+1,i}\to K$ such that $g_{i+1,i}(t)a_i(t)=a_{i+1}(t)$ for $t\in U_{i+1,i}$. The collection $(g_{ji})$ defines a $K$-cocycle over the covering $(U_i)$ of the interval, and since every principal $K$-bundle over $I$ is trivial, we can integrate the cocycle, gaining a global representative $a:I\to M$ for any stacky curve $\alpha$. In other words, we are just translating the the local pieces $a_i$ using the action of $K$ to a get a single $a$ (compare with  \cite[Thm 6.6]{cdhp}).
\end{example}

\begin{example}\label{ex:foliation4}
A regular foliation $F$ on a manifold $M$ can be described by a family of submersions $f_i:V_i\to W_i\subset\R^q$, where $V_i\subset M$, such that when $V_{ji}\neq\emptyset$ there exists a diffeomorphism 
$\gamma_{ji}:W_i\to W_j$ 
satisfying $f_j=\gamma_{ji}f_i$ \cite{bh}.
It is enough to consider an open cover $V_i$ such that if $V_{ji}\neq\emptyset$ then $V_i\cup V_j$ is included in a foliated chart.
If we use $\hol(F)\toto M$ to make sense of $[M/F]$ as a stack 
(cf. Example \ref{ex:foliation2}), and we fix the defining submersions $f_i:V_i\to W_i$ as before, then a stacky curve $\alpha:I\to [M/F]$ can be reinterpreted as a cocycle $(a_{ji})$ with $(U_i)$ a good open cover of $I$, $a_i:U_i\to V_i$ and $a_{ji}(t)$ the germ of $\gamma_{ji}$ at $a_i(t)$. The relevant information of each segment $a_i$ is in the composition $b_i=f_ia_i:U_i\to W_i$, and a stacky curve on the leaf space is the same as a family of curves $b_i:U_i\to W_i$ that are connected by the defining cocycle $(\gamma_{ji})$.
\end{example}



Given $G\toto M$ a Lie groupoid, the {\bf velocity} of a stacky curve $\alpha: I\to [M/G]$ at $t_0\in I$ is 
$\alpha'(t_0)=d_{[t_0]}\alpha[\partial_t\r{t_0}]=[a'_{k}(t_0)]\in T_{[a_k(t_0)]}[M/G]$, 
where $(a_{ji})$ is a cocycle for $\alpha$ and $t_0\in U_k$.
Given a groupoid metric $\eta$ on $G\toto M$, it yields a $G_x$-equivariant inner product on $N_xO_x=T_xM/T_xO$, so we can define the normal norm $\n{v}_N$ of any vector $v\in T_xM$, and set the {\bf speed} of $\alpha$ at $t_0$ as $\n{\alpha'(t_0)}=\n{a'(t_0)}_N$. 
This is well-defined, it does not depend on the cocycle $(a_{ji})$ representing $\alpha$, for the groupoid version of the normal representations are by isometries.

\begin{remark}
A good cocycle $(a_{ji})$ for a stacky curve should be compared with the notion of $G$-path \cite[2.3]{gh}, \cite[3.3]{mm2}. They are defined as a sequence alternating continuous paths $x_k\overset{\gamma_k}\leadsto y_k$ on $M$ and arrows $y_k\xto{g_k} x_{k+1}$ in $G$. Given a good cocycle $(a_{ji})$ for a stacky curve we can build a $G$-path by splitting the interval, choosing $t_k\in U_{k+1,k}$, and setting $\gamma_k=a_{k}\r{[t_k,t_{k+1}]}$ and $g_k=a_{k+1,k}(t_k)$. Conversely, a $G$-path on which every $\gamma_k$ is smooth gives rise to a good cocycle by first extending $g_{k}$ to a smooth curve $\tilde g_k:(t_k-\epsilon,t_k+\epsilon)\to G$, $\tilde g_k(t_k)=g_k$, and then modifying $\gamma_k$ and $\gamma_{k+1}$ near $t_k$ so as to agree with $s\tilde g_k$ and $t\tilde g_k$. Even though these operations depend on choices, they are well-defined up to equivalence classes of cocycles and small {\em deformations of $G$-paths} \cite{mm2}.
The advantages of our cocycles is that they fit the general theory of stacky maps, without the need of an ad-hoc definition, and they allow us  to make sense of smoothness.
\end{remark}



\subsection{Continuity of the normal speed}


We will define the length of a stacky curve in a Riemannian stack as the integral of its speed. For this to make sense, we need to show that the speed varies continuously, and this is rather subtle when the dimensions of the normal directions vary. This subsection takes care of this technical issue. 


Let $(G\toto M,\eta)$ be a Riemannian groupoid, $[M/G]$ its orbit stack, and $\alpha:I\to [M/G]$ a stacky curve, presented by a good cocycle $(a_{ji})$ supported over some open cover $(U_i)$. We want to show that the speed $\n{\alpha'(t)}$, as defined in previous subsection, varies continuously on $t$. Working locally, we can assume that $\alpha$ is in fact presented by a curve $a:I\to M$, and as discussed before, $\n{\alpha'(t)}=\n{a'(t)}_N$. We will first show the continuity at a fixed point of the groupoid, and then extend to the general case by passing to a transversal.


\begin{proposition}\label{prop:cont-speed}
Let $(G\toto M, \eta)$ be a Riemannian groupoid and $a:I\to M$ a smooth curve. Then the normal component of the speed $\n{a'(t)}_N$ is continuous at any $t_0$ in $I$.
\end{proposition}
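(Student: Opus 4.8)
The plan is to reduce the continuity of the normal speed to that of the \emph{tangential} speed, treat the fixed-point case by hand, and handle the general case by passing to a slice. Write $P_x$ for the $\eta^{(0)}$-orthogonal projection $T_xM\to T_xO_x$ onto the tangent to the orbit, so that $\n{a'(t)}^2=\n{a'(t)}_N^2+\n{P_{a(t)}a'(t)}^2$. Since $t\mapsto\n{a'(t)}$ is smooth, it suffices to prove that the tangential speed $t\mapsto\n{P_{a(t)}a'(t)}$ is continuous at $t_0$.

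First I would settle the case where $x_0=a(t_0)$ is a fixed point, so that $T_{x_0}O_{x_0}=0$ and the target value of the tangential speed is $0$. If $a'(t_0)=0$ this is clear, since $\n{P_{a(t)}a'(t)}\le\n{a'(t)}\to 0$. Assume then $a'(t_0)\neq 0$, work in a geodesically convex normal neighborhood of $x_0$, and for $t$ near $t_0$ (so $a(t)\neq x_0$) let $\gamma_t$ be the minimizing radial geodesic from $x_0$ to $a(t)$, with unit velocity $\nabla r(a(t))$ at its endpoint. Because $\eta^{(0)}$ makes $F_M$ a Riemannian foliation, $\gamma_t$ leaves the point-leaf $\{x_0\}$ orthogonally and hence stays orthogonal to every leaf it meets; in particular $\nabla r(a(t))\perp T_{a(t)}O_{a(t)}$. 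Since orthogonal projection onto a subspace $U\subseteq W$ has norm at most that onto $W$, and $T_{a(t)}O_{a(t)}\subseteq(\nabla r(a(t)))^\perp$, we obtain
$$\n{P_{a(t)}a'(t)}\le\n{a'(t)-\p{a'(t),\nabla r(a(t))}\,\nabla r(a(t))}.$$
Writing $a(t)=\exp_{x_0}(\xi(t))$ with $\xi(t_0)=0$ and $\xi'(t_0)=a'(t_0)$, the approach direction $\xi(t)/\n{\xi(t)}$, and thus $\nabla r(a(t))$, converges to $\pm a'(t_0)/\n{a'(t_0)}$ as $t\to t_0^{\pm}$, while $a'(t)\to a'(t_0)$; in either case the right-hand side tends to $0$. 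This gives $\n{P_{a(t)}a'(t)}\to 0$, proving continuity at a fixed point.

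For general $x_0$ I would reduce to the previous case by passing to a transversal. Take a slice $S=\exp_{x_0}\big((T_{x_0}O)^\perp\cap B_\epsilon\big)$ through $x_0$. The restricted groupoid $G_S\toto S$ is a Riemannian groupoid for which $x_0$ is a fixed point, since $S\cap O=\{x_0\}$ forces every arrow of $G$ between nearby points of $S$ into the isotropy $G_{x_0}$. By the weak linearization of Riemannian groupoids around their orbits, a neighborhood of $x_0$ is modeled on the normal data, $S$ meets all nearby orbits, and the projection $q$ onto $S$ along the orbit directions is a Riemannian submersion carrying $a$ to $b=q\circ a$ with $\n{a'(t)}_N=\n{b'(t)}_N^S$; equivalently, the inclusion $(G_S\toto S)\hookrightarrow(G\toto M)$ is a Riemannian Morita map and hence induces isometries on the normal spaces and preserves normal speeds. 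Applying the fixed-point case to $b$ at $x_0\in S$ yields continuity of $\n{b'(t)}_N^S$, and therefore of $\n{a'(t)}_N$, at $t_0$.

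The heart of the argument, and the step I expect to be the most delicate, is the fixed-point estimate: although the orbit directions $T_{a(t)}O$ need not collapse as $a(t)\to x_0$ (their dimension may stay positive), the transnormality forced by the Riemannian foliation aligns them with the distance spheres about $x_0$, and hence away from the approach direction $a'(t_0)$, which is exactly what drives the tangential speed to zero. The second delicate point lies in the reduction of the third paragraph, namely ensuring that the identification of normal speeds holds along the \emph{whole} nearby curve and not merely at $x_0$; this is precisely what the compatibility of $\eta$ with the linear local model provides.
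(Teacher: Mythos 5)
Your proof is correct and follows essentially the same route as the paper's: both settle the $0$-dimensional-orbit case first using the key fact that radial geodesics emanating from $a(t_0)$ are orthogonal to the leaves of the (singular Riemannian) characteristic foliation, and then reduce the general case to it by passing to a slice $S$ equipped with a groupoid metric on $G_S\toto S$ making the inclusion a Riemannian Morita map. The remaining differences are cosmetic: the paper carries out the fixed-point estimate as a Taylor expansion in normal coordinates instead of your projection-limit argument with $\nabla r$, and it produces the transversal curve as $b=t\circ h\circ a$ for a local section $h$ of $s:t^{-1}(S)\to U$ rather than your projection $q$ (note also that only $T_{x_0}O_{x_0}=0$, not the literal identity $S\cap O=\{x_0\}$, is needed or available for non-proper groupoids, which your case 1 indeed only uses).
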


\begin{proof}
{\it Case the orbit of $a(t_0)$ is $0$-dimensional.}
If $a'(t_0)=0$ then the result follows from the inequality $0\leq \n{a'(t)}_N\leq \n{a'(t)}$.
Suppose then that $a'(t_0)\neq0$, assume that $t_0=0$, and working in normal coordinates given by the metric $\eta$ around $a(t_0)\in M$, we can further assume that $a(t_0)=0$, that $M=B\subset\R^n$ is a small ball centered at $0$, that the matrix $A(t)$ of the metric $\eta$ is the identity at $0$ and has vanishing first derivatives at $0$, and that the geodesic spheres around $t_0$ agree with the usual spheres.

The radial vector field $R$ is orthogonal to the characteristic foliation $F$, for its integral curves are geodesics, and $F$ is singular Riemannian.
Let $\theta(t)\geq0$ be the angle between $a'(t)$ and its projection to $F^\bot$. Then $\theta(0)=0$, and since $\n{a'(t)}_N=\cos(\theta(t))\n{a'(t)}$, to show that $\n{a'(t)}_N$ is continuous at $0$ it is enough to show that $\lim_{t\to 0}\theta(t)=0$, or in other  words, that $a'(t)$ is almost orthogonal to $F$ near $0$. We do this by computing the angle $\tilde\theta(t)$ between $R$ and $a'(t)$, for $\theta\leq\tilde\theta$, as it follows from the fact that $R_x$ belongs to $F_x^\bot$ for all $x\in B$.

Writing $a'(0)=v$ we have $a(t)=tv+O(t^2)$ and $a'(t)=v+ O(t)$. From
$$\eta\left({a'(t),R_{a(t)}}\right)=
[v+O(t)]^t[I+ O(t^2)][tv+O(t^2)]=
t\n{v}^2+ O(t^2)$$
and
$$\n{a'(t)}^2_{\eta}\n{R_{a(t)}}^2_{\eta}=
\p{v+O(t),v+O(t)}\p{tv+O(t^2),tv+O(t^2)}=
t^2\n{v}^4+O(t^3)$$
we conclude 
$\lim_{t\to 0} \cos^2(\tilde\theta(t))=
\lim_{t\to 0}\frac{t^2\n{v}^4+O(t^3)}{t^2\n{v}^4+O(t^3)} = 1$
and the result follows.


{\it Case the orbit of $a(t_0)$ is not $0$-dimensional.}
The image of a small ball $B\subset N_xO$ centered at $0$ under the exponential map is a submanifold $S$ transverse to each orbit it crosses, and $T_xS\cong N_xO$.
It easily follows from \cite[Prop 3.5.1]{dh} that the map $s:t^{-1}(S)\to U\subset M$ is submersive onto an invariant open, and that the restriction groupoid $G_S\toto S$ is a Lie subgroupoid of $G$.
Let $h$ be a smooth local section for $s:t^{-1}(S)\to U\subset M$ such that $h(x) = u(x)$. Then $b(t) = th(a(t))$ is a curve in $S$ with $b(t_0)=x$ which
is naturally isomorphic to $a$, and therefore, such that $\n{b'(t)}_N = \n{a'(t)}_N$ for all $t$ close to $t_0$. 
We can now use the 0-dimensional case with the curve $b$ in the groupoid $G_S\toto S$, regarded with a groupoid metric that makes the inclusion $(G_S\toto S)\to (G_U\toto U)$ a Riemannian Morita map (cf. \cite[Thm 6.3.3]{dhf2}). This groupoid metric may not agree with the one $G_S$ inherits as a submanifold, at least a priori, but it does insure that the norms of normal vectors is preserved, so $\n{b'(t)}_N$ is the same in $G_S$ as in $G_U$. 
\end{proof}



As an immediate corollary of previous proposition, we establish the following technical fundamental result, necessary to make sense of lengths of curves in a stack.

\begin{corollary}\label{cor:cont-speed-stack}
The speed of a stacky curve $\alpha:I\to[M/G]$ on a Riemannian stack varies continuously on $t$.
\end{corollary}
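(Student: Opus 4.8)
The plan is to reduce the statement to Proposition \ref{prop:cont-speed} by exploiting the local nature of continuity. Since continuity of a real-valued function on $I$ is a local property, it suffices to verify it in a neighborhood of each $t_0\in I$, and this lets me trade the global stacky curve for the honest smooth curves appearing in a cocycle.

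First I would fix a good cocycle $(a_{ji})$ representing $\alpha$, supported over an open cover $(U_i)$ of $I$, and recall from the definition of speed that for $t$ in a given $U_k$ one has $\n{\alpha'(t)}=\n{a_k'(t)}_N$. Each $a_k:U_k\to M$ is an ordinary smooth curve in the Riemannian groupoid $(G\toto M,\eta)$, so Proposition \ref{prop:cont-speed} applies verbatim to $a_k$ and shows that $t\mapsto\n{a_k'(t)}_N$ is continuous throughout $U_k$.

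Next I would check that these local functions assemble into a single well-defined function on $I$. If $t$ lies in an overlap $U_j\cap U_i$, the transition arrow $a_{ji}(t)$ realizes the normal representation carrying $[a_i'(t)]$ to $[a_j'(t)]$, and this is an isometry of normal spaces, so $\n{a_i'(t)}_N=\n{a_j'(t)}_N$. This is exactly the well-definedness of the speed already noted after the definition. Hence the speed of $\alpha$ restricts to the continuous function $\n{a_k'(\cdot)}_N$ on each member $U_k$ of an open cover of $I$, and local continuity forces global continuity.

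The honest remark is that there is no genuine obstacle here. All the analytic difficulty, in particular the delicate estimate controlling the angle $\theta(t)$ as the dimension of the normal directions jumps, is already packaged inside Proposition \ref{prop:cont-speed}. The corollary is a formal gluing statement; the only point demanding care beyond invoking the proposition is the consistency of the local speeds on overlaps, and that is guaranteed by the isometric nature of the groupoid normal representations.
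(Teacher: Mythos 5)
Your proposal is correct and is essentially the paper's own proof: localize, note that $\n{\alpha'(t)}=\n{a_k'(t)}_N$ on each $U_k$, and invoke Proposition \ref{prop:cont-speed}. The overlap-consistency check you spell out is the well-definedness of the speed already recorded after its definition, so there is nothing to add.
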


\begin{proof}
If $\alpha$ is given by a cocycle $(a_{ji})$ supported over $(U_i)$ and $t_0\in U_k$, then in a neighborhood of $t_0$ we have $\n{\alpha'(t)}=\n{a_k(t)}_N$ and this is continuous by Proposition \ref{prop:cont-speed}. 
\end{proof}


We remark that even $\theta(t)$ is continuous at a fixed point, it may fail to be differentiable, as we will show in next example. This example also shows that, given $(G\toto M,\eta)$ a Riemannian groupoid and given $a:I\to M$, there may not exist a curve $b:I\to M$ isomorphic to $a$ that is orthogonal to the characteristic foliations, otherwise $\n{a(t)}_N=\n{b(t)}$ would indeed be smooth.

\begin{example}
Let $M=\R^2$ with the $S^1$-action by rotations. The resulting action groupoid $G\toto M$ can be endowed with a groupoid metric $\eta$ such that $\eta^{(0)}$ is the standard metric on $\R^2$ \cite[4.9]{dhf1}. Let $a:I\to M$ be the parabola curve $a(t)=(t,t^2)$ on $\R^2$. Then $a(0)=0$ is a fixed point,  
$\theta(t)=\cos^{-1}(\frac{t+2t^3}{\sqrt{1+4t^2}\sqrt{t^2+t^4}})$ for $t\neq0$ and $\theta(0)=0$ is continuous at $0$ but not differentiable.
\end{example}

\subsection{Normal distance by stacky curves}


In a Riemannian manifold $(M,\eta^M)$ the length $\ell(a)$ of a curve $a:I\to M$ is the integral of its speed. If $M$ is connected, then a distance $d(x,y)$ can be defined, as the infimum of the length of curves connecting $x$ and $y$. We call it the {\bf metric distance}.
$$\ell(a)=\int_I \n{a'(t)} dt \qquad 
d(y,x)=\inf\{\ell(a): a \text{ smooth, } x,y\in a(I)\}$$




Given a Riemannian groupoid $(G\toto M,\eta)$ with coarse orbit space $M/G$  connected, a {\bf normal pseudo-distance} $d_N$ on $M/G$ can be defined, by considering {\bf chains} $(x_0,\dots,x_{2n+1})$ such that $x_{2i},x_{2i+1}$ are in the same orbit and $x_{2i-1},x_{2i}$ are in the same component, and setting
$$d_N([x],[y])=
\inf\bigg\{\sum_{i=1}^{n} d(x_{2i-1},x_{2i}): (x_i) \text{ chain from $x$ to $y$}\bigg\}$$

This pseudo-distance $d_N$ was studied by \cite{ppt}, for a proper Lie groupoid $G\toto M$ equipped with a metric on the units that is transversally invariant. 
They establish some properties of $d_N$ (cf. \cite[Thm 6.1]{ppt}), such that $d_N([x],[y])=d(O_x,y)$ for $y$ in a small tubular neighborhood of $O_x$, from where it  follows that $d_N$ is a distance. Moreover, since $d_N([x],[y])\leq d(x,y)$, they remark that the orbit space $M/G$ inherits some properties from $M$, like being a length space, and an  Alexandrov space when $(M,\eta)$ has bounded curvature.


We have seen in Proposition \ref{prop:cont-speed} that the speed of a stacky curve varies continuously. This is enough to make sense of the {\bf length} of a stacky curve $\alpha: I\to [M/G]$, defined as the integral of the speed,
$\ell(\alpha)=
\int_{I} \n{\alpha'(t)} dt$.
If $\alpha$ is presented by a good cocycle $(a_{ji},U_i)$, then we have
$$\ell(\alpha)=
\int_{I} \n{\alpha'(t)} dt=
\sum_{i}\int_{U_i}\n{a'_{i}(t)}_Ndt - 
\sum_{i}\int_{U_{i+1,i}}\n{a'_{i+1,i}(t)}_Ndt$$
This is because there are no triple intersections, and the first sum is counting twice each overlap $U_{i+1,i}$. 
Note that for $t\in U_{i+1,i}$ the source and target yield isometries
$N_{a_{i+1,i}(t)}G_O\to N_{a_{i}(t)}O$ and
$N_{a_{i+1,i}(t)}G_O\to N_{a_{i+1}(t)}O$
mapping $[a'_{i+1,i}(t)]$ to $[a'_{i}(t)]\in $ and $[a'_{i+1}(t)]\in N_{a_{j}(t)}O$, respectively. 


Our first main theorem shows that the quotient pseudo-distance $d_N$ can be measured with stacky curves. This offers us an alternative approach to the results in \cite{ppt}, and the opportunity to strengthen some of them.

\begin{theorem}\label{thm:distance-length}
If $(G\toto M,\eta)$ is a Riemannian groupoid and $M/G$ is connected, 
then $d_N([x],[y])$ is the infimum of the lengths $\ell(\alpha)$ of stacky curves $\alpha:I\to[M/G]$ connecting $[x]$ and $[y]$.
\end{theorem}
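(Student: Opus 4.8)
The plan is to prove the two inequalities separately. For the bound $\inf_\alpha\ell(\alpha)\le d_N([x],[y])$, I would start from a chain $(x_0,\dots,x_{2n+1})$ from $x$ to $y$ realizing $d_N([x],[y])$ up to $\epsilon$, and manufacture a good cocycle out of it. On each same-component pair $(x_{2i-1},x_{2i})$ I choose a smooth curve $\gamma_i$ in $M$ of length within $\epsilon/n$ of $d(x_{2i-1},x_{2i})$, and on each same-orbit pair $(x_{2i},x_{2i+1})$ I pick an arrow $g_i\in G$ with $s(g_i)=x_{2i}$ and $t(g_i)=x_{2i+1}$. Assembling the $\gamma_i$ as the segments $a_i$ of a cocycle over a dimension-$1$ cover of $I$, made stationary on small overlaps, and using the constant transitions $a_{i+1,i}\equiv g_i$, the transition terms $\int_{U_{i+1,i}}\|a'_{i+1,i}\|_N\,dt$ vanish. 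By the length formula, $\ell(\alpha)=\sum_i\int_{U_i}\|a'_i\|_N\,dt\le\sum_i\ell(\gamma_i)$, where I use $\|\cdot\|_N\le\|\cdot\|$; letting $\epsilon\to0$ gives the inequality.

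For the reverse bound $d_N([x],[y])\le\ell(\alpha)$ for an arbitrary stacky curve $\alpha$, the core is a local estimate: for a smooth $a:[t_1,t_2]\to M$ with image in a tubular neighborhood of $O_{a(t_1)}$,
$$d_N([a(t_1)],[a(t_2)])\ \le\ d(O_{a(t_1)},a(t_2))\ \le\ \int_{t_1}^{t_2}\|a'(t)\|_N\,dt.$$
The first inequality is immediate from the definition of $d_N$, using the length-$1$ chain that jumps within $O_{a(t_1)}$ to the foot of the minimizing geodesic to $a(t_2)$. The second is where the Riemannian geometry enters: writing $r(p)=d(O_{a(t_1)},p)$, the function $r$ is $1$-Lipschitz with $\|\nabla r\|\le1$, and since $F_M$ is a (singular) Riemannian foliation its gradient is horizontal, $\nabla r\in F_M^\bot$, wherever $r$ is smooth. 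Pairing against the horizontal gradient only sees the $F_M^\bot$-component of $a'(t)$, so $|\tfrac{d}{dt}r(a(t))|=|\langle\nabla r,a'(t)\rangle|\le\|a'(t)\|_N$, and integrating along $a$ yields the estimate.

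With the local estimate in hand, I would finish by chaining. Given a good cocycle $(a_{ji})$ for $\alpha$ connecting $[x]=\alpha(t_x)$ and $[y]=\alpha(t_y)$, the continuity of the speed (Corollary \ref{cor:cont-speed-stack}) together with a Lebesgue-number argument let me choose a partition $t_x=s_0<\dots<s_m=t_y$ so fine that each $a_{i(k)}|_{[s_k,s_{k+1}]}$ lies in a single $U_{i(k)}$ and inside a tube around $O_{a_{i(k)}(s_k)}$. At each partition point the values of consecutive segments lie in one orbit, so $\alpha(s_k)$ is a well-defined point of $M/G$ and the orbit jumps cost nothing; applying the triangle inequality for the pseudo-distance $d_N$ and then the local estimate on each piece gives
$$d_N([x],[y])\ \le\ \sum_{k}d_N(\alpha(s_k),\alpha(s_{k+1}))\ \le\ \sum_k\int_{s_k}^{s_{k+1}}\|\alpha'(t)\|\,dt\ =\ \ell(\alpha).$$
Taking the infimum over $\alpha$ completes the proof.

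I expect the crux to be the second inequality of the local estimate when $a(t_1)$ lies on a lower-dimensional orbit, where $r$ fails to be smooth at the orbit and the orbit dimension jumps. I would handle this exactly as in Proposition \ref{prop:cont-speed}: pass to a slice $S=\exp(B)$, with $B\subset N_{a(t_1)}O$ a small ball, and to the restricted groupoid $G_S\toto S$ equipped with a metric making the inclusion $(G_S\toto S)\to(G_U\toto U)$ a Riemannian Morita map, so that normal norms and the pseudo-distance $d_N$ are preserved. In $G_S\toto S$ the orbit through $a(t_1)$ is $0$-dimensional, the radial field is normal to the characteristic foliation, and the estimate reduces to the regular computation already carried out there.
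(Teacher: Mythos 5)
Your first inequality (from chains to stacky curves) is correct and is essentially the paper's own argument: both proofs turn an almost-optimal chain into a good cocycle. Your variant, reparametrizing each $\gamma_i$ to be stationary on the overlaps and taking constant transitions $a_{i+1,i}\equiv g_i$, is if anything cleaner than the paper's construction, which instead builds $a_{i+1,i}$ as a local lift of $a_i$ along the source map. Your second inequality is a genuinely different route in the regular part: the paper never uses the distance function $r=d(O_{a(t_1)},\cdot)$; around each $t_0$ it replaces $a$ by an isomorphic curve $b=t\circ h\circ a$ (with $h$ a local section of the source map into $t^{-1}(S)$) satisfying $\n{b'(t_0)}=\n{b'(t_0)}_N$, invokes continuity of the normal speed (Proposition \ref{prop:cont-speed}) to get $\n{b'(t)}\leq(1+\epsilon)\n{b'(t)}_N$ near $t_0$, and then estimates $d_N\leq d\leq\int\n{b'}\leq(1+\epsilon)\int\n{a'}_N$, with every quantity measured in the ambient metric $\eta^{(0)}$.

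The genuine gap is in your handling of the singular case, and it is a circularity. You pass to the restricted groupoid $G_S\toto S$, endowed with a metric making $(G_S\toto S)\to(G_U\toto U)$ a Riemannian Morita map, and assert that ``normal norms and the pseudo-distance $d_N$ are preserved.'' Preservation of normal norms is the definition of a Riemannian Morita map, but preservation of $d_N$ is precisely Corollary \ref{cor:Morita-distance}, which the paper deduces \emph{from} Theorem \ref{thm:distance-length}; it cannot be invoked while proving that theorem. Nor is it elementary here: the metric on $S$ supplied by \cite[Thm 6.3.3]{dhf2} is not the submanifold metric --- it is only constrained on the directions normal to the $G_S$-orbits --- so there is no a priori inequality in either direction between $d^S$ and the restriction of $d$, and hence none between the two chain pseudo-distances $d_N^S$ and $d_N$. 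The repair is exactly the paper's device: use the slice only to manufacture, inside $M$, the isomorphic curve $b$ whose velocity at $t_0$ is orthogonal to the orbit, and then run your Lipschitz-type estimate (or the paper's $(1+\epsilon)$-estimate) entirely in $(M,\eta^{(0)})$, so that no pseudo-distance is ever transported across a Morita map. Two smaller points: the theorem is stated for arbitrary Riemannian groupoids, where orbits are only immersed, so your tubular neighborhood of $O_{a(t_1)}$ and the smoothness of $r$ must be phrased for embedded plaques; and since the curve may spend a set of positive measure inside the orbit, where $r$ is not differentiable, the identity $\frac{d}{dt}r(a(t))=\langle\nabla r,a'(t)\rangle$ should be asserted only a.e., with a density-point argument on $\{t:a(t)\in O\}$.
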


\begin{proof}
Fix $x,y$ points in $M$.
Given $\epsilon>0$, we will build a stacky curve $\alpha:I\to [M/G]$ connecting $[x],[y]$ and such that $\ell(\alpha)< d_N([x],[y])+\epsilon$.
By definition of $d_N$, we know of the existence of a chain $(x_0,\dots,x_{2n+1})$
such that $x_0=x$, $x_{2n+1}=y$ and
$\sum_{i=1}^{n} d(x_{2i-1},x_{2i})<d_N([x],[y])+\epsilon/2$.
The idea is to pick for each $i$ a curve $a_i:U_i\to M$ connecting $x_{2i-1}$ and $x_{2i}$ and such that 
$\ell(a_i)\leq d(x_{2i-1},x_{2i})+\frac{\epsilon}{2n}$, and then combine the several $a_i$ into a cocycle $(a_i)$ for a stacky curve $\alpha$. 
We can do this inductively. After picking $a_i$, let $a_i(t_{2i})=x_{2i}$, let 
\smash{$x_{2i+1}\xfrom{g_i}x_{2i}$} be an arrow in $G$, and let $a_{i+1,i}:J_i\to G$ be a local lift of $a_i$ along $s$ such that $a_{i+1,i}(t_{2i})=g_i$. Then $ta_{i+1,i}(t_{2i})=x_{2i+1}$, and if $J_i$ is small enough,  we can pick the next curve $a_{i+1}$ so as to agree with $ta_{i+1,i}$ in $J_i=U_{i+1,i}$ and still satisfy $\ell(a_i)\leq d(x_{2i-1},x_{2i})+\frac{\epsilon}{2n}$. This way $\alpha=(a_{ji})$ satisfy 
$$\ell(\alpha)\leq \sum_{i=1}^{n}\ell(a_i)\leq \sum_{i=1}^{n}\big(d(x_{2i-1},x_{2i})+ \epsilon/2n\big)\leq d_N([x],[y])+\epsilon.$$

%


So far, we have shown that $\inf\{\ell(\alpha): [x],[y]\in\alpha(I)\}\leq d_N([x],[y])$. For the converse, given $\alpha:I\to[M/G]$ a stacky curve connecting $[x]$ and $[y]$, let us show that $(1+\epsilon)\ell(\alpha)$ must be greater or equal than $d_N([x],[y])$, for arbitrary $\epsilon>0$. Because of the additivity of $\ell$ and the triangle inequality of $d_N$, we can subdivide $I$ in small intervals and show the inequality for each piece of curve. 
This allow us to work locally,
and without loss of generality, we can assume that $\alpha$ is presented by a single smooth curve $a:I\to M$.
Given $t_0\in I$, we can assume that $\n{a'(t_0)}=\n{a'(t_0)}_N$, otherwise we can substitute $a$ by an isomorphic curve satisfying this, as shown along the proof of Proposition \ref{prop:cont-speed}. 
Since $\n{a'(t)_N}$ is continuous, we have that
$
\n{a'(t)}\leq (1+\epsilon) \n{a'(t)}_N
$
in a neighborhood of $t_0$. By compacity we can cover the interval $I$ with finitely many such neighborhoods, and get
$$d_N([x],[y])\leq \sum_i d_N([a(t_i)],[a(t_{i+1})])\leq 
\sum_i \int_ {t_i}^{t_{i+1}} (1+\epsilon)\n{a'(t)}_Ndt =(1+\epsilon)\ell(\alpha).$$
Since $\epsilon$ is arbitrary, we have shown $d_{N}([x],[y])\leq \ell (\alpha)$ and the proof is completed.
\end{proof}


We close with some immediate corollaries of our characterization of $d_N$ by using stacky curves.

\begin{corollary}\label{cor:Morita-distance}
If $\phi:(\tilde G\toto \tilde M,\tilde\eta)\to (G\toto M,\eta)$ is a Riemannian Morita map, then the map $\bar\phi:\widetilde{M}/\widetilde{G}\to M/G$ between the coarse orbit spaces preserves the pseudo-distance.
\end{corollary}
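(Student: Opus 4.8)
The plan is to leverage Theorem~\ref{thm:distance-length}, which characterizes $d_N$ purely in terms of the infimum of lengths of stacky curves, together with the fact that a Riemannian Morita map induces a bijection on orbit spaces that preserves the length of stacky curves. The key observation is that $d_N$ is now an \emph{intrinsic} quantity depending only on stacky curves and their lengths, so preserving the distance reduces to transporting stacky curves across the Morita map and checking their lengths are unchanged.

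First I would recall that, since $\phi:(\tilde G\toto\tilde M)\to(G\toto M)$ is Morita, the induced map $\bar\phi=[\phi]:\widetilde M/\widetilde G\to M/G$ is a homeomorphism; in particular it is a bijection on points of the coarse orbit spaces, so it makes sense to compare $d_N$ on both sides. By Theorem~\ref{thm:distance-length}, for any $[\tilde x],[\tilde y]\in\widetilde M/\widetilde G$ we have
$$d_N([\tilde x],[\tilde y])=\inf\{\ell(\tilde\alpha):\tilde\alpha:I\to[\widetilde M/\widetilde G]\text{ connects }[\tilde x],[\tilde y]\},$$
and similarly for the images under $\bar\phi$. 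The plan is then to show these two infima agree.

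The central step is that $\phi$ sets up a bijection between stacky curves connecting $[\tilde x],[\tilde y]$ in $[\widetilde M/\widetilde G]$ and stacky curves connecting $\bar\phi[\tilde x],\bar\phi[\tilde y]$ in $[M/G]$, given by postcomposition $\tilde\alpha\mapsto\phi\circ\tilde\alpha$, and that this bijection \emph{preserves length}. Bijectivity follows because a Morita map is an isomorphism in the category of stacky maps (cf.\ \cite[4.5.4]{dh}), so postcomposition with $\phi$ is invertible up to equivalence of cocycles. Length preservation is where the Riemannian hypothesis enters: the length of a stacky curve is the integral of its normal speed $\n{\alpha'(t)}_N$, computed from the normal norm on $N_xO$; and by definition a Riemannian Morita map induces \emph{isometries} $N_{\tilde x}\widetilde O\to N_{f(\tilde x)}O$ on the normal spaces. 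Concretely, if $\tilde\alpha$ is presented by a cocycle $(\tilde a_{ji})$ then $\phi\circ\tilde\alpha$ is presented by $(\phi\circ\tilde a_{ji})$, and the coarse differential $d_{[t]}\phi$ acts by these normal isometries, so $\n{(\phi\tilde\alpha)'(t)}_N=\n{\tilde\alpha'(t)}_N$ pointwise; integrating gives $\ell(\phi\circ\tilde\alpha)=\ell(\tilde\alpha)$. Taking infima over the two matched families of stacky curves yields $d_N(\bar\phi[\tilde x],\bar\phi[\tilde y])=d_N([\tilde x],[\tilde y])$.

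The main obstacle I anticipate is making the correspondence $\tilde\alpha\mapsto\phi\circ\tilde\alpha$ genuinely surjective at the level of \emph{good} cocycles while controlling lengths: a stacky curve downstairs is represented by a good cocycle over some cover of $I$, and one must pull it back through the Morita equivalence and refine to a good cocycle upstairs without altering the length. This is exactly the subtlety that the normal-speed formalism and Corollary~\ref{cor:cont-speed-stack} are designed to handle, since length depends only on the equivalence class of the cocycle and is computed from the Morita-invariant normal norm; refinements of covers leave the length integral unchanged. Thus the only real content is the pointwise isometry of normal speeds, which is immediate from the Riemannian Morita hypothesis, and the remaining bookkeeping about cocycles is routine.
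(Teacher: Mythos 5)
Your proposal is correct and follows essentially the same route as the paper's proof: both use that $\phi$ induces a bijection on coarse orbit spaces and a one-to-one, length-preserving correspondence $\tilde\alpha\mapsto\phi\circ\tilde\alpha$ between stacky curves (length preservation coming from the normal isometries in the definition of Riemannian Morita map), and then conclude by the characterization of $d_N$ in Theorem \ref{thm:distance-length}. The paper states this in three sentences; your write-up merely fills in the same steps in more detail.
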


\begin{proof}
The map $M/G\to \tilde M/\tilde G$ is a bijection, and moreover, $\phi$ yields a 1-1 correspondence between stacky curves $\alpha$ in $[\tilde G/\tilde M]$ and $\phi\alpha$ in $[M/G]$. Besides, since $\phi$ is also Riemannian, this correspondence preserves the length of stacky curves, and the corollary follows by Theorem \ref{thm:distance-length}.
\end{proof}

\begin{corollary}
Equivalent metrics on the same Lie groupoid $G\toto M$ yield the same pseudo-distance on the coarse orbit space $M/G$.
\end{corollary}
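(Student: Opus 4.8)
The plan is to deduce this immediately from Corollary \ref{cor:Morita-distance}. The key observation, already recorded in the excerpt, is that two metrics $\eta_1$ and $\eta_2$ on $G\toto M$ are equivalent precisely when the identity furnishes a Riemannian Morita map $\id:(G\toto M,\eta_1)\to(G\toto M,\eta_2)$: the underlying Morita map is in both cases the identity, and the condition that $\eta_1,\eta_2$ induce the same inner product on every normal space $N_xO$ is exactly the requirement that this identity be an isometry on the normal directions. First I would invoke this characterization, and then apply Corollary \ref{cor:Morita-distance} with $\phi=\id$, $\tilde\eta=\eta_1$ and $\eta=\eta_2$. The conclusion is that the induced map on coarse orbit spaces, which here is literally $\id:M/G\to M/G$, preserves the pseudo-distance; that is, the $d_N$ computed from $\eta_1$ coincides with the one computed from $\eta_2$.

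Alternatively, one can argue directly from Theorem \ref{thm:distance-length}, bypassing the corollary. The length of a stacky curve $\alpha$ is the integral of its speed $\n{\alpha'(t)}$, and this speed is computed entirely from the normal norm $\n{\cdot}_N$ on the spaces $N_xO$. Since equivalent metrics induce the same inner product on each $N_xO$, they induce the same normal norm, and hence assign the same length to every stacky curve. Taking the infimum over all stacky curves connecting $[x]$ and $[y]$, Theorem \ref{thm:distance-length} then yields that $d_N([x],[y])$ agrees for $\eta_1$ and $\eta_2$.

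There is essentially no genuine obstacle here: the content is already contained in Theorem \ref{thm:distance-length}, which expresses $d_N$ purely in terms of normal lengths, together with the defining property of equivalent metrics. The only point requiring a moment's care is the verification that being equivalent is the same as the identity being a Riemannian Morita map, but this is immediate from the definitions and is stated explicitly in the text.
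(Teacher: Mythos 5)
Your proposal is correct and follows exactly the paper's intended route: the statement appears there as an immediate corollary of Corollary \ref{cor:Morita-distance}, using precisely the fact (recorded in Section 2.3) that $\eta_1,\eta_2$ are equivalent if and only if the identity $(G\toto M,\eta_1)\xto{\id}(G\toto M,\eta_2)$ is a Riemannian Morita map. Your alternative argument via Theorem \ref{thm:distance-length} is also valid, but it is just the same observation unwound, so no further comment is needed.
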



\begin{corollary}
The pseudo-distance $d_{N}$ on $M/G$ is a Riemannian Morita invariant, it depends only on the Riemannian stack $([M/G],[\eta])$.
\end{corollary}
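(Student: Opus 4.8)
The plan is to reduce the statement to the two preceding corollaries by unwinding the definition of a Riemannian stack as a groupoid presentation taken up to Riemannian Morita equivalence. Concretely, suppose $(G\toto M,\eta)$ and $(G'\toto M',\eta')$ present the same Riemannian stack. By the definition of differentiable stack recalled in Section 2.2, there is a groupoid $H\toto N$ together with Morita maps $\phi:H\to G$ and $\psi:H\to G'$ realizing the identification of orbit spaces, and the agreement of the stacky metrics means precisely that the pulled-back classes coincide on $H$, under the 1-1 correspondence between equivalence classes of metrics established in \cite{dhf2}.

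First I would fix an arbitrary representative $\eta_H$ of this common class on $H$. The key observation is that being a Riemannian Morita map only constrains the inner products on the normal spaces $N_xO$, and these inner products are exactly the data preserved when passing between equivalent metrics; hence $\phi:(H,\eta_H)\to(G,\eta)$ is a Riemannian Morita map for \emph{any} such representative, and likewise $\psi:(H,\eta_H)\to(G',\eta')$ is Riemannian Morita. In other words, a single metric on the apex $H$ can be chosen so as to make both legs of the span Riemannian simultaneously. If one prefers, the corollary asserting that equivalent metrics yield the same pseudo-distance guarantees directly that replacing $\eta_H$ within its class leaves $d_N$ on $N/H$ unchanged, so the choice of representative is immaterial.

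Then I would apply Corollary \ref{cor:Morita-distance} to each leg. Since every Morita map induces a homeomorphism on coarse orbit spaces, we obtain homeomorphisms $\bar\phi:N/H\to M/G$ and $\bar\psi:N/H\to M'/G'$, and the corollary says that each preserves the normal pseudo-distance $d_N$. The canonical identification between the orbit spaces of the two presentations is exactly the composite $\bar\psi\circ\bar\phi^{-1}$, so it too preserves $d_N$, which is the asserted Riemannian Morita invariance.

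I do not anticipate a genuine obstacle: the analytic content has already been packaged into Theorem \ref{thm:distance-length} and Corollary \ref{cor:Morita-distance}, and what remains is a span-chasing argument. The only point that requires care is the one flagged above, namely that the Riemannian Morita property of each leg depends solely on the equivalence class of $\eta_H$; this is immediate from the fact that equivalence of metrics is \emph{defined} by equality of the induced normal inner products, so no compatibility has to be arranged by hand.
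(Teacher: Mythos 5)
Your proposal is correct and follows the same route the paper intends: the paper leaves this as an immediate consequence of Theorem \ref{thm:distance-length} via Corollary \ref{cor:Morita-distance}, namely span-chasing a Riemannian Morita equivalence and applying that corollary to each leg. Your added observation, that the Riemannian Morita property of each leg depends only on the equivalence class of the apex metric (since both notions are defined by the induced normal inner products), is exactly the point that makes the choice of representative harmless, and it matches the paper's second corollary on equivalent metrics.
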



\begin{remark}
If $d_N$ is a distance, then $(M/G,d_N)$ clearly inherits a quotient length structure from $(M, \eta^M)$ (cf. \cite{bbi}, pag 63). The Theorem \ref{thm:distance-length} says that the quotient length structure can be recovered as the length structure given by the (traces of) stacky curves on $[M/G]$. 
\end{remark}


\section{Geodesics on Riemannian stacks}
\label{sec.geodesics}


This is the main section of the paper. We introduce here our notion for stacky geodesics, discuss fundamental examples, and set the existence and uniqueness of geodesics with prescribed initial conditions. We also prove our second main theorem, showing that, on a separated Riemannian stack, geodesics can be characterized as minimizing curves. We finish with our third main theorem, a result on complete Riemannian stacks, a stacky version of Hopf-Rinow. 

\subsection{Definitions and examples}


Recall that a singular foliation $F$ on a Riemannian manifold $(M,\eta)$ is Riemannian if every geodesic that is normal to $F$ at a given time remains normal to $F$ at every time \cite{abt}. We call such a geodesic a {\bf normal geodesic}. In a Riemannian groupoid $(G\toto M,\eta)$ the foliation $F_M$ on $M$ by the orbits and its pullback $F_G=s^*F_M=t^*F_M$ on $G$ are singular Riemannian \cite{dhf1}. 
We say that a curve $a:I\to G$ is a normal geodesic if it is so with respect to $F_G$. If $a(I)\subset M$, this is equivalent to be a normal geodesic for $F_M$ in $M$, for $M\subset G$ is totally geodesic.


Given a Riemannian groupoid $(G\toto M,\eta)$ as before, we consider $[M/G]$ its associated stack, and we define a stacky curve $\alpha: I\to [M/G]$ to be a {\bf stacky geodesic} if it can be represented by a cocycle $\alpha=(a_{ji})$ on which each $a_{ji}:I\to G$ is a normal geodesic. 
$$(I\toto I)\underset{\sim}{\xfrom{\pi}} (\coprod U_{ji}\toto \coprod U_i) \xto{(a_{ji})} (G\toto M)$$
Note that the curves $a_{i}:U_i\to M$ are also normal geodesics of $M$. Every geodesic can be represented by a good cocycle of normal geodesics, and two geodesics are equivalent if they yield the same good cocycle on a common refinement.


\begin{remark}\label{rmk:good-definition}
We will prove as a corollary of Theorem \ref{thm:geodesics-distance} that for separated Riemannian stacks, our main case of interest, the notion of stacky geodesic only depends on the class $[\eta]$, hence being a {\em well-defined} notion.   
For arbitrary Riemannian stacks, we conjecture this is still true.
In any case, our definition still makes sense, as one depending on the groupoid metric $\eta$. 
\end{remark}



\begin{example}
Let $p:M\to B$ be a submersion, $M\times_BM\toto M$ the corresponding submersion groupoid, and $\eta$ a groupoid metric on it, which yields a metric on $B$ making $p$ a Riemannian submersion (cf. Example \ref{ex:submersion3}). 
If $\alpha=(a_{ji}):I\to [M/G]$ is a stacky geodesic, then each $a_i:I\to M$ is a horizontal geodesic and projects to a geodesic on $B$. Conversely, given a geodesic on $B$, the horizontal local lifts are geodesics, and can be used to build a cocycle for a stacky geodesic $(a_{ji}):I\to[M/G]$ 
(cf. Example \ref{ex:submersion4}). 
Thus, our stacky geodesics extend the usual notion for manifolds.
\end{example}

\begin{example} 
Let $G\toto M$ be a proper {\'e}tale Riemannian groupoid and $O=[M/G]$ its orbit Riemannian orbifold (cf. Example \ref{ex:orbifold2}). An orbifold geodesic $\alpha:I\to O$ is classically defined as a continuous curve $\alpha:I\to |O| =M/G$ that locally lifts to a smooth geodesic $a_i:J\to U_i$ into some Riemannian orbifold chart $(U_i, G_i, \phi)$.
This ad hoc definition for orbifolds matches our general approach, for the local lifts $(a_i)$ provide a cocycle for a stacky geodesic $I\to [M/G]$ at the level of objects, and it can always be extended at the level of arrows (see Corollary \ref{cor:merge-geodesics}).
\end{example}

\begin{example}\label{ex:action5}
For $K \action (M,\eta^M)$ an isometric action, we have seen how to extend $\eta^M=\eta^{(0)}$ to a groupoid metric $\eta$ on the action groupoid $G\toto M$ using a right invariant metric $\eta^K$ of $K$ (cf. Example \ref{ex:action3}).
By construction, $\eta^{(1)}$ on $G=K\times M$ is such that the following are Riemannian submersions.
$$\xymatrix{
(K\times K\times M,\eta^K\times\eta^K\times\eta^M) \ar[rr]^{\pi_2} 
\ar[d]^{(k_2,k_1,x)\mapsto (k_2k_1^{-1},k_1x)}
& &  (K\times M,\eta^K\times\eta^M) \ar[d]^{\pi}\\
(G,\eta^{(1)})\ar[rr]^s& &(M,\eta^M)}$$
By diagram chasing normal geodesics on previous square, we can see that a normal geodesic $a:I\to G$ must be of the form $a=(k,b)$ with $k\in K$ constant and $b$ a normal geodesic in $M$. 
Thus, in a stacky geodesic $\alpha=(a_{ji}):I\to [M/G]$, each $a_i$ gives a normal geodesic in $M$, and each $a_{ji}$ gives a $k_{ji}\in K$ such that $k_{ji}.a_i\r{U_{ji}}=a_j\r{U_{ji}}$. Since the $K$ action is by isometries, we conclude that any such $\alpha$ can be presented by a single normal geodesic $a:I\to M$ (cf. Example \ref{ex:action4}, compare with \cite{aalm}).
\end{example}

\begin{example}
Analogous to Example \ref{ex:foliation4}, a Riemannian foliation $(M,F,\eta^M_0)$ can be described by Riemannian submersions $f_i:(V_i,\eta^{V_i})\to(W_i,\eta^{W_i})$ with $V_i\subset M$ and isometries 
$\gamma_{ji}:W_i\to W_j$ 
satisfying $f_j=\gamma_{ji}f_i$ whenever $V_{ji}\neq\emptyset$.
Using the holonomy groupoid $(\hol(F)\toto M,\eta)$ to make sense of $[M/F]$ (cf. Example \ref{ex:foliation3}), and fixing the defining Riemannian submersions $f_i:(V_i,\eta^{V_i})\to (W_i,\eta^{W_i})$, we can present a stacky geodesic $\alpha: I \to [M/F]$ by normal geodesics $a_i:U_i\to V_i$ satisfying 
$f_{j}a_{j}=\gamma_{j,i}f_ia_i$. 
The transversal data of each segment $a_i$ is captured by the geodesics $b_i:U_i\to W_i$. It follows that a geodesic on the leaf space is the same as a family of geodesics $b_i:U_i\to(W_i,\eta^{W_i})$ that are glued by the defining cocycle $\gamma_{i+1,i}$ (compare with \cite{aj}).
\end{example}


Geodesic on manifolds are defined infinitesimally in terms of the Levi-Civita connection. Since we have not developed stacky connections, we are not in conditions to prove such an infinitesimal description. Still, we can prove that our notion is local, and this will be useful later.

\begin{lemma}\label{lemma:local-geodesic}
A stacky curve $\alpha:I\to [M/G]$ is a stacky geodesic if and only if for every $t\in I$ there is a smaller interval $t\in J\subset I$ such that $\alpha|_J$ is a stacky geodesic.
\end{lemma}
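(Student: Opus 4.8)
The plan is to treat the two implications separately, the forward one being immediate and the converse carrying all the content. For the forward direction I would note that if $\alpha$ is a stacky geodesic, represented by a good cocycle $(a_{ji})$ of normal geodesics, then restricting to any subinterval $J$ yields a cocycle whose entries are restrictions of normal geodesics, hence again normal geodesics; so $\alpha|_{J}$ is a stacky geodesic. This settles the ``only if'' part.

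For the converse I would first reduce to single-curve local models. Since $I\subset\R$, I would choose a locally finite cover of $I$ by open intervals $\{J_k\}_k$ indexed so that $J_k\cap J_{k+1}\neq\emptyset$ and $J_k\cap J_l=\emptyset$ for $|k-l|\geq2$ (a good $1$-dimensional cover), with each $J_k$ small enough that $\alpha|_{J_k}$ is a stacky geodesic and, after shrinking inside a single chart of its defining cover, is presented by a single normal geodesic $b_k:J_k\to M$, automatically orthogonal to $F_M$. To assemble the $b_k$ into one global good cocycle representing $\alpha$, the only data I must still produce are transition arrows $g_k:J_k\cap J_{k+1}\to G$ with $sg_k=b_k$, $tg_k=b_{k+1}$ that are themselves normal geodesics of $G$; because there are no triple overlaps the cocycle condition is vacuous.

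The heart of the argument is the construction of these transitions, and this is the step I expect to be the main obstacle: a priori the cocycle isomorphism identifying $b_k$ and $b_{k+1}$ over the overlap is an arbitrary arrow-valued curve, and I must upgrade it to a normal geodesic. Fixing $t_0\in J_k\cap J_{k+1}$, I would take an arrow $g_0$ realizing that isomorphism at $t_0$, so $s(g_0)=b_k(t_0)$, $t(g_0)=b_{k+1}(t_0)$, and, by well-definedness of the stacky velocity $\alpha'(t_0)$, $g_0\cdot[b_k'(t_0)]=[b_{k+1}'(t_0)]$. I would then \emph{define} $g_k$ to be the normal geodesic of $G$ (with respect to $F_G$) with $g_k(t_0)=g_0$ and initial velocity the $s$-horizontal lift of $b_k'(t_0)$. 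The key observation is that, since $F_G=s^*F_M=t^*F_M$, one has $\ker ds,\ker dt\subseteq TF_G$, so every vector orthogonal to $F_G$ is simultaneously horizontal for both Riemannian submersions $s,t$ and is carried by $ds,dt$ into $(TF_M)^\perp$. Hence $g_k$, being orthogonal to $F_G$, projects under \emph{both} $s$ and $t$ to normal geodesics of $M$. By construction $sg_k$ has initial conditions $(b_k(t_0),b_k'(t_0))$, so $sg_k=b_k$ by uniqueness of geodesics in $M$; and $tg_k$ starts at $b_{k+1}(t_0)$ with velocity $d_{g_0}t(w)$, where $w$ is the $s$-horizontal lift of $b_k'(t_0)$, which by the very definition of the normal representation is the orthogonal representative of $g_0\cdot[b_k'(t_0)]=[b_{k+1}'(t_0)]$, hence equals $b_{k+1}'(t_0)$; thus $tg_k=b_{k+1}$, again by uniqueness.

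Finally I would conclude that the data $(b_k,g_k)$ form a good cocycle of normal geodesics whose restriction to each $J_k$ is $\alpha|_{J_k}$, so by the gluing property of stacky maps it represents $\alpha$, which is therefore a stacky geodesic. The delicate points to watch are that the horizontally lifted $g_k$ genuinely implements the stacky identification over the \emph{whole} overlap and not merely at $t_0$ (which follows once $tg_k$ and $b_{k+1}$ are shown to agree to first order at $t_0$, by uniqueness), and the two orthogonality/horizontality claims, both of which rest on the listed structural properties of a Riemannian groupoid, namely that $s,t$ are Riemannian submersions and $F_G=s^*F_M=t^*F_M$.
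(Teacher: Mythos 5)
Your proposal is correct and takes essentially the same route as the paper's proof: the forward direction by restricting a good cocycle of normal geodesics, and the converse by merging local single-curve presentations via the key step of producing a normal geodesic $c$ in $G$ through a connecting arrow $g_0$ with initial velocity the $s$-horizontal lift of $b_k'(t_0)$, then using $F_G=s^*F_M=t^*F_M$ and uniqueness of geodesics to get $sc=b_k$ and $tc=b_{k+1}$. You merely spell out the verification that the paper compresses into ``the geodesic with initial condition $v$ will serve as $c$''.
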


\begin{proof}
A restriction of a geodesic is clearly a geodesic, for a cocycle of normal geodesics restricts to another of the same type. 
For the converse, given a stacky curve $\alpha:I\to [M/G]$ that is locally a geodesic, and therefore it can be locally presented by good cocycles of normal geodesics, we want to merge them into a good cocycle of normal geodesics for $\alpha$.
The key step is, 
given $a:I\to M$ and $b:J\to M$ normal geodesics and given $t\in I\cap J$ such that $a(t)$ and $b(t)$ are in the same orbit $O$ and $[a'(t)]=[b'(t)]$ in $NO/NG_O$, to find a normal geodesic $c:(t-\epsilon,t+\epsilon)\to G$ such that $sc=a$ and $tc=b$. From $[a'(t)]=[b'(t)]$ we get that there is some arrow $b(t)\xfrom g a(t)$ such that $g[a'(t)]=[b'(t)]$ \cite[3.5.1]{dh}. Then, if $v\in T_gG$ is the horizontal lift of $a'(t)$ to $g$ via the source, the geodesic with initial condition $v$ will serve as $c$.
%
%
\end{proof}


\begin{corollary}\label{cor:merge-geodesics}
If $\alpha:I\to [M/G]$ and $\beta:J\to [M/G]$ are stacky geodesics on a separated Riemannian stack and there is $t_0\in I\cap J$ such that $\alpha(t_0)=\beta(t_0)$ and $\alpha'(t_0)=\beta'(t_0)$, then they can be glued into a geodesic $\xi: I\cup J\to [M/G]$ that extends both.
\end{corollary}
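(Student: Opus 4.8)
The plan is to reduce the corollary to a uniqueness statement on the overlap and then glue by descent. First, note that $I\cap J$ is an interval containing $t_0$, hence connected, and $I\cup J$ is again an interval. Since being a stacky geodesic is a local property (Lemma~\ref{lemma:local-geodesic}) and stacky maps satisfy descent along covers of the domain, it is enough to produce an isomorphism of stacky curves $\alpha|_{I\cap J}\cong\beta|_{I\cap J}$: gluing $\alpha$ on $I$ to $\beta$ on $J$ along such an isomorphism yields a single stacky curve $\xi\colon I\cup J\to[M/G]$, and $\xi$ is a geodesic because near every point it coincides with $\alpha$ or with $\beta$. So the whole problem becomes the uniqueness claim that two stacky geodesics sharing position and velocity at one point of a connected interval agree on that interval.

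To prove uniqueness I consider the set $A$ of those $t\in I\cap J$ near which $\alpha\cong\beta$, which is open by construction, and I show it is nonempty and closed. For nonemptiness (and, with the same argument, agreement near each point of $A$), I present $\alpha$ and $\beta$ near $t_0$ by normal geodesics $a,b$ in $M$. The hypotheses give that $a(t_0),b(t_0)$ lie on a common orbit and $[a'(t_0)]=[b'(t_0)]$ in the coarse tangent space, so there is an arrow \smash{$b(t_0)\xfrom{g}a(t_0)$} with $g\cdot[a'(t_0)]=[b'(t_0)]$ \cite{dh}. Running the key construction of Lemma~\ref{lemma:local-geodesic}, the geodesic $c$ in $G$ issuing from the horizontal $s$-lift of $a'(t_0)$ at $g$ is a normal geodesic with $sc=a$. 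The decisive observation is that a normal geodesic is \emph{orthogonal} to the orbits, so its honest velocity is pinned down by its normal projection; hence from $[(tc)'(t_0)]=g\cdot[a'(t_0)]=[b'(t_0)]$ together with $(tc)'(t_0),\,b'(t_0)\perp T_{b(t_0)}O$ I conclude $(tc)'(t_0)=b'(t_0)$ and therefore $tc=b$. Thus $c$ is a genuine connecting geodesic and witnesses $\alpha\cong\beta$ on a neighbourhood of $t_0$.

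For closedness, which is where separatedness enters, let $t_*\in\overline{A}\cap(I\cap J)$ and pick $t'\in A$ near $t_*$, with single normal-geodesic representatives $a,b$ valid on a neighbourhood of $t_*$. Near $t'$ the agreement is witnessed by a connecting geodesic $c'$ in $G$, i.e.\ $c'(t)$ is an arrow $a(t)\to b(t)$; I extend $c'$ as a geodesic of $G$ towards $t_*$. Throughout its domain $sc'=a$ and $tc'=b$ by uniqueness of geodesics in $M$, so $c'$ stays inside the set $(s,t)^{-1}\big(\{(a(\tau),b(\tau)):\tau\text{ between }t'\text{ and }t_*\}\big)$, which is compact because $G\toto M$ is proper. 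A geodesic trapped in a compact set extends past $t_*$, so $\alpha\cong\beta$ near $t_*$ and $t_*\in A$. Hence $A=I\cap J$, the connecting geodesics assemble into the required isomorphism on the overlap, and the gluing above finishes the proof. The main obstacle is precisely this closedness step: without properness the connecting arrows could escape to infinity, the agreement would not propagate beyond $t_0$, and the two geodesics could fail to glue — which is exactly why the hypothesis that the stack be separated is indispensable.
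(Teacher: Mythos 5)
You have correctly identified the real content of this corollary, which the paper itself compresses into a single line (``it follows from the key step described in previous lemma''): to glue $\alpha$ and $\beta$ one needs an isomorphism $\alpha|_{I\cap J}\cong\beta|_{I\cap J}$ over the \emph{whole} overlap, not merely near $t_0$. Your openness/nonemptiness step is exactly the paper's key step from Lemma~\ref{lemma:local-geodesic}, including the observation that the velocity of a normal geodesic is pinned down by its normal projection, which is what upgrades $[(tc)'(t_0)]=[b'(t_0)]$ to $(tc)'(t_0)=b'(t_0)$ and hence $tc=b$. Your closedness step --- trapping the connecting geodesic in the preimage under $(s,t)$ of a compact set and invoking the escape lemma --- is precisely the argument that the paper records separately as Proposition~\ref{prop:glob.uniq.geodesics} and then feeds into Corollary~\ref{cor:global-uniqueness}; you have merged these ingredients into a self-contained connectedness argument, and you correctly locate properness as the hypothesis that makes agreement propagate (the paper's example of the line with two origins shows it is indispensable).

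There is, however, one genuine gap: the closing claim that ``the connecting geodesics assemble into the required isomorphism on the overlap.'' Your set $A$ only records that near each point \emph{some} witness exists, and local natural isomorphisms between two fixed stacky curves do not automatically agree where they overlap: two witnesses $c$ and $\tilde c$ over the same interval can differ by a section of the isotropy, $\tilde c(t)=c(t)h(t)$ with $h(t)\in G_{a(t)}$, so they cannot simply be concatenated when the overlap $I\cap J$ needs several charts. An extra patching argument is required: given witnesses $c_0$ over $U_0$ and $c_1$ over $U_1$, the defect $h=c_1^{-1}c_0$ on $U_0\cap U_1$ is an isotropy-valued curve which is again a normal geodesic in $G$ (products and inverses of normal geodesics are normal geodesics), hence it extends over all of $U_1$ by your own compactness argument, and replacing $c_1$ by $c_1h$ makes consecutive witnesses agree; iterating along a good cover of $I\cap J$ then produces the single natural transformation needed for descent. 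This correction is exactly what the paper performs explicitly in the proof of Corollary~\ref{cor:global-uniqueness}, where the arrows $g_1=b_{10}(t_1)c_0(t_1)a_{10}(t_1)^{-1}$ are introduced and naturality on $U_{10}$ is verified via uniqueness of normal geodesics. With that step added your proof is complete; as written, the final sentence does not follow.
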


\begin{proof}
It follows from the key step described in previous lemma.
%
\end{proof}

\subsection{Existence and uniqueness}


Let $(G\toto M,\eta)$ be a Riemannian Lie groupoid. Given $[x]$ in $M/G$ and $[v]\in T_{[x]}[M/G]$, in this section we deal with the questions of whether there exists a stacky geodesic $\alpha:I\to[M/G]$ passing through $[x]$ with velocity $[v]$, and in case it exists, whether it is unique. 

\begin{lemma}
For every $[x]\in M/G$ and $[v]\in T_{[x]}[M/G]$ there exists a stacky geodesic $\alpha:(-\epsilon,\epsilon)\to[M/G]$ such that $\alpha(0)=[x]$ and $\alpha'(0)=[v]$. If $\beta:(-\epsilon,\epsilon)\to[M/G]$ is another stacky geodesic with $\beta(0)=[x]$ and $\beta'(0)=[v]$, then $\alpha|_{(-\epsilon',\epsilon')}=\beta|_{(-\epsilon',\epsilon')}$ for some $\epsilon'>0$. 
\end{lemma}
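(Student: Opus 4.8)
The plan is to reduce both statements to the ordinary geodesic flow on the manifold of objects, exploiting that the orbit foliation $F_M$ is singular Riemannian, and then to transport everything through the cocycle formalism by invoking the key step already isolated in the proof of Lemma \ref{lemma:local-geodesic}.

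For existence, I would first unwind the data. An element $[v]\in T_{[x]}[M/G]=N_xO/G_x$ lifts in two stages: choose a representative $x\in M$ of the orbit, a representative $v\in N_xO=T_xM/T_xO$ of $[v]$, and then the unique $\tilde v\in T_xM$ orthogonal to $T_xO$ that projects to $v$. Let $a:(-\epsilon,\epsilon)\to M$ be the geodesic of $(M,\eta^{(0)})$ with $a(0)=x$ and $a'(0)=\tilde v$. Since $\tilde v\perp T_xO$ and $F_M$ is Riemannian, a geodesic that is normal to an orbit at one time stays normal at every time, so $a$ is a normal geodesic. The single curve $a$ is then a good cocycle, defining a stacky geodesic $\alpha$ with $\alpha(0)=[x]$ and $\alpha'(0)=[a'(0)]=[v]$.

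For uniqueness I would work locally. Given a second stacky geodesic $\beta$ with the same initial data, near $0$ it is presented by a single normal geodesic $b:(-\epsilon,\epsilon)\to M$, with $b(0)\in O_x$ and $[b'(0)]=[v]$ in the coarse tangent space. Thus $a(0)$ and $b(0)$ lie in the same orbit $O$ and $[a'(0)]=[b'(0)]$ in $NO/NG_O$, which is exactly the hypothesis of the key step in the proof of Lemma \ref{lemma:local-geodesic}. Applying it at $t=0$ produces a normal geodesic $c:(-\epsilon',\epsilon')\to G$ with $sc=a$ and $tc=b$. Since the source groupoid $I\toto I$ has only identity arrows, the naturality condition $f'(g)h_x=h_yf(g)$ is vacuous, so $c$ is a natural isomorphism between the cocycles $a$ and $b$ over the one-chart cover of $(-\epsilon',\epsilon')$; hence $\alpha$ and $\beta$ agree on $(-\epsilon',\epsilon')$.

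The heavy lifting, namely producing the connecting arrow-curve $c$ via a horizontal lift of $a'(0)$ and the groupoid geodesic flow, is already carried out in Lemma \ref{lemma:local-geodesic}, so I expect no serious obstacle. The one point demanding care is matching the abstract equality $\alpha'(0)=\beta'(0)$ in $T_{[x]}[M/G]$ with the concrete statement that $a(0),b(0)$ lie in a common orbit and $[a'(0)]=[b'(0)]$ in $NO/NG_O$; this is precisely the identification $T_{[x]}[M/G]\cong NO/NG_O$ recorded when the coarse tangent space was defined, together with the fact that along an orbit the pointwise normal representations and conjugations assemble into the local model groupoid. For existence, the essential input is the orthogonality of the chosen lift $\tilde v$, without which $a$ would fail to be a normal geodesic.
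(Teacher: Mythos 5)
Your proposal is correct and follows essentially the same route as the paper: existence via the geodesic of $(M,\eta^{(0)})$ with orthogonal initial velocity (normal for all time since $F_M$ is Riemannian), and uniqueness by producing an arrow $g$ with $g\cdot[a'(0)]=[b'(0)]$ and taking the geodesic of $G$ through $g$ with initial condition the horizontal source-lift of $a'(0)$, which is exactly the key step of Lemma \ref{lemma:local-geodesic} that the paper reproduces inline. The only cosmetic difference is that you cite that key step explicitly rather than rewriting it, which is a perfectly acceptable packaging of the same argument.
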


\begin{proof}
Pick $x\in M$ representing $[x]$ and $v\in T_xM$ that is normal to the orbit $O_x$ and represents $[v]$. 
Then the geodesic $a:(\epsilon,\epsilon)\to M$ on $M$ with $a(0)=x$ and $a'(0)=v$ is normal and represents a stacky geodesic $\alpha':(-\epsilon,\epsilon)\to[M/G]$ with the requested initial conditions. 

Regarding uniqueness, and working locally, we can represent $\alpha,\beta$ by normal geodesics $a,b:I\to M$. Since $[a'(0)]=[b'(0)]\in T_{[x]}[M/G]$, we can pick an arrow \smash{$b(0)\xfrom{g}a(0)$} such that $g\cdot [a'(0)]=[b'(0)]$,
consider $v\in T_gG$ the normal lift of $a'(0)$ along the source map, 
and take $c: (-\epsilon',\epsilon')\to G$ the geodesic of $G$ with $c(0)=g$ and $c'(0)=v$. Then $c$ yields an isomorphism between the restrictions of $a$ and $b$. 
\end{proof}


While local existence and uniqueness follow straightforward from the properties of geodesics on manifolds, the global existence is more subtle, and does not hold for every Riemannian groupoid, as we can see in next example.

\begin{example}
Let $F$ be the foliation in $M=\R^2\setminus\{0\}$ given by the vertical lines, and consider its holonomy groupoid $\hol(F)\toto \R^2\setminus\{0\}$. The standard metric on $M$ can be extended to a groupoid metric (cf. Example \ref{ex:foliation3}). Then $[M/F]$ identifies with the non-Hausdorff real line with two origins, and the normal geodesics $a(t)=(t-1,1)$ and $b(t)=(t-1,-1)$ in $M=\R^2\setminus \{0\}$ are not isomorphic, for they do not define the same map on the orbit spaces. Then they yield different stacky geodesics with the same initial conditions. 
\end{example}


We will see that global uniqueness of geodesics holds when working with separated Riemannian stacks. Next proposition will help us to build isomorphisms between curves on a proper groupoid.

\begin{proposition}\label{prop:glob.uniq.geodesics}
Suppose $G\toto M$ is proper. If $a,b: I \to G$ are normal geodesics and $b(0)\xfrom{g}a(0)$ is an arrow with $g\cdot [a'(0)]=[b'(0)]$, then the normal geodesic $c$ on $G$ lifting $a$ to $g$ via the source map is defined over the whole $I$, and it gives an isomorphism between $a$ and $b$.
\end{proposition}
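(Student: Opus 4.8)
The plan is to construct $c$ as a horizontal lift of $a$, verify on its domain of definition that it is the asserted isomorphism, and then use properness to force that domain to be the whole of $I$. First I would set up the local lift. Since $s\colon G\to M$ is a Riemannian submersion and $F_G=s^{*}F_M$, the horizontal subspace $(\ker d_gs)^{\perp}$ maps isometrically onto $T_{a(0)}M$ and sends normal-to-$F_G$ vectors to normal-to-$F_M$ vectors. Hence the vector $v\in T_gG$ that horizontally lifts $a'(0)$ is normal to $F_G$, and the geodesic $c$ determined by $c(0)=g$, $c'(0)=v$ is a normal geodesic because $F_G$ is Riemannian. By O'Neill's theorem a geodesic with horizontal initial velocity stays horizontal and projects to a geodesic, so $s\circ c$ and $a$ are geodesics of $M$ with the same initial point and velocity $d_gs(v)=a'(0)$; thus $s\circ c=a$ wherever $c$ is defined, i.e. $c$ is exactly the source-lift described in the statement.

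Next I would identify $t\circ c$ with $b$. Because $F_G=s^{*}F_M=t^{*}F_M$ and $t$ is also a Riemannian submersion, the roles of $s$ and $t$ are symmetric, so the same reasoning shows that $c'$ stays horizontal for $t$ as well and that $t\circ c$ is a normal geodesic of $M$ starting at $t(g)=b(0)$ with initial velocity $d_gt(v)$. By the definition of the normal representation (cf. \cite[3.5.1]{dh}), taking $w=v$ with $d_gs(v)=a'(0)$ gives $[d_gt(v)]=g\cdot[a'(0)]=[b'(0)]$ in $N_{b(0)}O$. Now both $d_gt(v)$ and $b'(0)$ lie in $(T_{b(0)}O)^{\perp}$ and have equal image under the isomorphism $(T_{b(0)}O)^{\perp}\xrightarrow{\sim} N_{b(0)}O$, so in fact $d_gt(v)=b'(0)$. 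Uniqueness of geodesics in $M$ then yields $t\circ c=b$ on the domain of $c$, and since the unit groupoid $I\toto I$ has only identities, the naturality condition is vacuous; hence on its domain $c$ is a natural isomorphism from $a$ to $b$.

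Finally, and this is the crux, I would show that $c$ is defined on all of $I$. Let $J\subseteq I$ be the maximal subinterval on which the lift exists; local existence of geodesics makes $J$ open in $I$, and I claim properness makes it closed. Fix an endpoint $T$ of $J$ lying in the interior of $I$ and a sequence $t_n\to T$ in $J$. Because $s\circ c=a$ and $t\circ c=b$ are defined and continuous on all of $I$, the points $(s,t)(c(t_n))=(a(t_n),b(t_n))$ converge to $(a(T),b(T))$ and so lie in a compact subset of $M\times M$. Since $G\toto M$ is proper, $(s,t)\colon G\to M\times M$ is proper, so the $c(t_n)$ stay in a compact subset of $G$ and, after passing to a subsequence, converge to some $g_T\in G$. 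As $c$ has constant speed $\n{v}$, the velocities $c'(t_n)$ lie in a fixed-radius sphere bundle over a compact set and subconverge to some $w\in T_{g_T}G$; by continuity of the geodesic flow the geodesic with initial data $(g_T,w)$ agrees with $c$ near $T$ and extends it past $T$, contradicting the maximality of $J$ unless $T$ is an endpoint of $I$. Therefore $J=I$. The main obstacle is precisely this last step: without properness the lift $c$ could escape to infinity in finite time even though $a$ and $b$ are globally defined, and properness of $(s,t)$ is exactly what confines $c$ to a compact set and rules out such incompleteness while leaving its geodesic character untouched.
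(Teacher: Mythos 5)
Your proof is correct and follows essentially the same route as the paper's: construct $c$ as the source-horizontal geodesic lift, observe that it is simultaneously an $s$-lift of $a$ and a $t$-lift of $b$ (the paper delegates exactly this to the key step of Lemma \ref{lemma:local-geodesic}), and then use properness of $(s,t):G\to M\times M$ together with the constant speed of $c$ to trap $(c(t_n),c'(t_n))$ in a compact set and extend $c$ past any interior endpoint by local existence of the geodesic flow, contradicting maximality. The only difference is presentational: the paper packages the compactness step as an explicit compact set $K$ of unit tangent vectors over arrows with source in $a[t_{max}-\epsilon,t_{max}]$ and target in $b[t_{max}-\epsilon,t_{max}]$, while you argue by subsequences, but the underlying ideas coincide.
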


\begin{proof}
By reparametrizating if necessary we can assume that $a$ and $b$, and therefore $c$, are geodesics with speed 1. 
Let $(t_{min},t_{max})$ be the maximal domain of the geodesic $c$.
Note that $c$ restricted to $I\cap(t_{min},t_{max})$ is both an $s$-lift of $a$ and a $t$-lift of $b$, for $s$ and $t$ are Riemannian submersions, and $c$ is orthogonal to the fibers (see key step of Lemma \ref{lemma:local-geodesic}).
We want to show that $I\subset (t_{min},t_{max})$.
Suppose otherwise that $t_{max}\in I$, the other case $t_{min}\in I$ is analogous.
Consider a sequence $t_{n}\nearrow t_{max}$ so
$\{(c(t_n),c'(t_n))\}$ is contained in the compact
$$K=\{v\in T_gG: s(g)\in a[t_{max}-\epsilon,t_{max}], t(g)\in b[t_{max}-\epsilon,t_{max}], \n{v}=1\}.$$
Then there is a convergent subsequence $(c(t_{n_k}),c'(t_{n_k}))\to (g_0,v_0)$. 
By the local existence of the geodesic flow on the manifold $G$, there is a neighborhood $W$ of $(g_0,v_0)$ in $TG$ such that every geodesic with initial condition in $W$ is defined over the same interval $(-\delta,\delta)$. For $k$ sufficiently large $(c(t_{n_k}),c'(t_{n_k}))\in W$ and $t_{max}-t_{n_k}<\delta$, thus we can extend $c$ to the interval $(t_{min},t_{n_k}+\delta)$, contradicting the maximality of $t_{max}$.
\end{proof}


We can derive now the global uniqueness of geodesics on separated Riemannian stacks.

\begin{corollary}\label{cor:global-uniqueness}
If $\alpha,\beta:I\to [M/G]$ are stacky geodesics on a separated Riemannian stack with the same initial data and defined over the same interval then $\alpha=\beta$.
\end{corollary}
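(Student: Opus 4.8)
The plan is to run a connectedness argument on the interval $I$, with Proposition \ref{prop:glob.uniq.geodesics} supplying the essential globalization step. Since $[M/G]$ is separated, I may assume the presenting groupoid $G\toto M$ is proper. Fix the point $t_0\in I$ at which $\alpha$ and $\beta$ share their initial data, and consider
$$S=\{t\in I:\ \alpha \text{ and } \beta \text{ restrict to isomorphic cocycles on an open neighborhood of } t\}.$$
I want to show $S$ is nonempty, open and closed; since $I$ is connected this forces $S=I$, and the resulting local isomorphisms, being rigidly determined normal-geodesic lifts, patch together into a single isomorphism of cocycles realizing the equality $\alpha=\beta$ of stacky curves.

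That $t_0\in S$ is exactly the local uniqueness clause of the preceding Lemma: two stacky geodesics sharing their initial data at $t_0$ agree on a smaller interval. Openness of $S$ is immediate, since the defining property is stated on an open neighborhood and local agreement of two geodesics persists on subintervals. The whole content is therefore in the closedness of $S$.

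For closedness I would take $t_\infty$ in the closure of $S$ and choose, on a small interval $J\ni t_\infty$ contained in a single chart of each good cocycle, normal geodesics $a,b:J\to M$ representing $\alpha|_J$ and $\beta|_J$. Picking $t_1\in S\cap J$ near $t_\infty$, the local isomorphism at $t_1$ provides an arrow $b(t_1)\xfrom{g}a(t_1)$ in $G$ with $g\cdot[a'(t_1)]=[b'(t_1)]$. Reparametrizing so that $t_1$ plays the role of the base point, Proposition \ref{prop:glob.uniq.geodesics} applies and produces a normal geodesic $c:J\to G$, defined over \emph{all} of $J$, which is an isomorphism between $a$ and $b$. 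Hence $\alpha$ and $\beta$ are isomorphic on the neighborhood $J$ of $t_\infty$, so $t_\infty\in S$.

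The main obstacle is precisely this closedness step, and it is where separatedness is indispensable. A priori the lifting geodesic $c$ could fail to be defined on all of $J$, and ruling this out is exactly the content of Proposition \ref{prop:glob.uniq.geodesics}, whose proof uses properness of $G$ to confine the pairs $(c(t),c'(t))$ to a compact set of unit vectors and thereby extend $c$ across the endpoint. That this hypothesis cannot be dropped is confirmed by the non-Hausdorff line with two origins, where the conclusion fails, so no argument bypassing properness can succeed.
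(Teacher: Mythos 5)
Your scaffolding --- connectedness of $I$, with Proposition \ref{prop:glob.uniq.geodesics} as the engine --- is sound, and your closedness step is carried out correctly; in spirit this is close to the paper's argument, which runs an induction over the charts of a good cover rather than an open-closed argument. But there is a genuine gap at your very last step, and it is exactly where the paper's proof does its real work. Your set $S$ only records that $\alpha$ and $\beta$ are \emph{locally} isomorphic near each $t$, so $S=I$ hands you a family of natural isomorphisms defined on the intervals of some cover of $I$, each one constructed from an \emph{independently chosen} arrow $g$ at some point of $S$. Equality of stacky curves, however, demands a single global isomorphism of cocycles, and local isomorphisms on overlapping intervals need not agree: whenever several arrows implement the same identification of normal directions (e.g.\ ineffective isotropy, as in a trivial $\Z_2$-action), two such lifts on an overlap differ by a nontrivial automorphism of the underlying curve. ``Rigidity'' does not rescue you here --- each local lift is rigidly determined by \emph{its own} initial arrow, but nothing forces those initial arrows to be compatible, and correcting the next lift requires extending an automorphism across a whole chart, which is itself another application of Proposition \ref{prop:glob.uniq.geodesics}, not a formality.

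Closing this gap is precisely the content of the paper's chart-by-chart induction: having fixed $c_0$ on $U_0$, the isomorphism on $U_1$ is not produced from an arbitrary arrow but from the corrected one $g_{1}=b_{10}(t_{1})c_0(t_{1})a_{10}(t_{1})^{-1}$, and the compatibility $b_{10}(t)c_{0}(t)=c_1(t)a_{10}(t)$ on all of $U_{10}$ is then deduced from the facts that products and inverses of normal geodesics are normal geodesics and that geodesics with the same initial conditions coincide. Your proof can be repaired along the same lines --- for instance, redefine $S$ as the set of $t$ such that one \emph{fixed} initial arrow $g_0$ at $t_0$ extends to an isomorphism of cocycles over the closed interval from $t_0$ to $t$; then openness, instead of being tautological, absorbs the correction-and-extension argument, and closedness follows from uniqueness of the rigid extensions. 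As written, though, the assertion that the local isomorphisms ``patch together into a single isomorphism of cocycles'' is unproved, and it is the crux of the corollary.
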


\begin{proof}
By refining coverings if necessary, we can assume that $\alpha$ and $\beta$ are given by good cocycles $(a_{ji})$, $(b_{ji})$ defined over the same good covering $\{ U_i\}$, and that $t_0\in U_0$. 
Since $\alpha'(t_0)=\beta'(t_0)$, we can pick an arrow 
$b_0(t_0)\xfrom{g_0}a_0(t_0)$ such that $g\cdot a'_0(t_0)=b'_0(t_0)$, and by Proposition \ref{prop:glob.uniq.geodesics}, we can promote $g_0$ to a natural isomorphism $c_0: U_0\to G$ between $a_0$ and $b_0$.
Choosing a time $t_{1}$ in $U_{10}$, we can define 
$g_{1}=b_{10}(t_{1})c_0(t_{1})a_{10}(t_{1})^{-1}$, so as to make the following commutative:
$$\xymatrix@R=10pt@C=10pt{
& a_0(t_{1}) \ar[dl]_{c_0(t_{1})} \ar[dr]^{a_{10}(t_{1})} & \\
b_0(t_{10}) \ar[dr]_{b_{10}(t_{1})} & & a_1(t_{1}) \ar[dl]^{g_{1}}\\
& b_1(t_{1}) & }$$
Note that $g_{1}\cdot a'_1(t_{1})=b'_1(t_{1})$, so we can apply again Proposition \ref{prop:glob.uniq.geodesics} to promote $g_{1}$ to a natural isomorphism $c_1:U_1\to G$ between $a_1$ and $b_1$. 
Moreover, $b_{10}(t)c_{0}(t)=c_1(t)a_{10}(t)$ for all $t\in U_{10}$, for the product and inverse of normal geodesics is again a normal geodesic, and both sides have the same initial conditions at $t_{1}$.
By iterating this argument, we get a sequence of arrows $g_{i}$, promote them to natural isomorphisms $c_i:a_i\cong b_i$, for any $i\geq 0$, and analogously for negatives $i$. The collections of $c_i$ are a natural isomorphism, showing that the cocycles $(a_{ji})$ and $(b_{ji})$ are equivalent.
\end{proof}

Finally, combining Corollaries \ref{cor:merge-geodesics} and \ref{cor:global-uniqueness}, we get:

\begin{corollary}
Given $[M/G]$ a separated Riemannian stack , $[x]\in M/G$ and $[v]\in T_{[x]}[M/G]$, there is a unique maximal stacky geodesic $\alpha: I\to [M/G]$ such that $\alpha(0)=[x]$ and $\alpha'(0)=[v]$. 
\end{corollary}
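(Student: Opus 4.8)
The plan is to obtain the maximal geodesic by patching together \emph{all} stacky geodesics sharing the prescribed initial data, relying on the local existence lemma above for nonemptiness, on global uniqueness (Corollary~\ref{cor:global-uniqueness}) for compatibility on overlaps, and on the gluing result (Corollary~\ref{cor:merge-geodesics}) together with the locality of the geodesic condition (Lemma~\ref{lemma:local-geodesic}) to guarantee that the patched curve is again a stacky geodesic.

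First I would fix a representative $x\in M$ of $[x]$ and $v\in T_xM$ normal to $O_x$ representing $[v]$, and let $\mathcal{C}$ be the set of all stacky geodesics $\gamma\colon I_\gamma\to[M/G]$ defined on an open interval $I_\gamma\ni 0$ with $\gamma(0)=[x]$ and $\gamma'(0)=[v]$. The local existence lemma gives $\mathcal{C}\neq\emptyset$, and I set $I=\bigcup_{\gamma\in\mathcal{C}} I_\gamma$, which is an open interval containing $0$ since it is a union of such intervals. To patch, I would observe that for $\gamma_1,\gamma_2\in\mathcal{C}$ both restrict to stacky geodesics on the open interval $I_{\gamma_1}\cap I_{\gamma_2}\ni 0$ with identical initial data at $0$; as the stack is separated, Corollary~\ref{cor:global-uniqueness} forces $\gamma_1=\gamma_2$ on that overlap. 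Hence $\alpha(t):=\gamma(t)$ for any $\gamma$ with $t\in I_\gamma$ is a well-defined curve $\alpha\colon I\to[M/G]$ with $\alpha(0)=[x]$ and $\alpha'(0)=[v]$.

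It then remains to see that $\alpha$ is genuinely a stacky geodesic and that it is maximal and unique. Around each $t\in I$ we have $t\in I_\gamma$ for some $\gamma\in\mathcal{C}$, and $\alpha$ coincides with the geodesic $\gamma$ on a neighbourhood of $t$, so $\alpha$ is locally a stacky geodesic; Lemma~\ref{lemma:local-geodesic} upgrades this to $\alpha$ being a stacky geodesic on all of $I$, with Corollary~\ref{cor:merge-geodesics} ensuring that the locally defined good cocycles of normal geodesics assemble into a global one. Maximality is then immediate: every geodesic with the given initial data belongs to $\mathcal{C}$, hence is defined on a subinterval of $I$ and agrees there with $\alpha$; and if $\alpha'\colon I'\to[M/G]$ is any maximal geodesic with the same data, then $\alpha'\in\mathcal{C}$ yields $I'\subseteq I$ and $\alpha'=\alpha|_{I'}$, so $I'=I$ by maximality of $\alpha'$, giving $\alpha'=\alpha$.

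The main obstacle is not the formal assembly, which is routine once the ingredients are in place, but the verification that the set-theoretic patching $\alpha$ is a bona fide stacky geodesic, i.e.\ that the local good cocycles of normal geodesics cohere rather than merely agreeing pointwise on the coarse orbit space. This is precisely what separatedness provides: Corollary~\ref{cor:global-uniqueness} gives consistency of the \emph{cocycles} (not just of their traces), and Lemma~\ref{lemma:local-geodesic} gives locality of the geodesic condition. Without separatedness the patching can genuinely fail, as the non-Hausdorff two-origins foliation example preceding Proposition~\ref{prop:glob.uniq.geodesics} illustrates, where two normal geodesics with the same initial conditions are not isomorphic.
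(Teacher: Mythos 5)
Your proposal is correct and follows essentially the same route as the paper, whose entire proof is the single line ``combining Corollaries \ref{cor:merge-geodesics} and \ref{cor:global-uniqueness}'': you patch local geodesics via the union-of-intervals construction, using local existence for nonemptiness, Corollary \ref{cor:global-uniqueness} for agreement on overlaps, and Corollary \ref{cor:merge-geodesics} together with Lemma \ref{lemma:local-geodesic} for the gluing. Your write-up is in fact more detailed than the paper's, correctly flagging that the coherence must hold at the level of cocycles rather than of traces on the coarse orbit space.
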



%


\subsection{Geodesics are locally minimizing}


Geodesics on manifolds can be characterized as curves that locally minimize the distance \cite[6.6-6.12]{lee}. 
A way to establish this relation between geodesics and distances, avoiding the calculus of variations, is by using Gauss lemma, which claims that the exponential map behaves as an isometry radially. Next we develop a stacky version of these results.



\begin{proposition}[Stacky Gauss Lemma]\label{prop:gauss-lemma}
Given $(G\toto M,\eta)$ a proper Riemannian groupoid and $x\in M$, 
there exists $\epsilon>0$ such that
$d_N([x],[\exp(v)])=\n{v}$ for all $v\in N_xO$ with $\n{v}<\epsilon$.
\end{proposition}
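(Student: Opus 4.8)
The plan is to reduce the Stacky Gauss Lemma to the classical Gauss lemma for the submanifold $O=O_x$ inside the Riemannian manifold $(M,\eta^{(0)})$, using the identification of $d_N$ with distance-to-orbit near $O$. First I would unwind the notation. Since $v\in N_xO$, let $\tilde v\in (T_xO)^\perp\subset T_xM$ be its horizontal representative, so that $\exp(v)=\exp_x(\tilde v)$ is the value at time $1$ of the geodesic $\gamma(t)=\exp_x(t\tilde v)$ of $(M,\eta^{(0)})$, and note that $\n{v}=\n{v}_N=\n{\tilde v}$ precisely because $\tilde v$ is orthogonal to the orbit, so the quotient norm on $N_xO$ agrees with the ambient norm on $\tilde v$. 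Because $G\toto M$ is proper, $O$ is closed and embedded, hence it admits a tubular neighborhood and the normal exponential is a diffeomorphism from a neighborhood of the zero section over $x$ onto its image; in particular $\exp(v)=\exp_x(\tilde v)$ is nothing but the submanifold normal exponential $\exp^\perp_x(\tilde v)$.

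Next I would choose $\epsilon>0$ small enough to meet two conditions at once: that $y:=\exp_x(\tilde v)$ lies in the tubular neighborhood on which the identity $d_N([x],[y])=d(O,y)$ of \cite[Thm 6.1]{ppt} holds, and that $\n{\tilde v}=\n{v}<\epsilon$ stays below the focal radius of $O$ at $x$, so that $\gamma$ is the unique minimizing path from $O$ to $y$. Both are open conditions satisfied on a small ball of $N_xO$, so such an $\epsilon$ exists. With $\epsilon$ so chosen, the classical Gauss lemma for the submanifold $O$ gives $d(O,y)=\n{\tilde v}$, since $\gamma$ realizes the distance from $O$ to $y$ and has length $\n{\tilde v}$. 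Combining with the cited identity yields $d_N([x],[\exp(v)])=d(O,y)=\n{\tilde v}=\n{v}$, as desired. (The inequality $d_N([x],[\exp(v)])\le \n{v}$ can alternatively be obtained directly from Theorem \ref{thm:distance-length}: as $F_M$ is Riemannian and $\gamma$ starts orthogonal to $O$, the curve $\gamma$ is a normal geodesic, hence represents a stacky curve $\alpha$ of constant normal speed with $\ell(\alpha)=\n{\tilde v}=\n{v}$.)

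I expect the main obstacle to be the bookkeeping behind the choice of $\epsilon$: one must check that the radius governing the \cite{ppt} tubular-neighborhood formula and the focal radius of $O$ can be taken uniformly over all $v\in N_xO$ with $\n{v}<\epsilon$, and that the identification $\exp(v)=\exp^\perp_x(\tilde v)$ is legitimate so that the classical submanifold Gauss lemma applies verbatim. Once this identification is secured, the equality $d(O,y)=\n{v}$ is exactly the statement that inside a tubular neighborhood the distance to $O$ is read off by the normal coordinate, and no further stacky input is required beyond the two already established facts.
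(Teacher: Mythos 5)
Your proof is correct in outline, but it takes a genuinely different route from the paper's. You outsource the stacky content of the lemma to \cite[Thm 6.1]{ppt} --- the identity $d_N([x],[y])=d(O_x,y)$ for $y$ in a small tubular neighborhood of $O_x$ --- and then conclude with the classical submanifold version of the Gauss lemma, namely that inside a tube the distance to $O_x$ is read off by the normal coordinate. This is legitimate: the hypotheses match, since for a Riemannian groupoid the metric $\eta^{(0)}$ is transversally invariant (the characteristic foliation $F_M$ is Riemannian), which is exactly the setting of \cite{ppt}; and the bookkeeping you flag is standard, because proper groupoids have closed embedded orbits, so one can localize near $x$ (curves of length $<\epsilon$ from points of the tube only compete against the compact piece $O_x\cap \overline{B}(x,2\epsilon)$) and take $\epsilon$ below both the tube radius and the focal radius there. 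The paper, by contrast, invokes nothing from \cite{ppt}: it reduces to a slice $S=\exp(B^N_\epsilon(x))$ using the Riemannian Morita invariance of $d_N$ (Corollary \ref{cor:Morita-distance}, itself a consequence of Theorem \ref{thm:distance-length}), linearizes the restricted groupoid so that $x$ becomes a fixed point and nearby orbits become orbits of the orthogonal action $G_x\action N_xO$ --- hence contained in geodesic spheres by the pointwise classical Gauss lemma --- and then identifies $d_N$ with $d(x,\cdot)$ on the slice by an elementary chain argument. What each approach buys: yours is shorter, at the price of importing from \cite{ppt} precisely the comparison between $d_N$ and the Riemannian distance that is the heart of the lemma; the paper's argument is self-contained within its stacky-curve framework and recovers the relevant statement of \cite{ppt} as a corollary (that $d_N$ is a genuine distance), which matters for the paper's stated aim of giving an alternative approach to, and strengthening of, \cite{ppt} --- a paper in which it elsewhere identifies an error (\cite[Prop.~3.14]{ppt}).
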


\begin{proof}
Let $B^{N}_{\epsilon}(x)\subset N_xO$ be the ball of normal vectors at $x$ of radius $\epsilon$. If $\epsilon$ is small then $S=\exp( B^{N}_{\epsilon}(x))$ is a slice at $x$: it is transversal to every orbit it meets, and $S\cap O_x$ is 0-dimensional. Since $G$ is proper $O_x$ is embedded, so we can further assume that $S\cap O_x=\{x\}$. Let $U$ be the open obtained by saturation of $S$. The groupoids $G_S\toto S$ and $G_U\toto U$ inherit Riemannian structures by restrictions, and therefore their own normal distances. Since $G_S\subset G_U$ is a Riemannian Morita map the map $(S/G_S,d_N)\to(U/G_U,d_N)$ preserves distances (see Corollary \ref{cor:Morita-distance}). We have thus reduced the problem to the fixed point case.

After eventually reducing $\epsilon$, we can assume that the exponential map gives a groupoid isomorphism, namely a groupoid linearization as below\cite{dhf1}:
$$\exp:(G_x\ltimes B^N_\epsilon(x)\toto B^N_\epsilon(x))\to(G_S\toto S)$$
We claim that the orbits $O\subset S$ are contained in the geodesics spheres around $x$. In fact, 
if $y,y'\in O$ then there are $v,v'\in B^N_\epsilon(x)$ and $g\in G_x$ such that $y=\exp(v)$, $y'=\exp(v')$ and $g.v=v'$. Since $G_x\action N_xO$ preserves the norm, and since $d(x,y)=\n{v}$ by the classic Gauss lemma (cf. \cite[Prop. 6.10]{lee}), we conclude that $d(x,y)=\n{v}=\n{v'}=d(x,y')$.

Finally, let us show that $d_N([x],[y])=d(x,y)=\n{v}$. 
It is clear that $d_N([x],[y])\leq d(x,y)$. 
For the converse, if $(x_0,\dots,x_{2n+1})$ is a chain from $x=x_0$ to $y=x_{2n+1}$, then
$x_{2i}$ and $x_{2i+1}$ are in the same orbit for all $i$, and 
$d(x,x_{2i})=d(x,x_{2i+1})$.
Then 
$$d(x,y)=d(x,x_{2n+1})=d(x,x_{2n})
\leq d(x,x_{2n-1})+d(x_{2n-1},x_{2n})$$
and, by an inductive argument, $d(x,y)\leq \sum_{i=1}^n d(x_{2i-1},x_{2i})$.
Computing the infimum over all the possible chains $(x_0,\dots,x_{2n+1})$ we get $d(x,y)\leq d_N([x],[y])$ and the proof is complete.
%
%
%
%
\end{proof}

\begin{corollary}[cf. \cite{ppt}]
If $(G\toto M,\eta)$ is a proper Riemannian groupoid then $d_N$ is indeed a distance, namely $d_N([x],[y])>0$ if $[x]\neq[y]$.
\end{corollary}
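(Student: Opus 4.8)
We begin by observing that $d_N$ is symmetric and satisfies the triangle inequality directly from its definition through chains, and that $d_N([x],[x])=0$ via the trivial chain; hence the only thing left to prove is non-degeneracy, namely $d_N([x],[y])>0$ whenever $[x]\neq[y]$. The plan is to reduce this to the Stacky Gauss Lemma. Fixing such a pair, Proposition \ref{prop:gauss-lemma} provides $\epsilon>0$ together with a slice $S=\exp(B^{N}_{\epsilon}(x))$ and its saturation $U$, on which $d_N([x],[\exp(v)])=\n{v}$ for every $v\in N_xO$ with $\n{v}<\epsilon$. Since $\exp$ is injective on $B^{N}_{\epsilon}(x)$ and $S\cap O_x=\{x\}$, we have $[\exp(v)]=[x]$ if and only if $v=0$.

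I would then split into two cases according to whether the orbit $O_y$ meets the tube $U$. If $O_y\cap U\neq\emptyset$, then $[y]=[\exp(v)]$ for some $v\in B^{N}_{\epsilon}(x)$, and since $[y]\neq[x]$ forces $v\neq0$, the Gauss Lemma yields $d_N([x],[y])=\n{v}>0$ immediately. The substantial case is $O_y\cap U=\emptyset$, for which I would establish the stronger bound $d_N([x],[y])\geq\epsilon$.

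For this I would invoke Theorem \ref{thm:distance-length}, by which $d_N([x],[y])$ is the infimum of lengths of stacky curves joining $[x]$ and $[y]$, so it suffices to prove that every such curve $\alpha$ has $\ell(\alpha)\geq\epsilon$. The underlying curve $\bar\alpha\colon I\to M/G$ starts at $[x]\in U/G_U$, an open neighborhood, and ends at $[y]$ outside it; letting $t_0=\sup\{t:\bar\alpha([0,t])\subset U/G_U\}$ be the first exit time, one checks $\bar\alpha(t_0)\notin U/G_U$ by openness. Using the homeomorphism $U/G_U\cong B^{N}_{\epsilon}(x)/G_x$ furnished by the linearization, which carries $d_N([x],-)$ to $\n{-}$, any bound $\n{v(t)}\leq r<\epsilon$ near $t_0$ would keep $\bar\alpha(t_0)$ inside the compact set $\{[\exp(v)]:\n{v}\leq r\}\subset U/G_U$, a contradiction; hence $d_N([x],\bar\alpha(t_0))=\epsilon$. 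Since $d_N([x],\bar\alpha(t_0))\leq\ell(\alpha|_{[0,t_0]})\leq\ell(\alpha)$, we conclude $\ell(\alpha)\geq\epsilon$, so $d_N([x],[y])\geq\epsilon>0$. In either case $d_N([x],[y])>0$, as desired.

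The step I expect to be the main obstacle is controlling the distance at the exit point $\bar\alpha(t_0)$: one must genuinely show that a stacky curve leaving the tube attains normal distance $\epsilon$ from $[x]$. This rests on the compactness of the sublevel sets of $\n{-}$ inside the linearized slice, together with the continuity of the underlying curve $\bar\alpha$ and the $1$-Lipschitz character of $d_N$. A cleaner alternative would be to run the same first-exit argument directly on chains, exploiting $d_N\leq d$ and the isometric action on normal balls, although the continuous version via Theorem \ref{thm:distance-length} seems most transparent here.
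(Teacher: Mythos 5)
Your proof is correct, and it actually contains more than the paper does: the paper states this corollary with no written proof, presenting it as an immediate consequence of the Stacky Gauss Lemma (Proposition \ref{prop:gauss-lemma}) and citing \cite{ppt}, where nondegeneracy was originally obtained by a chain argument exploiting that, the orbit foliation being Riemannian, the distance $z\mapsto d(O_x,z)$ is transversely invariant near $O_x$. What the Gauss Lemma gives by itself is only the case of points lying in the tube $U$; the genuine content is ruling out $d_N([x],[y])=0$ for $[y]$ whose orbit misses $U$, and this is exactly the step you supply. Your route --- reduce to curves via Theorem \ref{thm:distance-length}, take the first exit time from $U/G_U$, and use that the sets $\{[\exp(v)]:\n{v}\le r\}$ with $r<\epsilon$ are compact, hence closed in $M/G$ (here you are implicitly using that $M/G$ is Hausdorff, which holds precisely because $G\toto M$ is proper; this hypothesis is genuinely needed and worth flagging), so that a curve leaving the tube must accumulate normal distance $\epsilon$ --- is sound, and the Lipschitz bound $d_N([x],\bar\alpha(t))\le\ell(\alpha|_{[0,t]})$ then forces $\ell(\alpha)\ge\epsilon$. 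Compared with the chain-based alternative of \cite{ppt}, your argument has the merit of staying entirely within the stacky-curve formalism of Section \ref{sec.curves}, at the cost of invoking Theorem \ref{thm:distance-length}; both are legitimate, and yours is a faithful completion of the deduction the paper leaves implicit.
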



Given $\alpha:I\to [M/G]$ a stacky curve into a Riemannian stack with unit speed, mimicking the classic case, we say that $\alpha$ is {\bf minimizing} if $d_N(\alpha(t),\alpha(s))=\m{t-s}$ for all $s,t\in I$, and $\alpha$ is {\bf locally minimizing} if any $t_0\in I$ has a neighborhood $t_0\in J\subset I$ such that $\alpha|_{J}$ is minimizing. 
Next example shows that stacky geodesics may not be locally minimizing. 


\begin{example}\label{ex:minimizing}
Let $\Z_2$ act over the plane $\R^2$ by the reflection $(x,y)\mapsto(x,-y)$ along the $x$-axis. The canonical metric on $\R^2$ yields a 2-metric on the action groupoid $\Z_2\ltimes\R^2\toto\R^2$, and therefore a metric on the orbit stack $[\R^2/\Z^2]$, that is actually an orbifold (cf. Example \ref{ex:action3}). Then the straight line $t\mapsto (0,t)$ induces a stacky geodesic $\alpha$ on $[\R^2/\Z^2]$ that is not locally minimizing around $0$, for $d_N(\alpha(-\epsilon),\alpha(\epsilon))=0$ for every $\epsilon$.
\end{example}


A heuristic explanation for previous example goes as follows. Geodesics on manifolds are infinitesimal, given by a differential equation, extremals for a variational principle, and since manifolds are locally simply connected, the space of small curves connecting two nearby points is connected, which allows the passing from infinitesimal to global. But stacks are not locally simply connected, there can be fundamental group concentrated on a single point, so a geodesic can have minimum length among all its homotopic curves, and yet not be locally minimizing. 


By the linearization theorem, the objects of a proper Lie groupoid inherit a stratification by isotropy type, and the same holds for the orbit space \cite{ppt}. Next we show that, as suggested in Example \ref{ex:minimizing}, a minimizing geodesic cannot cross different strata (compare with \cite[3.5]{aalm}).

\begin{proposition}\label{prop:stratum}
If $[M/G]$ is a separated stack and $\alpha:I\to[M/G]$ is a stacky minimizing geodesic then the isotropy groups $G_{\alpha(t)}$ are canonically isomorphic, and $\alpha(I)$ is included into a single stratum. 
\end{proposition}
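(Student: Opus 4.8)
The plan is to work locally, reduce to the linear model via a slice, and show that the velocity of a minimizing geodesic must be fixed by the whole isotropy group; constancy of the isotropy type along the curve then follows by connectedness of $I$. First I would fix $t_0\in I$, choose a representative $x_0$ with $\alpha(t_0)=[x_0]$, and pass to a slice $S=\exp(B^N_\epsilon(x_0))$ exactly as in the proof of Proposition~\ref{prop:gauss-lemma}. Since $(G_S\toto S)\to(G_U\toto U)$ is a Riemannian Morita map, Corollary~\ref{cor:Morita-distance} guarantees that the normal distance, and hence the minimizing property of $\alpha$ near $t_0$, is preserved. After linearizing via the exponential map \cite{dhf1} I may assume the local model is the linear action $G_{x_0}\ltimes B^N_\epsilon(x_0)$ with $x_0$ the origin, and that $\alpha$ is presented near $t_0$ by the radial geodesic $a(t)=\exp((t-t_0)v)$ for a unit vector $v\in N_{x_0}O$.

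The key step is to prove that $v$ is fixed by $G_{x_0}$. Fix $g\in G_{x_0}$. Because $\exp$ is equivariant and the origin is fixed, $g\cdot\exp(-\delta v)=\exp(-\delta\, gv)$ lies in the same orbit as $\exp(-\delta v)$, so $\alpha(t_0-\delta)=[\exp(-\delta\, gv)]$. Using $d_N\leq d$ together with the estimate of the metric distance in normal coordinates, where the metric is Euclidean to first order at $x_0$, the straight segment from $-\delta\, gv$ to $\delta v$ gives
$$d_N(\alpha(t_0-\delta),\alpha(t_0+\delta))\leq d(\exp(-\delta\, gv),\exp(\delta v))=\delta\,\n{v+gv}+O(\delta^2).$$
On the other hand, the minimizing hypothesis (with unit speed) forces the left-hand side to equal $2\delta$. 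Dividing by $\delta$ and letting $\delta\to0$ yields $2\leq\n{v+gv}$; since $\n{v+gv}\leq\n{v}+\n{gv}=2$ by the triangle inequality (the normal representation is by isometries, so $\n{gv}=\n{v}$), equality holds, and the equality case of the triangle inequality forces $gv=v$. As $g$ was arbitrary, $v\in(N_{x_0}O)^{G_{x_0}}$.

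Once $v$ is fixed, the conclusion is quick. In the linear model the isotropy of the point $sv$ is $\{g\in G_{x_0}:g\cdot sv=sv\}=G_{x_0}$ for every small $s$, so every point $\exp(sv)$ along the geodesic has isotropy $G_{x_0}$; thus the isotropy type is constant on a neighborhood of $t_0$. Being locally constant on the connected interval $I$, it is globally constant, and $\alpha(I)$ lies in a single orbit-type stratum. To obtain the canonical isomorphisms $G_{\alpha(s)}\cong G_{\alpha(t)}$, I would use that over a stratum the isotropy groups assemble into a locally trivial bundle of Lie groups (in each slice they are literally the constant group $G_{x_0}$); the transition arrows $a_{ji}(t)$ of the cocycle conjugate these identifications across overlapping charts, and patching along the connected curve $\alpha$ produces the asserted canonical isomorphisms.

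The hard part is the key step, namely turning the heuristic of Example~\ref{ex:minimizing}, that the isotropy \emph{folds} a transverse geodesic and shortens it, into the clean limit argument above; this rests on the first-order estimate of $d$ in normal coordinates and on exploiting that $g$ acts as an isometry. The remaining bookkeeping, the Morita reduction to the slice and the assembly of the canonical isomorphisms, is routine given Corollary~\ref{cor:Morita-distance} and the linearization theorem.
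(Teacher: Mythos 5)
Your proof is correct, and its key step is genuinely different from the paper's. Both arguments reduce, via a slice and the linearization $G_{x_0}\ltimes B^{N}_{\epsilon}(x_0)$, to showing that the whole isotropy group fixes the radial geodesic, and both finish by connectedness of $I$; but the paper argues by \emph{folding}: for $g\in G_{x_0}$ it forms the piecewise smooth curve $\tilde a$ equal to $a$ for $t\leq t_0$ and to $g\cdot a$ for $t\geq t_0$, notes that $\tilde a$ has the same trace in $M/G$ as $a$ and hence is $d_N$-minimizing, concludes it is $d$-minimizing, and invokes the classical regularity of piecewise minimizers \cite[Thm 6.6]{lee} to get that $\tilde a$ is a smooth geodesic, forcing $g\cdot a=a$ pointwise. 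Your argument is infinitesimal instead: you compare the two-sided minimizing identity $d_N(\alpha(t_0-\delta),\alpha(t_0+\delta))=2\delta$ (correctly using the full minimizing hypothesis rather than ``minimizing at $t_0$'', which would not suffice) with the first-order estimate $d(\exp(-\delta\, gv),\exp(\delta v))=\delta\n{v+gv}+O(\delta^2)$, and use the equality case of the triangle inequality in an inner product space to get $gv=v$, then propagate by linearity and equivariance of $\exp$. Your route buys something real: it avoids the regularity theorem for minimizers and uses only the metric to first order at $x_0$ plus the isometry of the normal representation (a stated axiom of Riemannian groupoids), whereas the paper's folding implicitly needs $g\cdot a$ to again be a unit-speed curve in the chart, i.e., some isometric behaviour of the $G_{x_0}$-action beyond the purely transverse data. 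The paper's route, in turn, produces $g\cdot a(t)=a(t)$ for all $t$ in the chart in one stroke, and the same folding trick is recycled in the proof of Proposition \ref{prop:geodesic-realizing}. Your closing construction of the canonical isomorphisms (constant groups in each slice, conjugated by the cocycle transitions across charts) matches the paper's in substance and is no less rigorous than its one-line justification.
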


\begin{proof}
Working locally, we can represent $\alpha$ by a normal geodesic $a:J\to M$, and if $x=a(t_0)$, we can replace the original groupoid to its linearization $G_x\times B^{N}_{\epsilon}(x)\toto B^{N}_{\epsilon}(x)$, as done in the proof of Proposition \ref{prop:gauss-lemma}.
We claim that $a(J)\subset B^{N}_{\epsilon}(x)$ remains invariant by the action of the isotropy $G_x$. In fact, given $g\in G_x$, the piecewise smooth curve
$$\tilde a(t)=\begin{cases}
        a(t) & t\leq t_0 \\
        g.a(t) & t\geq t_0
       \end{cases}$$
is also minimizing for $d_N$, then it is also minimizing for $d$, and by classic Riemannian geometry \cite[Thm 6.6]{lee}, it must be a geodesic, and then $a=\tilde a$. This shows that $G_{a(t)}\cong G_x$ for $t>t_0$, with the isomorphism given by the exponential map on the direction of $a'(t_0)$, and similarly for $t<t_0$. The proposition follows by connectedness of $I$.
\end{proof}


In order to provide a characterization of geodesics by means of the normal distance, we introduce the next definition. We say that $\alpha$ is {\bf minimizing at $t_0$} if there exists $\epsilon>0$ such that $d(\alpha(t),\alpha(t_0))=\m{t-t_0}$ whenever $\m{t-t_0}<\epsilon$. A locally minimizing curve is minimizing at every point, and even though the converse does not hold in general (see Example \ref{ex:minimizing}), it does when $[M/G]=N$ is just a manifold \cite[Prop 6.10]{lee}.

\begin{theorem}\label{thm:geodesics-distance}
Let $G\toto M$ be a proper Lie groupoid. Let $\alpha:I\to[M/G]$ be a stacky curve with unit speed. Then $\alpha$ is a stacky geodesic if and only if it is minimizing at every point.
\end{theorem}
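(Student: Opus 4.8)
The plan is to treat the two implications separately after reducing to a local statement. Both properties in question are local: being a stacky geodesic is local by Lemma \ref{lemma:local-geodesic}, and being minimizing at a point is local by definition. So I fix $t_0\in I$, restrict $\alpha$ to a small interval $J$ around $t_0$, and represent it there by a single smooth curve $a\colon J\to M$ of unit normal speed, with $x=a(t_0)$.

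For the forward implication I assume $\alpha$ is a geodesic, so the local representative can be taken to be a normal geodesic $a(t)=\exp_x((t-t_0)a'(t_0))$ with $a'(t_0)\in N_xO$ a unit vector. Then $(t-t_0)a'(t_0)\in N_xO$, and the Stacky Gauss Lemma (Proposition \ref{prop:gauss-lemma}) gives at once $d_N(\alpha(t_0),\alpha(t))=d_N([x],[\exp((t-t_0)a'(t_0))])=\n{(t-t_0)a'(t_0)}=\m{t-t_0}$ for $\m{t-t_0}$ small. Hence $\alpha$ is minimizing at $t_0$, and since $t_0$ was arbitrary this direction is finished once Gauss is available.

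For the converse I assume $\alpha$ is minimizing at every point. The first observation is that, because $\alpha$ has unit speed, $\ell(\alpha|_{[t_0,t]})=\m{t-t_0}=d_N(\alpha(t_0),\alpha(t))$, so by Theorem \ref{thm:distance-length} each one-sided restriction $\alpha|_{[t_0,t]}$ is a shortest stacky curve from $\alpha(t_0)$ to $\alpha(t)$. Imitating the reductions in the proofs of Propositions \ref{prop:gauss-lemma} and \ref{prop:stratum}, I pass to a slice $S$ through $x$ and replace the groupoid by its linearization $G_x\ltimes B^N_\epsilon(x)\toto B^N_\epsilon(x)$, which preserves $d_N$ by Corollary \ref{cor:Morita-distance}; now $x$ is a fixed point sitting at the origin of normal coordinates, where the Stacky Gauss Lemma forces the representative to lie on the Euclidean geodesic spheres of radius $\m{t-t_0}$. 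Because $G_x$ acts orthogonally, $N_xO/G_x$ is the Euclidean metric cone over $S(N_xO)/G_x$, and a shortest curve issuing from its apex is a radial ray; thus each one-sided piece of $\alpha$ is the image of a radial geodesic, with outgoing direction $\alpha'(t_0)=[w_0]$. Smoothness of the representative at $t_0$ then pins the incoming direction to the antipode $[-w_0]$, so the two pieces assemble into the single normal geodesic $\gamma(t)=\exp_x((t-t_0)w_0)$, giving $\alpha=[\gamma]$ near $t_0$. Applying Lemma \ref{lemma:local-geodesic} globalizes this and shows $\alpha$ is a stacky geodesic.

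The main obstacle is the last step of the converse: proving that a shortest stacky curve emanating from the singular point is a radial ray of the local cone model. Two points require care. First, the chosen representative may drift tangentially to the orbits without altering its normal speed or $d_N$, so the curve is radial only after absorbing a $G_x$-valued gauge; the cone description (equivalently, differentiating $\n{v_t}^2=(t-t_0)^2$ and invoking Cauchy--Schwarz) is what isolates the radial direction. Second, the incoming and outgoing rays must be matched through the apex, which I would do by using the smoothness of the representative at $t_0$ to identify the incoming direction with $[-w_0]$. This mechanism also clarifies, in line with Example \ref{ex:minimizing}, why a stacky geodesic is minimizing at each point yet need not be locally minimizing: nothing forces the two one-sided rays to be distinct branches, and when the isotropy identifies $[w_0]$ with $[-w_0]$ the curve folds back on the orbit space.
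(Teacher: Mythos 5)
Your forward implication is exactly the paper's: localize, represent $\alpha$ by a normal geodesic, and apply the stacky Gauss Lemma (Proposition \ref{prop:gauss-lemma}); that part is fine. The genuine gap is in the converse, at the step where you claim that, since $G_x$ acts orthogonally, the local orbit space is the Euclidean metric cone over $S(N_xO)/G_x$, and hence that shortest curves issuing from the apex are radial rays. That cone description is correct for the \emph{flat} metric on $N_xO$, but the linearization $G_x\ltimes B^N_\epsilon(x)\toto B^N_\epsilon(x)$ is a groupoid isomorphism, not an isometry onto the flat model: it transports $\eta$ to a metric agreeing with the flat one only at the origin. (Take $G_x$ trivial: then $(B^N_\epsilon(x)/G_x,d_N)$ is a normal-coordinate ball with its Riemannian distance, which is a Euclidean cone only when the curvature vanishes.) Proposition \ref{prop:gauss-lemma} controls distances \emph{from the apex} only, so a $d_N$-shortest curve need not be shortest for the cone metric, and the inference ``shortest $\Rightarrow$ radial ray'' does not follow as stated. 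Your parenthetical alternative has the same defect: differentiating $\n{v_t}^2=(t-t_0)^2$ and invoking Cauchy--Schwarz yields $\n{v_t'}\geq 1$ with equality iff $v_t'\parallel v_t$, but all in the \emph{flat} norm, whereas your hypothesis is unit \emph{normal} speed with respect to $\eta$; that hypothesis bounds neither the flat speed nor even the $\eta$-speed of the representative (which can drift along orbits, as you note yourself), so the equality case cannot be invoked.

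The repair is to run the argument with the actual metric in the slice, which is essentially what the paper does, far more economically. In the linearization, the proof of Proposition \ref{prop:gauss-lemma} gives not only $d_N([x],[\exp(v)])=\n{v}$ but also $d_N([x],[y])=d(x,y)$ for $y$ in the slice, where $d$ is the Riemannian distance there; hence minimizing at $t_0$ yields $d(x,a(t))=\m{t-t_0}$, i.e.\ the representative is minimizing at $t_0$ for $d$ itself, and the classical fact that such curves are geodesics \cite[Prop 6.10]{lee}, together with Lemma \ref{lemma:local-geodesic}, concludes. If you want to keep your more explicit route, the correct substitute for the cone step is the classical Gauss lemma in the slice: the orbits lie inside the geodesic spheres around $x$, so the radial field $\nabla r$ is a unit field orthogonal to the orbits, whence $1=\tfrac{d}{dt}\,d(x,a(t))=\eta(\nabla r,a'(t))\leq\n{a'(t)}_N=1$, and equality forces the normal component of $a'$ to be exactly $\nabla r$. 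After that, your two remaining observations --- absorbing the orbit-tangential drift into a $G_x$-valued gauge (an isomorphism of cocycles), and matching the incoming and outgoing rays through the apex by smoothness of the gauged representative --- are genuine and correctly placed; indeed they address a point the paper's own proof passes over silently. But as written, the step that is supposed to isolate the radial direction is precisely the one that fails.
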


\begin{proof}
Suppose $\alpha$ is a geodesic and take $t_0\in I$. By restricting $\alpha$ to a small interval $J=(t_0-\epsilon,t_0+\epsilon)$, we can assume that $\alpha$ is represented by a normal geodesic $a:J\to M$. Write $x=a(t_0)$. If $v=a'(t_0)\in N_{x}M$, then $a(t)=\exp((t-t_0)v)$ and by Gauss lemma \ref{prop:gauss-lemma} we deduce that the following holds and that $\alpha$ is minimizing at $t_0$:
$$d_N(\alpha(t_0),\alpha(t))=d_N([x],[a(t)])=d_N([x],\exp((t-t_0)v))=\m{t-t_0}\n{v}=\m{t-t_0}$$

Conversely, suppose that $\alpha$ is minimizing at every $t\in I$, and let $t_0\in I$. By restricting to a neighborhood of $t_0$ we can assume that $\alpha$ is represented by a groupoid curve $a:J\to M$,
and if $x=a(t_0)$, we can replace the original groupoid to its linearization $G_x\times B^{N}_{\epsilon}(x)\toto B^{N}_{\epsilon}(x)$, as done before.
Then for $t$ near $t_0$ we have
$$\m{t-t_0}=d_N(a(t_0),a(t))=d(x,a(t)),$$
the first identity is because $\alpha$ is minimizing at $t_0$ and the second one is by Gauss lemma \ref{prop:gauss-lemma}. It follows that $a:J\to B^{N}_{\epsilon}(x)$ is minimizing at $t_0$ for both $d$ and $d_N$, then $a$ is both a geodesic and normal, so $\alpha$ is locally a geodesic, and therefore a geodesic (see Lemma \ref{lemma:local-geodesic}).
\end{proof}


\begin{corollary}
Equivalent metrics $\eta_1,\eta_2$ on a proper Riemannian groupoid $G\toto M$ define the same geodesics on the orbit stack. Stacky geodesics $\alpha:I\to [M/G]$ into a separated Riemannian stack only depend on the stacky metric $[\eta]$.
\end{corollary}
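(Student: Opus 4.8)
The plan is to deduce everything from Theorem~\ref{thm:geodesics-distance}, which reformulates the notion of stacky geodesic purely in terms of data that is visibly an invariant of the equivalence class of the metric. This routing is essential: the defining condition for a stacky geodesic is that it be presented by a cocycle of \emph{normal geodesics} $a_i:U_i\to M$, and whether a curve in $M$ is a geodesic is governed by the ambient metric $\eta^{(0)}$, which is \emph{not} determined by the class $[\eta]$ (equivalent metrics agree only on the normal directions). So a direct comparison of the defining cocycles for $\eta_1$ and $\eta_2$ is not available, and one must pass through the distance characterization. First I would record the two facts on which the argument rests. On one hand, two equivalent metrics $\eta_1,\eta_2$ induce by definition the same inner product on each normal space $N_xO$; since the speed of a stacky curve is by definition the normal norm, $\n{\alpha'(t)}=\n{a'(t)}_N$, the two metrics assign the \emph{same} speed to every stacky curve, and in particular the notion of unit speed and the arc-length reparametrization are identical for $\eta_1$ and $\eta_2$. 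On the other hand, by the corollary to Theorem~\ref{thm:distance-length} asserting that equivalent metrics yield the same pseudo-distance, $\eta_1$ and $\eta_2$ give rise to the very same normal pseudo-distance $d_N$ on $M/G$.

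Given these, I would argue as follows. Let first $\alpha:I\to[M/G]$ be a stacky curve of unit speed; by the previous paragraph this hypothesis is insensitive to the choice between $\eta_1$ and $\eta_2$. The property of being minimizing at every point, namely that $d_N(\alpha(t),\alpha(t_0))=\m{t-t_0}$ for $t$ near each $t_0$, is phrased entirely through $d_N$, and is therefore literally the same condition for $\eta_1$ and for $\eta_2$. Applying Theorem~\ref{thm:geodesics-distance} to the proper groupoid $G\toto M$ with each metric in turn, we obtain that $\alpha$ is a stacky geodesic for $\eta_1$ if and only if it is minimizing at every point, if and only if it is a stacky geodesic for $\eta_2$. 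For a general stacky geodesic one first reduces to the unit-speed case: a stacky geodesic has constant speed in the normal norm (its local representatives are normal geodesics of $M$, hence orthogonal to the orbits and of constant ambient speed, so $\n{a_i'}_N=\n{a_i'}$ is constant), and affine reparametrization carries geodesics to geodesics. Since the speed, and hence the unit-speed reparametrization, agrees for the two metrics, a curve is a geodesic for $\eta_k$ exactly when its common unit-speed reparametrization is, and constant curves are trivially geodesics for both. This proves the first assertion.

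For the second assertion, let $([M/G],[\eta])$ be a separated Riemannian stack, presented by a proper groupoid $G\toto M$. Any two metrics on $G\toto M$ representing the class $[\eta]$ are by definition equivalent, so by the first part they determine the same family of stacky geodesics $\alpha:I\to[M/G]$. Hence this family depends only on $[\eta]$ and not on the chosen representative, which is precisely the well-definedness announced in Remark~\ref{rmk:good-definition}.

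I do not expect a serious obstacle: the entire content is carried by Theorem~\ref{thm:geodesics-distance}, and the corollary need only observe that each ingredient of that theorem's statement — the speed, the unit-speed condition, and the distance $d_N$ — is manifestly a function of the equivalence class of the metric, even though the ambient metric $\eta^{(0)}$ and the associated notion of ``normal geodesic in $M$'' are not. The one point requiring genuine care is exactly this discrepancy: one must resist the temptation to argue by comparing the defining cocycles of normal geodesics directly, and instead verify that the normal-norm speed and $d_N$ are class-invariant so that the minimizing characterization transfers. Once the normal norm is known to be class-invariant, the passage between arbitrary geodesics and their unit-speed reparametrizations, together with the reduction to Theorem~\ref{thm:geodesics-distance}, is immediate.
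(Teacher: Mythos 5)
Your proposal is correct and follows exactly the route the paper intends: Theorem~\ref{thm:geodesics-distance} characterizes stacky geodesics on proper groupoids by the minimizing-at-every-point condition, which involves only the normal speed and the pseudo-distance $d_N$, both of which are invariants of the equivalence class $[\eta]$ by the corollaries of Theorem~\ref{thm:distance-length}. Your additional care with the unit-speed reduction (constant speed of geodesics and class-invariance of the arc-length reparametrization) fills in a detail the paper leaves implicit, but the argument is essentially the same.
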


\subsection{Complete Riemannian stacks}


Given $(G\toto M,\eta)$ a Riemannian groupoid, we say that $[M/G]$ is {\bf geodesically complete} if any geodesic $\alpha :I\to [M/G]$ can be extended to one $\tilde\alpha: \R \to [M/G]$ defined over the whole $\R$. 
Next we show that if $[M/G]$ is both separated and geodesically complete then the distance between any two points can be realized by a stacky geodesic. The proof mimics the argument used in the manifold case \cite{lee}.

\begin{proposition}\label{prop:geodesic-realizing}
Given $([M/G],[\eta])$ a separated Riemannian stack, if it is geodesically complete and $[x],[y]\in M/G$ are such that $d_N([x],[y])=r$ then there is a unitary stacky geodesic $\alpha:I\to[M/G]$ with $\alpha(0)=x$ and $\alpha(r)=y$.
\end{proposition}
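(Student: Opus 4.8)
The plan is to adapt the classical Hopf--Rinow existence argument, replacing metric balls and spheres by their stacky counterparts and leaning on the results already established. Fix representatives $x,y\in M$. First I would record a \emph{crossing lemma}: for $\delta>0$ small the coarse geodesic sphere $\Sigma_\delta=\{[\exp v]:v\in N_xO,\ \n{v}=\delta\}$ is compact, being the continuous image of the ordinary sphere in the finite-dimensional space $N_xO$ under $v\mapsto[\exp v]$, and by the stacky Gauss Lemma (Proposition~\ref{prop:gauss-lemma}) it coincides with the coarse metric sphere $\{[z]:d_N([x],[z])=\delta\}$ inside a slice neighborhood of $[x]$. Since $[z]\mapsto d_N([z],[y])$ is continuous, it attains a minimum $m$ on $\Sigma_\delta$, say at $[\exp v_0]$. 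I would then show $d_N([x],[y])=\delta+m$: the inequality $\le$ is the triangle inequality, while for $\ge$ every stacky curve from $[x]$ to $[y]$ has, by continuity of $t\mapsto d_N([x],\alpha(t))$ and the intermediate value theorem, a first time crossing $\Sigma_\delta$; splitting its length there and using Theorem~\ref{thm:distance-length} (length dominates $d_N$) gives $\ell(\alpha)\ge\delta+m$, and taking the infimum yields the claim. In particular $d_N([\exp v_0],[y])=r-\delta$.

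Next I would let $\alpha\colon\R\to[M/G]$ be the unit-speed stacky geodesic with $\alpha(0)=[x]$ and initial velocity the normal direction of $v_0$; it is defined on all of $[0,r]$ precisely because the stack is geodesically complete, and by the Gauss Lemma $\alpha(\delta)=[\exp v_0]$. Consider
$$A=\{t\in[0,r]: d_N(\alpha(t),[y])=r-t\}.$$
It is closed by continuity and contains $0$ and $\delta$; set $t_0=\sup A\in A$. The core of the argument is to prove $t_0=r$ by continuation. Assuming $t_0<r$, I would apply the crossing lemma at the point $\alpha(t_0)$ with a small radius $\delta'$ (with $t_0+\delta'\le r$) to produce a point $[z']$ on the $\delta'$-sphere about $\alpha(t_0)$ with $d_N([z'],[y])=(r-t_0)-\delta'$, reached from $\alpha(t_0)$ by a normal geodesic $\gamma$.

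Then I would form the concatenation $\beta$ of $\alpha|_{[0,t_0]}$ with $\gamma$. From $t_0\in A$ and the triangle inequality one checks $d_N([x],\alpha(t_0))=t_0$ and $d_N([x],[z'])=t_0+\delta'$, so $\beta$ has length equal to $d_N([x],[z'])$ and is therefore minimizing. A minimizing unit-speed curve is minimizing at every point, hence a stacky geodesic by Theorem~\ref{thm:geodesics-distance}; as $\beta$ shares its initial data with $\alpha$, global uniqueness on separated stacks (Corollary~\ref{cor:global-uniqueness}) forces $\beta=\alpha$ on $[0,t_0+\delta']$. Consequently $\alpha(t_0+\delta')=[z']$ and $t_0+\delta'\in A$, contradicting the definition of $t_0$. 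Thus $t_0=r$, so $d_N(\alpha(r),[y])=0$ and, since $d_N$ is a genuine distance on a separated stack, $\alpha(r)=[y]$; the unit-speed geodesic $\alpha|_{[0,r]}$ then has length $r=d_N([x],[y])$, as required.

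Finally, I expect the main obstacle to be this continuation step: one must know that the broken curve $\beta$ has no corner, which here is settled cleanly by the equivalence ``minimizing $\Leftrightarrow$ geodesic'' of Theorem~\ref{thm:geodesics-distance}, and that it genuinely prolongs $\alpha$, which rests on the global uniqueness of stacky geodesics. A secondary technical point is the identification, through the Gauss Lemma and the linearization used in its proof, of the coarse metric sphere with the exponential sphere $\Sigma_\delta$ inside a slice; this is what guarantees that a distance-minimizing crossing point can be taken in the form $[\exp v_0]$ and that the geodesic built from $v_0$ realizes the first step.
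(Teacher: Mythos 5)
Your proposal reproduces the paper's strategy almost step by step: the small geodesic sphere around a representative of $[x]$, the compactness argument producing the minimizing direction $v_0$, the identity $d_N([x],[y])=\delta+m$ proved by splitting curves at the sphere and invoking Theorem~\ref{thm:distance-length}, the closed set $A$ with $t_0=\sup A$, and the continuation by contradiction. The one place where you diverge from the paper is the crucial corner-removal step, and there your argument has a genuine gap.

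You form the broken curve $\beta$, the concatenation of $\alpha|_{[0,t_0]}$ with $\gamma$, check that it is minimizing, and conclude that it is a stacky geodesic ``by Theorem~\ref{thm:geodesics-distance}''. But that theorem is stated and proved for stacky curves, and a stacky curve is by definition a class of cocycles whose local components $a_i:U_i\to M$ are smooth. Whether the concatenation $\beta$ is a stacky curve at all --- that is, whether near $t_0$ it can be represented by a single smooth curve in $M$ --- is exactly the question of whether the left and right velocities at $t_0$ agree, i.e.\ whether there is a corner. So your appeal to the equivalence minimizing $\Leftrightarrow$ geodesic presupposes the smoothness you are trying to establish: the theorem's hypotheses are simply not met by a merely piecewise-smooth object, and this is precisely the step where all the content lies (it is the stacky analogue of why the classical Hopf--Rinow proof needs the minimizing-implies-geodesic theorem for \emph{piecewise regular} curves, not just smooth ones). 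The paper closes this gap at the manifold level: near $t_0$ it represents both pieces by normal geodesics $a,b$ in $M$ (in the linearized model) with $a(t_0)=b(t_0)=z$, observes that the broken curve is minimizing for $d_N$, hence --- because it has unit speed and $d\geq d_N$ --- also minimizing for the Riemannian distance $d$ of $M$, and then invokes the classical theorem for piecewise-regular curves \cite[Thm 6.6]{lee} to conclude $a'(t_0)=b'(t_0)$; only then does Corollary~\ref{cor:merge-geodesics} merge the two geodesics. Either this manifold-level argument, or an extension of Theorem~\ref{thm:geodesics-distance} to piecewise stacky curves (whose proof would require the same classical input), is needed to make your continuation step rigorous; the rest of your outline is sound.
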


Note that the restriction $\alpha\r{[0,r]}$ is minimizing, but $[x]$, $[y]$ do not need to be in the same stratum, for Proposition \ref{prop:stratum} demands $\alpha$ to be minimizing over the whole open interval.

\begin{proof}
Take $x,y\in M$ representatives for $[x],[y]$, and denote by 
$S^N_{\epsilon}(x)\subset N_xO$ the sphere of normal vectors at $x$ of radius $\epsilon$. By the stacky Gauss Lemma \ref{prop:gauss-lemma} we can take $\epsilon$ such that $d_N([x],[\exp(v)])=\n{v}$ for all $v\in S^N_{\epsilon}(x)$. Without loss of generality we can assume that $\epsilon<r$.
Since $S^N_{\epsilon}(x)$ is compact, there is some $v\in S^N_{\epsilon}(x)$ minimizing the normal distance to $y$, namely
$$d_{N}([\exp(v)],([y])=\inf\{d_{N}([\exp(w)],[y]): \, w\in S^N_{\epsilon}(x)\}.$$

If $z=\exp(v)$ then 
$d_N([z],[y])\geq d_N([x],[y])-d_N([x],[z])=r-\epsilon$
by the triangle inequality, and we claim that the equality holds. Suppose otherwise that $r=d_N([x][y])<d_N([z],[y])+\epsilon$, then by Theorem \ref{thm:distance-length} there is a stacky curve $\alpha:I\to[M/G]$ such that $\alpha(0)=[x]$, $\alpha(1)=[y]$ and $\ell(\alpha)<d_N([z],[y])+\epsilon$. Then if $t_\epsilon$ denotes the time at which $\alpha$ intersects the geodesic sphere $\pi(\exp(S^N_{\epsilon}(x)))\subset M/G$, which exists by connectedness of $I$, we get a contradiction as follows:
$$\ell(\alpha)\geq \ell(\alpha\r{[0,t_\epsilon]})+\ell(\alpha\r{[t_\epsilon,1]})
\geq d_N(\alpha(0),\alpha(t_\epsilon))+d_N(\alpha(t_\epsilon),\alpha(1))\geq \epsilon+d_N([z],[y])$$

Finally, we claim that the unitary stacky geodesic $\alpha$ with initial condition $\alpha(0)=[x]$ and $\alpha'(0)=[\frac{1}{\epsilon}v]$, which by hypothesis is defined for all time, satisfies $\alpha(r)=[y]$, so it minimizes the distance between $[x]$ and $[y]$. 
Let $A=\{t\in [0,r]: d_{N}(\alpha(t),[y])=d_{N}([x],[y])-t\}$ denote the times at which $\alpha$ is optimally approaching $[y]$.
Note that $\epsilon\in A\subset[0,r]$, $t\in A$ for small $t$, and $A$ is closed by continuity. Let $t_0$ be the supremum of $A$. Suppose that $t_0<r$, otherwise we are done. 
By applying the stacky Gauss lemma \ref{prop:gauss-lemma}, and reasoning as before, we know there exists $[w]\in T_{\alpha(t_0)}[M/G]$ such that the stacky geodesic $\beta$ with $\beta(t_0)=\alpha(t_0)$ and $\beta'(0)=[w]$ satisfies 
$d_{N}(\beta(t),[y])=d_N(\beta(0),[y])-t=r-t_0-t$ for small $t$.
To show that we can actually merge $\alpha$ and $\beta$ within a single stacky geodesic, we can pick a representative $z\in M$ of $\alpha(t_0)$, and working locally around $z$, represent $\alpha$ and $\beta$ as normal geodesics $a$ and $b$ in $M$ such that $a(t_0)=b(t_0)=z$. 
Then the piecewise smooth curve 
$$c(t)=\begin{cases}
        a(t) & t\leq t_0 \\
        b(t) & t\geq t_0
       \end{cases}$$
is minimizing for $d_N$, then it is also minimizing for $d$, and by classic Riemannian geometry \cite[Thm 6.6]{lee}, it must be a geodesic, from where $a'(t_0)=b'(t_0)$. This way we can merge $\alpha$ and $\beta$ as in Corollary \ref{cor:merge-geodesics}, contradicting the maximality of $t_0$.
\end{proof}


We can finally present our third main theorem, the following stacky version for classical Hopf-Rinow theorem \cite[Thm 6.13]{lee}.

\begin{theorem}\label{thm:hopf-rinow}
A separated Riemannian stack $([M/G],[\eta])$ is geodesically complete if and only if the coarse orbit space $(M/G,d_{N})$ is a complete metric space.
\end{theorem}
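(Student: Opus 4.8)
The plan is to prove both implications of this Hopf--Rinow statement, mimicking the classical manifold argument but carefully isolating where separation (properness) is used.

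\medskip

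\textbf{Forward direction (geodesic completeness $\Rightarrow$ metric completeness).}
First I would assume $([M/G],[\eta])$ is geodesically complete and show $(M/G,d_N)$ is complete. The standard manifold strategy is to prove that closed bounded sets are compact, or equivalently that Cauchy sequences converge. I would take a Cauchy sequence $([x_n])$ in $M/G$; since it is bounded, all points lie within normal distance $r$ of a fixed basepoint $[x_0]$. Using Proposition \ref{prop:geodesic-realizing}, each $[x_n]$ is joined to $[x_0]$ by a minimizing unit-speed stacky geodesic emanating from $[x_0]$, determined by an initial direction $[v_n]\in T_{[x_0]}[M/G]$ with $\n{v_n}\le r$ and a time $t_n = d_N([x_0],[x_n])\le r$. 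The coarse tangent space $T_{[x_0]}[M/G]=N_{x_0}O/G_{x_0}$ is the quotient of a finite-dimensional ball by a compact group (properness gives $G_{x_0}$ compact), hence the closed $r$-ball in it is compact; so $(t_n,[v_n])$ has a convergent subsequence, and by continuous dependence of the geodesic endpoint on $([x_0],[v],t)$ together with geodesic completeness (which guarantees $\exp$ is defined on the whole ball), the corresponding subsequence of $[x_n]$ converges. Being Cauchy, the whole sequence converges, so $(M/G,d_N)$ is complete.

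\medskip

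\textbf{Reverse direction (metric completeness $\Rightarrow$ geodesic completeness).}
Here I would assume $(M/G,d_N)$ is complete and argue that every unit-speed stacky geodesic $\alpha:(a,b)\to[M/G]$ extends. Suppose $b<\infty$; it suffices to extend past $b$. Take $t_n\nearrow b$. Since $\alpha$ has unit speed, $d_N(\alpha(t_n),\alpha(t_m))\le \m{t_n-t_m}$, so $(\alpha(t_n))$ is Cauchy and by completeness converges to some $[p]\in M/G$. I would lift to a representative $p\in M$, pass to the linearization $G_p\ltimes B^N_\epsilon(p)$ around $p$ as in Proposition \ref{prop:gauss-lemma}, and use the local existence and uniqueness of stacky geodesics (the lemma preceding Proposition \ref{prop:glob.uniq.geodesics} and Corollary \ref{cor:global-uniqueness}) to produce a geodesic through $[p]$; matching it to $\alpha$ near $b$ via Corollary \ref{cor:merge-geodesics} extends $\alpha$ beyond $b$, contradicting maximality. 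Hence every maximal geodesic is defined on all of $\R$.

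\medskip

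\textbf{Main obstacle.} I expect the delicate point to be the forward direction's compactness argument: one must verify that the map sending $([x_0],[v],t)$ to the endpoint $\alpha(t)$ of the geodesic with that initial data is continuous on the \emph{coarse} tangent space $T_{[x_0]}[M/G]$, which is a singular quotient, not a manifold. This requires knowing that the stacky exponential depends continuously on its data across strata of differing dimension, a point analogous to the subtle continuity of the normal speed handled in Proposition \ref{prop:cont-speed}; I would reduce it to continuity of the ordinary exponential on a slice $B^N_\epsilon(p)$ composed with the proper (hence closed, quotient-compatible) projection by the compact group $G_p$. The properness of $G\toto M$ is essential throughout, both for compactness of isotropy and for the linearization used in gluing.
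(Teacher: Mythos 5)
Your plan follows the same two-pronged strategy as the paper's proof, and your forward direction is essentially identical to it: the paper fixes $x$, defines the stacky exponential $\mathcal E:N_xO\to M/G$, asserts (without proof, just as you flag) that it is continuous, and uses Proposition \ref{prop:geodesic-realizing} to trap the Cauchy sequence inside the compact set $\mathcal E(B^N_R(x))$; your version with the coarse tangent space $N_{x_0}O/G_{x_0}$ is the same argument pushed down to the quotient, so there you are on equal footing with the paper. The reverse direction, however, has a gap as written. You propose to ``produce a geodesic through $[p]$'' and then match it to $\alpha$ near $b$ via Corollary \ref{cor:merge-geodesics}; but that corollary requires the two geodesics to be defined at a common time with the same position and velocity, and $\alpha$ is never defined at $[p]$, so there is no initial velocity at $[p]$ to feed into local existence and nothing to match against a priori. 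What makes the classical argument, and the paper's proof, work is a uniformity statement: there exist an open $U\ni x$ (with $x$ a representative of $[p]$) and $\epsilon>0$ such that every unit-speed geodesic of $M$ starting in $U$ exists for time at least $\epsilon$. Using openness of $\pi:M\to M/G$, the paper picks $t_{n_0}$ with $\alpha(t_{n_0})\in\pi(U)$ and $t_{max}-t_{n_0}<\epsilon/2$, lifts $(\alpha(t_{n_0}),\alpha'(t_{n_0}))$ to data $(y,v)$ with $y\in U$ and $v$ normal, and extends $\alpha$ by the classical geodesic through $(y,v)$, merging via Corollary \ref{cor:merge-geodesics}; here the matching is automatic and no limiting velocity at $[p]$ is ever needed. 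Your linearization-at-$p$ variant can be repaired in the same way (apply the uniform existence time inside the chart $B^N_\epsilon(p)$ and launch the extension from the representative of $\alpha(t_{n_0})$ rather than from the origin), but pointwise local existence and uniqueness, which is all your sketch invokes, does not rule out the existence time shrinking to $0$ as $t_{n_0}\to b$, and that is exactly the point at issue.
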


\begin{proof}
Suppose first that $(M/G,d_N)$ is a complete. 
Let $\alpha:I=(t_{min},t_{max})\to [M/G]$ be a stacky geodesic that is maximal, and suppose that $t_{max}<\infty$, the case $-\infty<t_{min}$ is analogous. After a linear reparametrization we may suppose that $\alpha$ is unital. Choose $t_n\nearrow t_{max}$ an increasing convergent sequence. From the inequality 
$$d_N(\alpha(s),\alpha(t))\leq \ell(\alpha|_{[s,t]})=\m{s-t}$$
we see that $\alpha(t_n)$ is a Cauchy sequence, and by the completeness assumption, we have $\alpha(t_n)\to [x]$ for some $x\in M$.
The geodesic flow of the manifold $M$ insures that there exist $\epsilon >0$ and an open $x\in U\subset M$ such that every unitary geodesic $c$ with $c(0)\in U$ is defined at least for time $(-\epsilon, \epsilon)$. Since $\pi:M\to M/G$ is open, we have that $\alpha(t_{n_0})\in \pi(U)$ for some large enough $n_0$, and we can further assume that $t_{max}-t_{n_0}<\frac{\epsilon}{2}$. Choosing $y\in U$ such that $\pi(y)=\alpha(t_{n_{0}})$, and choosing $v\in T_{y}M$ the normal vector representing $\alpha'(t_{n_0})$, we can merge $\alpha$ with the classic geodesic $c$ passing through $y$ with velocity $v$ as in Corollary \ref{cor:merge-geodesics}, contradicting the maximality of $\alpha$. 

For the converse, we assume that the separated Riemannian stack $([M/G],[\eta])$ is geodesically complete. Fix $x\in M$. There is a stacky exponential map
$$\mathcal E:N_xO\to M/G \qquad \exp(v)=\alpha(1)$$
where $\alpha:\R\to [M/G]$ is the stacky geodesic with initial conditions $\alpha(0)=[x]$ and $\alpha'(0)=[v]$. 
It is not hard to see that this map $\mathcal E:N_xO\to M/G$ is continuous. Finally, given a Cauchy sequence in $(M/G,d_N)$, it is a bounded set, and by Proposition \ref{prop:geodesic-realizing}, it sits inside some compact $\mathcal E(B_R^N(x))$. Then the Cauchy sequence must have a convergent subsequent, and therefore be convergent.
\end{proof}


\begin{corollary}
If the coarse orbit space $M/G$ of a separated Riemannian stack $([M/G],[\eta])$ is compact, then the stack is geodesically complete. 
\end{corollary}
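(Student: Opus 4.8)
The plan is to reduce the statement to Theorem \ref{thm:hopf-rinow}, which asserts that $([M/G],[\eta])$ is geodesically complete if and only if $(M/G,d_N)$ is a complete metric space. Thus it suffices to show that compactness of $M/G$ forces $(M/G,d_N)$ to be a complete metric space.

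First I would recall that, since $([M/G],[\eta])$ is separated, the groupoid $G\toto M$ may be taken proper, and then $d_N$ is genuinely a distance by the corollary to the stacky Gauss lemma (Proposition \ref{prop:gauss-lemma}). The key point is then to verify that the metric topology induced by $d_N$ coincides with the quotient topology on $M/G$. One inclusion is immediate: since $d_N([x],[y])\le d(x,y)$, the projection $\pi:(M,d)\to(M/G,d_N)$ is $1$-Lipschitz, hence continuous, so the metric topology is coarser than the quotient topology. For the reverse inclusion I would invoke the local description $d_N([x],[y])=d(O_x,y)$ valid in a small tubular neighborhood of $O_x$ (cf. \cite[Thm 6.1]{ppt}): this identifies small $d_N$-balls around $[x]$ with the projections of saturated tubular neighborhoods of $O_x$, which are open in the quotient topology, so the two topologies agree.

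With the topologies identified, compactness of $M/G$ in the quotient topology is exactly compactness of the metric space $(M/G,d_N)$. A compact metric space is automatically complete, since any Cauchy sequence admits a convergent subsequence and therefore converges. Hence $(M/G,d_N)$ is complete, and Theorem \ref{thm:hopf-rinow} yields that $([M/G],[\eta])$ is geodesically complete.

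The only genuinely non-formal step is the identification of the metric and quotient topologies; everything else is a soft deduction. Even this step is not new, being essentially contained in \cite{ppt}, so the main obstacle is really just citing the correct local property of $d_N$ rather than proving anything substantial.
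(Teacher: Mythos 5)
Your proposal is correct and follows essentially the same route as the paper, which states this corollary without proof as an immediate consequence of Theorem \ref{thm:hopf-rinow} via the standard fact that a compact metric space is complete. Your extra step identifying the $d_N$-topology with the quotient topology is a reasonable precaution but more than is needed: since $d_N([x],[y])\le d(x,y)$ makes the identity $(M/G,\mathrm{quotient})\to(M/G,d_N)$ continuous, quotient-compactness already forces the metric space $(M/G,d_N)$ to be compact, hence complete.
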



We end this section by showing that every separated stack admits a complete stacky metric. This result generalizes the well-known fact that any manifold admits a complete metric.

\begin{corollary}
Every separated stack $[M/G]$ admits a stacky metric $[\eta]$ such that $([M/G],[\eta])$ is geodesically complete.
\end{corollary}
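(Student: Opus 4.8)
The plan is to invoke our Hopf--Rinow theorem \ref{thm:hopf-rinow}: since $[M/G]$ is separated it is presented by a proper groupoid $G\toto M$, so it suffices to produce a groupoid metric for which the coarse orbit space $(M/G,d_N)$ is a complete metric space. I claim it is enough to arrange that the induced metric $\eta^{(0)}$ on $M$ is complete. Indeed, $\pi\colon(M,d)\to(M/G,d_N)$ is a submetry with $d_N([x],[y])\le d(x,y)$ (cf. \cite{ppt}), so given a Cauchy sequence $([x_n])$ in $M/G$ one passes to a subsequence with $d_N([x_{n_k}],[x_{n_{k+1}}])<2^{-k}$ and lifts it step by step to points $x_{n_k}\in M$ with $d(x_{n_k},x_{n_{k+1}})<2^{-k+1}$; this lift is Cauchy, hence convergent in the complete manifold $M$, and its image converges, forcing the original sequence to converge.

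To make $\eta^{(0)}$ complete I start with any groupoid metric on $G\toto M$, which exists by the averaging argument of \cite{dhf1}, and rescale it conformally. The key observation is that rescaling stays inside the class of groupoid metrics whenever the conformal factor is orbit-invariant: if $f\colon M\to\R_{>0}$ is smooth and $G$-invariant, then its common pullback $\hat f$ to $G^{(2)}$, well defined because the three vertices of a composable triangle lie in one orbit, is $S_3$-invariant, and $\hat f\,\eta^{(2)}$ is again transverse to the multiplication, since conformal rescaling by a function pulled back from the base preserves the Riemannian submersion conditions for $m,\pi_1,\pi_2\colon G^{(2)}\to G$ and $s,t\colon G\to M$. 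Thus $\hat f\,\eta^{(2)}$ is a groupoid metric inducing $f\,\eta^{(0)}$ on $M$.

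It remains to choose an invariant $f$ making $f\,\eta^{(0)}$ complete, an equivariant version of the Nomizu--Ozeki rescaling. Using an invariant partition of unity, available on the proper groupoid, I build a smooth $G$-invariant proper function $h\colon M\to\R_{\ge 0}$. Since $h$ is constant on orbits and the normal representations act by isometries, the function $\n{\nabla h}_{\eta^{(0)}}$ is itself $G$-invariant; choosing a smooth $G$-invariant $f$ with $f\ge\max(1,\n{\nabla h}_{\eta^{(0)}}^2)$ gives $\n{\nabla h}_{f\eta^{(0)}}\le 1$. Then $h$ is $1$-Lipschitz and proper for $f\,\eta^{(0)}$, so every closed metric ball is contained in a compact set of the form $h^{-1}([0,C])$ and is therefore compact; hence $(M,f\eta^{(0)})$ is complete. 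Combining the three steps and applying Theorem \ref{thm:hopf-rinow} yields the result.

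The main obstacle is the middle step: guaranteeing that the Nomizu--Ozeki conformal factor can be taken $G$-invariant and that rescaling by it keeps us inside the rather rigid class of groupoid metrics, preserving $S_3$-invariance and transversality to $m$. Both points hinge on the factor being orbit-invariant, which is why the proper-groupoid machinery, namely invariant partitions of unity and the isometric normal representations, is needed to produce the invariant proper function $h$ and to see that $\n{\nabla h}_{\eta^{(0)}}$ descends to $M/G$.
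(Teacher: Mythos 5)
Your reduction to Theorem \ref{thm:hopf-rinow} and your observation that conformal rescaling by an orbit-invariant factor stays inside the class of groupoid metrics are both sound (the paper uses exactly this rescaling mechanism). The genuine gap is in the middle step: you cannot, in general, build a smooth $G$-invariant \emph{proper} function $h:M\to\R_{\geq 0}$ on a proper groupoid. An invariant function is constant along orbits, so if it were proper every orbit, being a closed subset of a level set, would be compact; but orbits of proper groupoids are merely closed embedded and can be non-compact. Concretely, for the submersion groupoid of $\pi:M=\R^2\setminus\{0\}\to\R$ (which is proper, with trivial isotropy), the invariant functions are exactly the pullbacks from $\R$, and none of them is proper on $M$. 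Worse, your overall strategy --- arranging that $\eta^{(0)}$ itself be complete --- is impossible in general: the paper's remark immediately following this corollary points out (correcting \cite[Prop.~3.14]{ppt}) that this very groupoid admits \emph{no} groupoid metric with complete $\eta^{(0)}$, since any groupoid metric makes $\pi$ a Riemannian submersion, and a complete metric doing so would force $\pi$ to be locally trivial \cite{dh0}, which it is not. So no equivariant Nomizu--Ozeki argument can succeed; the obstruction is not technical but real.

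The paper's proof sidesteps this by working with completeness of the orbit space $(M/G,d_N)$ directly, which is all that Theorem \ref{thm:hopf-rinow} requires. It sets $R(x)=\sup\{r:\overline{B^\eta_r([x])}\subset M/G \text{ is compact}\}$, a kind of normal distance to infinity; $R$ is positive by the stacky Gauss lemma and $1$-Lipschitz for $d_N$, so one can choose a smooth invariant $f\geq 1/R$ (only continuity and invariance are needed here, no properness), and the conformally rescaled groupoid metric $\tilde f\cdot\eta$ traps every $d_N$-ball of radius $1/3$ inside a compact set $\overline{B^{\eta}_{R(x)/2}([x])}$. Thus Cauchy sequences for the new normal distance converge even though the new $\eta^{(0)}$ on $M$ may remain incomplete. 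If you want to salvage your write-up, replace your invariant proper function $h$ on $M$ by a function measuring escape to infinity in $(M/G,d_N)$, and make the ball-trapping estimate at the level of $d_N$ rather than of $d$.
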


\begin{proof}
Let $G\toto M$ be a proper groupoid let $\eta$ be any groupoid metric on it. 
For each $x\in M$, write $B^\eta_r([x])=\{[y]:d_N([x],[y])<r\}\subset M/G$. 
and let
$R(x)=\sup\{r:\overline{B^\eta_r([x])} \text{ is compact}\}$.
Intuitively, $R(x)$ measures the normal distance between $[x]$ and $\infty$.
If for some $x$ we have $R([x])=\infty$ then any bounded subset of $M/G$ sits inside a compact set, and therefore every Cauchy sequence is convergent, from where the result follows (cf. Theorem \ref{thm:hopf-rinow}). 

Suppose otherwise that $R(x)<\infty$ for all $x$. By the stacky Gauss lemma \ref{prop:gauss-lemma} we have $R(x)>0$, and from the inclusion $B^\eta_r([x])\subset B^\eta_{r+d_N([x],[y])}([y])$ it follows that $\m{R(x)-R(y)}\leq d_N([x],[y])$.
Then $R:M\to \R$ is continuous, we can construct a smooth function $f:M\to\R$ such that $f(x)\geq \frac{1}{R(x)}$ for all $x$, and by an averaging argument, we can make $f$ to be constant along the orbits (cf. \cite{dhf1}). 
We will use $f$ as a conformal factor to build a new metric.

Consider the metric $\tilde f\cdot\eta$ on $G^{(2)}$, where $\tilde f$ is the pullback of $f$ via the structure maps. 
It is easy to see that $\tilde f\cdot\eta$ is a groupoid metric, for $\eta$ is so, and $\tilde f$ is constant along the orbits and invariant under the action $S_3\action G^{(2)}$. If $\alpha:I\to[M/G]$ is a stacky curve, 
$\alpha(t_0)=[x]$, then
$$
\n{\alpha'(t)}_{\tilde f\eta}=
[f](\alpha(t))\n{\alpha'(t)}_{\eta}\geq
\frac{\n{\alpha'(t)}_{\eta}}{[R](\alpha(t))}\geq
\frac{\n{\alpha'(t)}_{\eta}}{R(x)+d_N([x],\alpha(t))}\geq
\frac{\n{\alpha'(t)}_{\eta}}{R(x)+\ell_{\eta}(\alpha)}
$$
and therefore $\ell_{\tilde f\eta}(\alpha)\geq\frac{\ell_{\eta}(\alpha)}{R(x)+\ell_{\eta}(\alpha)}$. By Theorem \ref{thm:distance-length} it follows that
$B^{\tilde f\eta}_{1/3}([x])$ is contained in $\o{B^{\eta}_{R(x)/2}([x])}$, so every Cauchy sequence for $\tilde f\eta$ is contained in a compact set and the result follows.
\end{proof}


\begin{remark}
We remark that, even though every separated stack admits a complete metric, not every proper groupoid $G\toto M$ admits a groupoid metric $\eta$ such that $\eta^{(0)}$ is complete. In fact, there may not exists a transversely invariant complete Riemannian metric $\eta^M$ on $M$, contrary to what was claimed in \cite[Prop. 3.14]{ppt}. For a counterexample, consider the submersion groupoid arising from the first projection $\pi:M=\R^2\setminus\{0\}\to\R$ (cf. Example \ref{ex:submersion1}). Then $\eta^M$ is transversely invariant if and only if $\pi$ becomes a Riemannian submersion (cf. Example \ref{ex:submersion3}), and such an $\eta^M$ cannot be complete because $\pi$ is not locally trivial \cite[Thm 5]{dh0}. In the forthcoming paper \cite{dhdm} we will generalize that result by relating complete groupoid metrics and strict linearization.
\end{remark}

{\small

}

\bigskip
 
\sf{\noindent 
Matias del Hoyo\\
Universidade Federal Fluminense (UFF),\\
Rua Professor Marcos Waldemar de Freitas Reis, s/n
Niteroi, 24.210-201 RJ, Brazil.
\\
mldelhoyo@id.uff.br}

\

\sf{\noindent 
Mateus de Melo\\
Universidade Federal de S{\~a}o Carlos (UFSCar),  \\
Rod. Washington Lu{\'i}s, Km 235, S{\~a}o Carlos, 13.565-905 SP, Brazil.\\
melomm@impa.br}

\end{document}